\newtheorem{theorem}{Theorem}[section]
\newtheorem{lemma}[theorem]{Lemma}
\newtheorem{proposition}[theorem]{Proposition}
\newtheorem{corollary}[theorem]{Corollary}
\theoremstyle{definition}
\newtheorem{definition}[theorem]{Definition}
\newtheorem{remark}[theorem]{Remark}
\newtheorem{example}[theorem]{Example}
\newtheorem{question}[theorem]{Question}
\newcommand{\C}{{\ensuremath{\mathbb{C}}}}
\newcommand{\R}{{\ensuremath{\mathbb{R}}}}
\newcommand{\Z}{{\ensuremath{\mathbb{Z}}}}
\newcommand{\Q}{{\ensuremath{\mathbb{Q}}}}
\newcommand{\HFhat}{\widehat{HF}}
\newcommand{\Isharp}{I^{\#}}
\newcommand{\maxtb}{\overline{tb}}
\newcommand{\maxsl}{\overline{sl}}
\subjclass[2013]{}
\author[Tye Lidman]{Tye Lidman}
\thanks{The first author was partially supported by NSF grant DMS-1709702 and a Sloan Fellowship.}
\address{Department of Mathematics, North Carolina State University, Raleigh, NC 27607}
\email{tlid@math.ncsu.edu}
\author[Juanita Pinz\'on-Caicedo]{Juanita Pinz\'on-Caicedo}
\thanks{The second author was partially supported by NSF grant DMS-1664567.}
\address {Department of Mathematics, University of Notre Dame, Notre Dame, IN 46556}
\email{jpinzonc@nd.edu}
\author[Christopher Scaduto]{Christopher Scaduto}
\thanks{}
\address{Department of Mathematics, University of Miami, Coral Gables, FL 33146}
\email{c.scaduto@math.miami.edu}
\numberwithin{equation}{section}
\title[Framed instanton homology of surgeries on L-space knots]{Framed instanton homology of\\ surgeries on L-space knots}
\begin{document}
\begin{abstract}
An important class of three-manifolds are L-spaces, which are rational homology spheres with the smallest possible Floer homology.  For knots with an instanton L-space surgery, we compute the framed instanton Floer homology of all integral surgeries.  As a consequence, if a knot has a Heegaard Floer and instanton Floer L-space surgery, then the theories agree for all integral surgeries.  In order to prove the main result, we prove that the Baldwin-Sivek contact invariant in framed instanton Floer homology is homogeneous with respect to the absolute $\Z/2$-grading, but not the $\Z/4$-grading.
\end{abstract}
\maketitle

%-------------------------------------------------------------------------------------------------
%-------------------------------------------------------------------------------------------------
\section{Introduction}
Over the past 30 years, beginning with the pioneering work of Floer, several variants of Floer homology for three-manifolds have been constructed having a powerful impact on topology.  In particular, Floer homology has been very useful for questions in Dehn surgery, including solving the Property P \cite{km-propertyp} and the lens space realization conjectures \cite{greene}, using instanton and Heegaard Floer homology respectively.  While many technical tools have been developed for studying these invariants, one major gap still seems to exist in the theory, namely connecting instanton Floer homology with other isomorphic Floer homologies: monopole Floer homology, Heegaard Floer homology, and embedded contact homology.  These latter three are all known to be isomorphic by work of Kutluhan--Lee--Taubes \cite{klt1, klt2, klt3, klt4, klt5}, Taubes \cite{taubes-1, taubes-2, taubes-3, taubes-4, taubes-5} and Colin--Ghiggini--Honda \cite{colin-ghiggini-honda}; additionally, these three each have a very distinct nature that can be combined in a powerful way, such as in Taubes's proof of the Weinstein conjecture \cite{taubes-weinstein}.  However, instanton Floer homology is the one invariant which is rooted in the fundamental group, and thus its connection to the other theories is crucial.  It is important to note that much of the power of all four of these theories comes, sometimes indirectly, from a connection with contact topology.  

In the current article, we relate framed instanton Floer homology and Heegaard Floer hat homology for surgeries on certain knots in $S^3$, and in particular give an explicit formula for the $\Z/2$-graded instanton Floer homology of integral surgery along an instanton L-space knot.  Along the way, we put an absolute $\Z/2$-grading on the Baldwin--Sivek contact invariant in instanton homology (see \cite{baldwin-sivek}).    

For background, we will say that a 3--manifold $Y$ with the same rational homology as $S^3$ is a {\em Heegaard Floer L-space} if $\dim \HFhat(Y;\mathbb{Q}) = |H_1(Y;\Z)|$.\footnote{Experts may notice that this varies slightly from certain conventions for Heegaard Floer L-space. As there is no known torsion in the Heegaard Floer homology of rational homology spheres, all possible definitions of L-space may agree.}  Additionally, a knot $K$ in $S^3$ is a {\em Heegaard Floer L-space knot} if there exists a positive surgery slope $r\in\Q$ such that $S^3_r(K)$, the result of $r$-surgery on $S^3$ along $K$, is a Heegaard Floer L-space. Examples of these (after possibly mirroring) are torus knots, Berge knots, and $(1,1)$-knots with coherent diagrams \cite{greene-lewallen-vafaee}.  It is well-known that if $K$ is a non-trivial Heegaard Floer L-space knot, then $S^3_r(K)$ is a Heegaard Floer L-space if and only if $r \geq 2g(K) - 1$. Here, and throughout, $g(K)$ denotes the Seifert genus.  Heegaard Floer L-space knots also are known to be strongly quasipositive \cite{hedden} and fibered \cite{ghiggini, ni}, and hence maximize the transverse Bennequin inequality, that is  $\maxsl(K) = 2g(K) - 1$, where $\maxsl$ denotes the maximal self-linking number among all transverse representatives.  Note that for such knots the Seifert genus agrees with the slice genus.

In analogy, a rational homology sphere is an {\em instanton L-space} if the framed instanton homology $I^\#(Y;\mathbb{C})$ satisfies $\dim \Isharp(Y;\mathbb{C}) = |H_1(Y;\mathbb{Z})|$. Many similar results have been established for {\em instanton L-space knots}, including being fibered and strongly quasipositive \cite{baldwin-sivek-L-space}.  It is not known if these two notions of L-spaces agree, but they do line up on many examples, such as lens spaces.  To keep notation simple, all Floer homologies will be computed with $\C$-coefficients (even Heegaard Floer) and singular homology will be assumed to have $\mathbb{Z}$-coefficients.  Unless specified otherwise, instanton Floer homology refers to $\Isharp$ and Heegaard Floer homology refers to $\HFhat$.    

Our first result is a formula for the instanton Floer homology of integral surgery on instanton L-space knots.  
\begin{theorem}\label{dim-Isharp}
Let $K$ be a non-trivial knot in $S^3$ with a positive instanton L-space surgery.  Then, setting $g=g(K)$, as absolutely $\Z/2$-graded $\C$-vector spaces, we have 
\begin{equation}\label{eq:dim-Isharp}
\Isharp(S^3_n(K)) \cong \begin{cases} \C^{2g -1 - n}_{(0)} \oplus \C^{2g - 1}_{(1)} &  n \leq 0 \\
\C^{2g -1}_{(0)} \oplus \C^{2g - 1 - n}_{(1)} &   0 \leq n \leq 2g-1 \\
\C_{(0)}^n & n \geq 2g - 1.
\end{cases}
\end{equation}
\end{theorem}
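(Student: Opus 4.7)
My plan is to compute $\Isharp(S^3_n(K))$ by induction on the surgery slope, using the integer surgery exact triangle
\[
\Isharp(S^3_{n+1}(K)) \longrightarrow \Isharp(S^3) \longrightarrow \Isharp(S^3_n(K)) \longrightarrow \Isharp(S^3_{n+1}(K)),
\]
starting from the L-space surgery slopes and working outward. As a first step, I would establish that $S^3_n(K)$ is an instanton L-space for every $n \geq 2g-1$, so that $\dim \Isharp(S^3_n(K)) = n$, concentrated in $\Z/2$-grading $(0)$. This should follow from the existence of one positive L-space slope, together with the surgery triangle and dimension constraints, paralleling the Heegaard Floer argument as adapted by Baldwin--Sivek. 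The concentration in the even grading follows because the L-space condition forces the homology into a single $\Z/2$-grading, and the Casson invariant identity $\chi(\Isharp) = \pm 2\lambda$ for rational homology spheres pins down which grading.

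For $n \leq 2g-2$, the exact triangle forces $\dim \Isharp(S^3_n(K)) = \dim \Isharp(S^3_{n+1}(K)) \pm 1$, with the sign determined by whether the cobordism map $F_n \colon \Isharp(S^3) \to \Isharp(S^3_n(K))$ is nonzero. To obtain $\dim \Isharp(S^3_n(K)) = 4g-2-n$, I need $F_n \neq 0$ for every $n \leq 2g-2$. This is where the contact invariant enters: since instanton L-space knots are fibered and strongly quasipositive by Baldwin--Sivek, $K$ admits a Legendrian representative with $\maxtb(K) = 2g-1$, and negative stabilization followed by Legendrian surgery equips $S^3_n(K)$ with a Stein-fillable contact structure $\xi_n$ for every $n \leq 2g-2$. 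Up to sign, $F_n(1)$ equals the Baldwin--Sivek contact invariant of $\xi_n$, which is nonzero by their nonvanishing theorem for Stein-fillable contact structures. Iterating from the base case at $n = 2g-1$ yields the asserted ungraded dimension $4g-2-n$ for all $n \leq 2g-1$.

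To upgrade this to a $\Z/2$-graded statement, I would invoke the main auxiliary result of the paper, that the Baldwin--Sivek contact invariant is homogeneous with respect to the absolute $\Z/2$-grading. Combined with a direct calculation of the $\Z/2$-grading shift induced by the $2$-handle cobordism $W_n \colon S^3 \to S^3_n(K)$, this determines the grading of the new generator $F_n(1)$ contributed at each step. The shift depends on the sign of $n$ through $b_2^+$ of the cobordism, which is precisely why the final formula is asymmetric across $n = 0$: the new generator lands in grading $(1)$ for $0 \leq n \leq 2g-2$ and in grading $(0)$ for $n < 0$.

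The main obstacle is the $\Z/2$-grading analysis. The surgery triangle together with contact nonvanishing pins down only the ungraded dimensions, and the required homogeneity of the contact invariant is itself a nontrivial theorem of the paper used here as a black box. Carefully tracking the grading flip across $n = 0$, and reconciling the cobordism's grading shift with the grading of the contact invariant, is the most delicate step, and it is this grading bookkeeping rather than any new Floer-theoretic input that ought to dominate the proof.
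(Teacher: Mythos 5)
Your overall scheme (descending induction on $n$ via the surgery triangle, with the connecting map $F_n\colon \Isharp(S^3)\to \Isharp(S^3_n(K))$ forced to be nonzero by contact geometry) is a genuinely different mechanism from the paper's, which instead produces, for a single very negative surgery $S^3_{-m}(K)$ with $m\gg 0$, many Stein structures on the trace with distinct $c_1$ and invokes the Baldwin--Sivek linear-independence theorem, then propagates bounds through the triangle. But your version has a genuine gap at its crucial step: to make the $n$-framed $2$-handle cobordism Stein for every $n\le 2g-2$ you need a Legendrian representative of $K$ with $tb=n+1$, and in particular (at $n=2g-2$) you need $\maxtb(K)=2g-1$. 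Fiberedness and strong quasipositivity only give the \emph{transverse} statement $\maxsl(K)=2g-1$ (the binding of the supporting open book realizes $sl=2g-1$); since $\maxsl$ is the maximum of $tb-r$ over Legendrian representatives, this does not produce a Legendrian with $tb=2g-1$, and $\maxtb=2g-1$ is not known for instanton L-space knots. The paper is structured precisely to avoid this: its lower bound (Proposition \ref{lower-bound}) starts from a Legendrian realizing $\maxsl$ and stabilizes down to the very negative framings $-n+1$, where the required $tb$ values are always available, and the acknowledgements record exactly this improvement ``from $\maxtb$ to $\maxsl$.'' Without $\maxtb(K)=2g-1$, your induction has no argument that $F_n\neq 0$ for $n$ near $2g-2$, and the claimed dimensions do not follow.

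Two further points need repair even where Stein cobordisms exist. First, ``$F_n(1)$ equals $\Theta^\#(\xi_n)$'' does not typecheck: the contact invariant lives in $\Isharp(-S^3_n(K))$, not $\Isharp(S^3_n(K))$. The usable statement is that the turned-around cobordism map $\Isharp(-S^3_n(K))\to \Isharp(-S^3)$ carries $\Theta^\#(\xi_n)$ to $\Theta^\#(\xi_{\mathrm{std}})\neq 0$, whence $F_n\neq 0$ by duality; this requires a Legendrian-surgery functoriality theorem for the instanton contact invariant \emph{for the maps appearing in Floer's exact triangle}, including matching the $SO(3)$-bundle data $\nu$ (one cobordism in each triangle is non-spin, and $I^\#(W)_\nu$ depends on $[\nu]$), none of which is supplied by the nonvanishing theorem for Stein fillable structures alone. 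Second, the Euler-characteristic input you cite is wrong: for $\Isharp$ of a rational homology sphere one has $|\chi(\Isharp(Y))|=|H_1(Y;\Z)|$ (Scaduto), not $\chi=\pm 2\lambda$, which pertains to a different flavor of instanton homology; the paper uses the former both to place the L-space groups in grading $0$ and to convert degree-$0$ lower bounds into degree-$1$ ones. Finally, note that in your scheme the $\Z/2$-homogeneity of $\Theta^\#$ would actually be superfluous (the grading of $F_n(1)$ is already fixed by the mod $2$ degree of $F_n$), whereas in the paper's argument that homogeneity theorem is essential, since its linearly independent contact classes are not exhibited as images of a map from $\Isharp(S^3)$.
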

A good example of a knot satisfying the conditions of \Cref{dim-Isharp} is a positive torus knot, since a lens space is an instanton L-space and positive torus knots maximize the transverse Bennequin inequality.  The instanton Floer homology groups of most surgeries on torus knots were previously unknown.  

%\begin{remark}Recent work of Baldwin and Sivek \cite{baldwin-sivek-L-space} shows that a knot $K$ in $S^3$ with a positive instanton L-space surgery is fibered and strongly quasipositive, and therefore immediately satisfies $\maxsl(K) = 2g(K) - 1$.
%\end{remark}

Since \Cref{dim-Isharp} is also true for Heegaard Floer L-space knots, even with $\C$-coefficients, we have the following corollary: 

\begin{corollary}\label{IsharpisHFhat}
Let $K$ be a knot in $S^3$ which is both an instanton and Heegaard Floer L-space knot.  Then, for all $n \in \mathbb{Z}$, there is an isomorphism of $\Z/2$-graded $\C$-vector spaces:
\[
\HFhat(S^3_n(K)) \cong \Isharp(S^3_n(K))
\]
Furthermore, the isomorphism type is given by \eqref{eq:dim-Isharp}.
\end{corollary}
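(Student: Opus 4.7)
The plan is to show that the right-hand side of \eqref{eq:dim-Isharp} also computes $\widehat{HF}(S^3_n(K))$ as a $\Z/2$-graded $\C$-vector space whenever $K$ is a Heegaard Floer L-space knot. Combined with \Cref{dim-Isharp} applied to $K$ on the instanton side, this immediately yields the desired isomorphism.

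For the Heegaard Floer computation, I would split into cases. When $n \geq 2g-1$, the manifold $S^3_n(K)$ is a Heegaard Floer L-space by the standard Ozsv\'ath--Szab\'o result for L-space knots, so $\dim \widehat{HF}(S^3_n(K)) = n$ concentrated in the even $\Z/2$-grading, which agrees with the third case of \eqref{eq:dim-Isharp}. For $n < 2g-1$, I would apply the Ozsv\'ath--Szab\'o mapping cone formula for $\widehat{HF}$ of integer surgeries, using the fact that the knot Floer complex $CFK^\infty(K)$ of a Heegaard Floer L-space knot is a staircase determined by the Alexander polynomial. A direct computation from this staircase yields total rank $4g-2-n$, which can equally well be carried out with $\C$-coefficients.

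To extract the $\Z/2$-graded decomposition, I would use the Euler characteristic identity $\chi(\widehat{HF}(Y)) = |H_1(Y;\Z)|$ for any rational homology sphere $Y$. For $n \neq 0$, combining this identity with the total rank uniquely determines the ranks in each grading, matching the first two cases of \eqref{eq:dim-Isharp}. For $n = 0$, where $S^3_0(K)$ is not a rational homology sphere, I would instead rely on the conjugation symmetry of $CFK^\infty$ (or on the explicit structure of $\widehat{HF}(S^3_0(K))$ for L-space knots established in the Ozsv\'ath--Szab\'o genus-detection work) to confirm that both $\Z/2$-graded pieces have rank $2g-1$.

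The main obstacle is the careful tracking of the absolute $\Z/2$-grading through the mapping cone formula, since the corollary's isomorphism is stated with respect to the absolute $\Z/2$-gradings on both theories. However, the necessary grading data is standard in the Heegaard Floer literature and should require no techniques beyond those already used to prove \Cref{dim-Isharp}.
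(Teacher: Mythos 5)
Your proposal is correct and is essentially the paper's argument: the corollary is obtained by noting that the right-hand side of \eqref{eq:dim-Isharp} also computes $\HFhat(S^3_n(K))$ for Heegaard Floer L-space knots over $\C$, and combining this with \Cref{dim-Isharp}. The paper simply delegates the Heegaard Floer computation to the known Ozsv\'ath--Szab\'o lens-space/integer-surgery results (see the remark following \Cref{lower-bound}) rather than rederiving it from the mapping cone formula, but the structure of the argument is the same as yours.
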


In the case that an additional grading constraint holds on one instanton L-space surgery on $K$, such as in the case of a knot with a positive lens space surgery, then we can in fact completely compute $I^\#(S^3_n(K))$ as a $\Z/4$-graded group.  The precise statement can be found in \Cref{Z4-grading}.  While our arguments do not seem to apply more generally for rational surgery, they can be used to recover $\Isharp$ for $1/n$-surgeries on the right-handed trefoil with $n > 0$ in \Cref{sec:trefoil}.   

A rough sketch of the proof of \Cref{dim-Isharp} is as follows.  For $m \gg 0$, $Y = S^3_{-m}(K)$ admits several Stein fillings with distinct canonical classes.  By work of Baldwin--Sivek \cite{baldwin-sivek-su2}, the instanton contact invariants for the associated contact structures on $Y$ in $\Isharp(-Y)$ are linearly independent, and this gives a lower bound on $\Isharp(-Y)$ and hence $\Isharp(Y)$, as well as all other surgeries by the surgery exact triangle.  An upper bound can be obtained by a standard application of the surgery exact triangle together with the existence of an instanton L-space surgery.  In order to obtain the desired lower bound, it is necessary to exhibit an absolute $\Z/2$-grading on the Baldwin--Sivek contact invariant:  

\begin{theorem}\label{contact-element}
	Suppose $\xi$ is a contact structure on a closed oriented 3-manifold $Y$. Then the Baldwin--Sivek contact invariant $\Theta^\#(\xi)\in I^\#(-Y)$ is homogeneous with respect to the absolute $\Z/2$-grading.  Furthermore, this grading can be computed explicitly in terms of the algebraic topology of an almost-complex four-manifold bounding $(Y,\xi)$.    
\end{theorem}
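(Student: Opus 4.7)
The plan is to realize $\Theta^\#(\xi)$ as the image of a homogeneous class under a cobordism-induced map on $I^\#$ coming from an almost-complex $4$-manifold, and then invoke the cobordism shift formula for the absolute $\Z/2$-grading.

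As a first step, I would use that every closed oriented $3$-manifold $Y$ bounds a compact oriented $4$-manifold, and that standard obstruction theory lets one upgrade this to an almost-complex $4$-manifold $(X,J)$ whose almost-complex structure restricts on $\partial X=Y$ to a compatible complex structure on $\xi\subset TY$. Drilling out a small open ball produces a cobordism $W:S^3\to Y$ carrying the same almost-complex data, and in particular a first Chern class $c_1(J)\in H^2(W;\Z)$.

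The next step is to identify $\Theta^\#(\xi)$ with a cobordism image. When $(Y,\xi)$ is Stein-fillable with filling $(X,J)$, Baldwin--Sivek show that $\Theta^\#(\xi)\in I^\#(-Y)$ is, up to sign, the image of the generator $1\in I^\#(S^3)\cong\C$ under the cobordism map $I^\#(\overline{W})$. To handle a general $(Y,\xi)$, I would use the Baldwin--Sivek contact handle decomposition: any contact $3$-manifold is built from $(S^3,\xi_{std})$ by attaching contact $0$-, $1$-, and $2$-handles, and each handle attachment induces a cobordism map under which $\Theta^\#$ is natural. The composite cobordism can then be organized into an almost-complex $4$-manifold with boundary $Y$. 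Since $\Theta^\#(\xi_{std})\in I^\#(-S^3)=\C$ is a generator and hence homogeneous, homogeneity of $\Theta^\#(\xi)$ reduces to homogeneity of the grading shift of the induced cobordism map.

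With that identification in hand, I would apply Scaduto's absolute $\Z/2$-grading on $I^\#$: the map induced by a $4$-dimensional cobordism shifts the grading by a quantity determined modulo $2$ by the characteristic numbers $\chi(W)$, $\sigma(W)$, and $c_1(J)^2$ together with the parity of $b_1(W)$. Applied to the cobordism built above, this yields a homogeneous grading for $\Theta^\#(\xi)$ that is explicitly computable from the algebraic topology of $(X,J)$.

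The main obstacle I expect is proving independence of the choice of almost-complex filling. Given two fillings $(X_1,J_1)$ and $(X_2,J_2)$ of $(Y,\xi)$, the glued manifold $X_1\cup_Y \overline{X_2}$ is a closed almost-complex $4$-manifold for which the classical identity $c_1^2=2\chi+3\sigma$ holds. Combining this constraint with the additivity of $\chi$, $\sigma$, and $c_1^2$ across the gluing should force the two candidate grading formulas, computed from $(X_1,J_1)$ and $(X_2,J_2)$ respectively, to agree modulo $2$. Verifying this parity identity, and handling the orientation-reversal bookkeeping inherent in moving between $I^\#(Y)$ and $I^\#(-Y)$, is the delicate part of the argument.
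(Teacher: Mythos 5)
Your overall strategy -- exhibit $\Theta^\#(\xi)$ as the image of a homogeneous class under a cobordism-induced map, apply the mod 2 degree formula for cobordism maps, and prove the resulting formula is independent of the almost-complex filling by gluing two fillings and using $c_1^2=2\chi+3\sigma$ together with the fact that $c_1$ is characteristic -- is the right shape, and your last paragraph is exactly the paper's \Cref{delta-inv}. (Note, though, that the mod 2 shift of a cobordism map does not involve $c_1(J)^2$ or $b_1(W)$; it is determined by $\chi(W)+\sigma(W)$ and the first Betti numbers of the ends.) The problem is the central identification step. First, the Stein-fillable statement you cite is backwards: what Baldwin--Sivek (and the Heegaard Floer analogues) prove is that the map induced by the reversed filling/Legendrian-surgery cobordism sends $\Theta^\#(\xi)$ \emph{to} the generator of $I^\#(-S^3)$, not that it sends the generator to $\Theta^\#(\xi)$. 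Being a preimage of a homogeneous element under a homogeneous map says nothing about homogeneity, so even in the fillable case your reduction does not go through as written.

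Second, and more seriously, for general $\xi$ the assertion that the composite of contact $0$-, $1$-, $2$-handle cobordisms from $S^3$ to $Y$ ``can be organized into an almost-complex $4$-manifold with boundary $Y$'' is unjustified, and it is precisely the hard point. A contact $2$-handle attachment amounts to contact $(+1)$-surgery on a Legendrian curve, and the associated $4$-dimensional $2$-handle cobordism is \emph{not} Stein/Weinstein in that direction; it is the reversed cobordism (the dual handle decomposition, as in Ding--Geiges) that carries a Weinstein, hence almost-complex, structure. This is exactly what the paper must confront: the contact class is realized as the relative invariant of $X=Z\cup(-V)$, where $Z$ is Baldwin--Sivek's Lefschetz fibration bounding the mapping torus, and \Cref{lemma:enemy} shows that $-V$ admits an almost-complex structure whose induced plane field on $-(Y\# R\times S^1)$ is $\xi\#\xi_R$ -- an identification that requires the open book definition, product disk decompositions, and results of Giroux, Torisu, and Honda--Kazez--Mati\'c, plus the grading statements for the closure pieces (\Cref{lemma:surfacesupport}). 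Without an argument of this kind you cannot relate the degree of whatever cobordism map you use to $\delta(Y,\xi)$ computed from an \emph{arbitrary} almost-complex filling, so the ``explicit formula'' half of the theorem is out of reach. (It is worth noting that bare homogeneity is comparatively cheap: the contact class is by construction the image of a generator of a $1$-dimensional, hence homogeneous, Floer group under a homogeneous map; the substance of \Cref{contact-element} is the computation of the grading, which your proposal does not yet reach.)
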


A corollary of the grading computation of \Cref{contact-element} is that the mod 2 grading of the instanton contact invariant agrees with that of the Heegaard Floer contact invariant for rational homology 3-spheres.

\begin{corollary}\label{contact-gradings-agree}
Let $\xi$ be a contact structure on a rational homology 3-sphere $Y$.  Then the absolute $\Z/2$-gradings of $c(\xi) \in \HFhat(-Y)$ and $\Theta^\#(\xi) \in \Isharp(-Y)$ agree.  
\end{corollary}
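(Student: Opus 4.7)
The plan is a direct comparison of two explicit topological formulas, one for each contact invariant. By Theorem \ref{contact-element}, the $\Z/2$-grading of $\Theta^\#(\xi) \in \Isharp(-Y)$ is computable from the data of an almost-complex four-manifold $(X,J)$ filling $(Y,\xi)$, in terms of standard invariants such as $c_1(J)^2$, $\chi(X)$, and $\sigma(X)$. On the Heegaard Floer side, for a rational homology sphere $Y$ there is a classical formula going back to Ozsv\'ath--Szab\'o: $c(\xi) \in \HFhat(-Y,\mathfrak{s}_\xi)$ is supported in absolute $\Q$-grading $-d_3(\xi)$, where
\[
d_3(\xi) = \tfrac{1}{4}\bigl(c_1(J)^2 - 2\chi(X) - 3\sigma(X)\bigr)
\]
is the Gompf invariant, computed from the same almost-complex filling. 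Since $(Y, \xi)$ is a rational homology sphere, $d_3(\xi)$ is well-defined in $\Q$, and reducing mod $2$ gives a well-defined $\Z/2$-grading for $c(\xi)$.

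First I would extract from Theorem \ref{contact-element} the precise formula for the mod-$2$ grading of $\Theta^\#(\xi)$ in terms of $(X,J)$. Then I would reduce the expression $-d_3(\xi)$ mod $2$ and compare the two. Since both gradings are topological invariants of $(Y,\xi)$, depend only on the standard characteristic numbers of an almost-complex filling, and agree mod $2$ with an expression linear in $c_1(J)^2$, $\chi(X)$, $\sigma(X)$, the comparison reduces to checking they agree as functions of these quantities up to at most a universal constant mod $2$. To pin down any such constant, I would evaluate both invariants on a standard reference, e.g.\ the tight contact structure on $S^3$ filled by the standard $B^4$, where both $c(\xi_{\mathrm{std}})$ and $\Theta^\#(\xi_{\mathrm{std}})$ are known explicitly and lie in a specific grading; this single evaluation fixes the normalization and forces agreement in general.

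The main obstacle is the careful bookkeeping of conventions: the orientation $-Y$ versus $Y$, the normalization of absolute gradings in the two theories, and the exact form of the instanton grading formula supplied by Theorem \ref{contact-element}. Sign and offset ambiguities are where essentially all of the work of the comparison concentrates, but once the formula from Theorem \ref{contact-element} is written in a form parallel to $-d_3(\xi)$, checking the mod-$2$ equality is immediate.
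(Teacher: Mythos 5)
There is a genuine gap, and it sits exactly where the real work of this corollary lies. You assert that $c(\xi)$ lives in $\Q$-grading $-d_3(\xi)$ and that ``reducing mod $2$'' gives its absolute $\Z/2$-grading. First, the correct rational grading is $-d_3(\xi)-\tfrac12$; more importantly, for a general rational homology sphere this is a non-integral rational number (the gradings in $\HFhat(-Y,\mathfrak{t}_\xi)$ fill out a coset $c+\Z\subset\Q$ depending on $(Y,\mathfrak{t}_\xi)$), so ``reduce mod $2$'' is not even well-defined, and the absolute $\Z/2$-grading on $\HFhat$ is \emph{not} the reduction of the $\Q$-grading: the two differ by a correction depending on the Spin$^c$ structure, measured by $\rho(\mathfrak{t}_\xi)=\frac14\left(c_1(\mathfrak{s})^2-\sigma(X)\right)\pmod 2$. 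Second, your plan to compare the two formulas ``up to a universal constant'' and normalize on $(S^3,\xi_{\mathrm{std}})$ assumes precisely what must be proved. The instanton grading supplied by \Cref{contact-element} (via \Cref{contact-grading}) is $\delta(-Y,\xi)+1$ with $\delta(Y,\xi)=\frac12(\chi(X)+\sigma(X)+b_1(Y)-1)$, which contains no $c_1^2$ term, whereas $d_3$ does; their difference is $\delta(Y,\xi)+d_3(\xi)\equiv\rho(\mathfrak{t}_\xi)+\frac12(b_1(Y)-1)\pmod 2$, and $\rho(\mathfrak{t}_\xi)$ genuinely varies with $(Y,\mathfrak{t}_\xi)$ (e.g.\ over Spin$^c$ structures on lens spaces), so the discrepancy is not a constant that a single evaluation on $S^3$ can fix.

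The paper's argument threads exactly this needle: it records the congruence $\delta(Y,\xi)\equiv\rho(\mathfrak{t}_\xi)-d_3(\xi)+\frac12(b_1(Y)-1)\pmod 2$ and combines it with \Cref{contact-grading} and the corresponding fact on the Heegaard Floer side, namely that for a rational homology sphere the absolute $\Z/2$-grading of a class of rational degree $\tilde d$ in $\HFhat(-Y,\mathfrak{t})$ differs from $\tilde d$ by the same $\rho$-type correction; the $\rho(\mathfrak{t}_\xi)$ terms then cancel, leaving the topological expression that matches $\delta(-Y,\xi)+1$. To repair your proposal you must introduce this bridge between the $\Q$-grading and the absolute $\Z/2$-grading in $\HFhat$ (equivalently, compare the mod $2$ degree-shift formula for Spin$^c$ cobordism maps with the rational degree-shift formula); without the $\rho$ term on both sides the comparison cannot close.
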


If $c_1(\xi)$ is torsion, then $c(\xi)$ inherits an absolute {\em rational} grading, related to the Gompf invariant $d_3(\xi)$.  While there is no rational grading currently defined on instanton Floer homology, there is an absolute $\Z/4$-grading.  Unfortunately, the contact invariant is not homogeneous with respect to this grading.

\begin{proposition}\label{contact-gradings-Z4}
Let $Y = \Sigma(2,3,7)$ oriented as the boundary of a negative-definite plumbing.  Then, there exists a Stein fillable contact structure $\xi$ on $Y$ such that $\Theta^\#(\xi)$ is not homogeneous with respect to the absolute $\Z/4$-grading.  
\end{proposition}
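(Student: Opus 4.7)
The plan is to produce a Stein fillable contact structure $\xi$ on $Y=\Sigma(2,3,7)$ together with two Stein fillings $(W_1,J_1)$ and $(W_2,J_2)$ of $(Y,\xi)$ whose induced cobordism maps in framed instanton Floer homology shift the absolute $\Z/4$-grading by amounts $d_1$ and $d_2$ that agree modulo $2$ (as they must by \Cref{contact-element}) but differ by $2$ modulo $4$. By Baldwin--Sivek's nonvanishing result for Stein fillable contact structures, each image of the canonical generator $1\in\Isharp(S^3)\cong\C$ under the cobordism map $\Isharp(W_i\setminus B^4)$ is a nonzero scalar multiple of $\Theta^\#(\xi)$. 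Since these two images lie in distinct $\Z/4$-graded components of $\Isharp(-Y)$, each must contribute a nonzero $\Z/4$-homogeneous piece of $\Theta^\#(\xi)$, and so $\Theta^\#(\xi)$ is not homogeneous with respect to the absolute $\Z/4$-grading.

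Carrying this out begins by recording the $\Z/4$-grading shift of an instanton cobordism map $\Isharp(W\setminus B^4)$ equipped with a compatible almost-complex structure as an explicit function of $\chi(W)$, $\sigma(W)$, and $c_1(W,J)^2$; this follows from the virtual-dimension formula for anti-self-dual moduli spaces, and its mod-$2$ reduction is precisely the grading appearing in \Cref{contact-element}. I would then exhibit the two Stein fillings of a common contact structure on $\Sigma(2,3,7)$. The manifold is well-suited for this: it admits Stein handlebody descriptions both as surgery on the right-handed trefoil (using different Legendrian representatives with varying rotation numbers, hence differing $c_1$) and as the boundary of a negative-definite plumbing resolving the singularity $x^2+y^3+z^7$. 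These constructions provide candidate pairs of Stein fillings with distinct topological invariants. With the fillings in hand, I would compute the topological invariants, read off $d_1$ and $d_2$, and verify $d_1-d_2\equiv 2\pmod 4$.

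The main obstacle is confirming that the chosen pair of fillings induces the \emph{same} contact structure on the boundary; this requires careful handle calculus and potentially an appeal to the classification of tight contact structures on $\Sigma(2,3,7)$ in the style of Ghiggini and Lisca--Stipsicz, so that Baldwin--Sivek's identification of $\Theta^\#(\xi)$ with the image of $1$ applies to both $W_1$ and $W_2$. Once the contact structures are matched, checking the mod-$4$ mismatch is a direct evaluation of the grading-shift formula on two explicit fillings.
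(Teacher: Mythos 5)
Your central step is internally inconsistent. The generator of $\Isharp(S^3)$ is $\Z/4$-homogeneous and cobordism maps on $\Isharp$ are $\Z/4$-homogeneous, so the image of the generator under $\Isharp(W_i\setminus B^4)$ is itself a homogeneous element of degree $d_i$. If that image were a nonzero scalar multiple of $\Theta^\#(\xi)$ for even one filling, then $\Theta^\#(\xi)$ would already be homogeneous, contradicting the conclusion you are aiming for; and two nonzero multiples of one and the same vector can never lie in distinct graded summands, so the sentence ``each must contribute a nonzero homogeneous piece of $\Theta^\#(\xi)$'' cannot be salvaged. (There is also a directional issue: the Baldwin--Sivek functoriality used in this setting sends $\Theta^\#(\xi)\in\Isharp(-Y)$ to the generator of $\Isharp(S^3)$ under the map induced by the filling, not the generator to $\Theta^\#(\xi)$.) Even if you rewrite the argument in the correct direction -- two fillings of the \emph{same} $\xi$ whose induced maps have degrees differing by $2\bmod 4$ would indeed force non-homogeneity -- the crucial input is never produced: you must exhibit two Stein fillings of one fixed contact structure on $\Sigma(2,3,7)$ with $d(W_1)\not\equiv d(W_2)\pmod 4$, and your candidates do not supply this. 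Legendrian representatives of the trefoil with the same Thurston--Bennequin number but different rotation numbers give different Stein structures on the \emph{same} trace $W$ (hence the same degree shift), and by \Cref{thm:baldwin-sivek-independence} their contact invariants are linearly independent, so they induce \emph{different} contact structures -- exactly the opposite of the ``common $\xi$, different topology'' configuration you need; the Milnor fiber/plumbing candidate is neither matched to a specific $\xi$ nor shown to have a different shift mod $4$.

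The paper's argument is structured differently and avoids this existence problem entirely: it takes the two contact structures $\xi_1,\xi_2$ on $\Sigma(2,3,7)=S^3_{-1}(T_{2,3})$ coming from the two Stein structures on the single surgery trace $W$, notes that $\Theta^\#(\xi_1),\Theta^\#(\xi_2)$ are linearly independent in $\Isharp(-Y)$ and that the \emph{same} $\Z/4$-homogeneous map $\Isharp(W)$ sends both to the generator of $\Isharp(S^3)$, so if both were homogeneous they would lie in the same $\Z/4$-grading; the $\Z/4$-graded computation in \Cref{Z4-grading} shows each graded piece of $\Isharp(-\Sigma(2,3,7))$ has dimension at most one, a contradiction, whence at least one of the two invariants is non-homogeneous. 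Your proposal makes no use of this graded computation, and without it (or without the unproven two-fillings input) the claim does not follow.
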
    

\subsection*{Organization} In \Cref{sec:instantons}, we give the necessary background on framed instanton Floer homology.  We recall the construction of the contact invariant in \Cref{sec:contact} and prove \Cref{contact-element}.  The proof of \Cref{dim-Isharp} is given in \Cref{sec:dim-Isharp}.  The generalization to $\Z/4$-gradings is discussed in \Cref{sec:mod4}.  Finally, the computation of $\Isharp$ for $1/n$-surgeries is done in \Cref{sec:trefoil}.  The reader solely interested in the proof of \Cref{dim-Isharp} without the details of the $\Z/2$-gradings may skip directly to \Cref{sec:dim-Isharp}. 

\subsection*{Acknowledgements} We thank John Baldwin for pointing out an improvement of \Cref{lower-bound} from $\maxtb$ to $\maxsl$, and both John Baldwin and Steven Sivek for several very useful and entertaining discussions.  We also thank Ko Honda, Laura Starkston and Jeremy Van Horn-Morris for some very helpful remarks.

%-------------------------------------------------------------------------------------------------
%-------------------------------------------------------------------------------------------------
\section{Background on Instanton Homology}\label{sec:instantons}
In this section we introduce the basic definitions of instanton Floer homology, with a special emphasis on its gradings and the degree of the maps induced by cobordisms. This is because the proof of our main theorem relies on a careful examination of the grading of the elements of the instanton homology determined by contact structures on the given 3--manifold.

\subsection{Floer's instanton homology for admissible pairs}

\begin{definition}\label{admissible} A pair $(Y,\omega)$ is said to be  \emph{admissible} if $Y$ is a connected, closed, oriented 3--manifold and $\omega\subset Y$ is an unoriented, closed, 1--dimensional submanifold of $Y$ that has odd intersection with some oriented and closed surface in $Y$. \end{definition}

Instanton homology was introduced by Floer in ~\cite{floer-invt,floer-dehn,floer-dehn-old}. Here we restrict our attention to the version defined for admissible pairs $(Y,\omega)$. Let $E$ be an $SO(3)$-bundle over $Y$ with $w_2(E)$ Poincar\'{e} dual to $[\omega]\in H_1(Y;\Z/2)$. The instanton homology $I(Y)_\omega$ for the admissible pair $(Y,\omega)$ is morally the Morse homology of the circle-valued Chern--Simons functional defined on the space of connections on $E$ modulo ``restricted'' gauge transformations that lift to $SU(2)$ in a suitable sense, see for example  \cite[Part II]{braam-donaldson}. The admissibility condition guarantees that there are no reducible flat connections on $E$. We will use only complex coefficients, in which case $I(Y)_\omega$ is a vector space over $\C$ with a $\Z/2$-grading, which lifts to a relative $\Z/8$-grading. We note that $I(Y)_\omega$ only depends on $Y$ and $[\omega]\in H_1(Y;\Z/2)$.

In ideal cases, the generators for the chain complex of $I(Y)_\omega$ are flat connections mod gauge, and gradient flow lines correspond to finite energy instantons on $Y\times \R$, but in practice perturbations must be employed to achieve transversality. The $\Z/2$-grading of a (perturbed) flat connection $a$ in the chain complex is given by\footnote{This $\Z/2$-grading differs from that defined in \cite{donaldson-book} by a shift of $b_1(Y) \pmod 2$.}
\[
	\text{gr}(a) := -\text{ind}(D_A) - b_1(X)+b_+(X)-b_1(Y)\pmod 2.
\]
Here $X$ is an oriented 4-manifold with boundary $Y$, and $A$ is a connection extending $a$ on an $SO(3)$-bundle over $X$ that restricts to $E$ over $Y$. The operator $D_A=d_A^\ast\oplus d_A^+$ is the ASD operator associated to $A$, and its index equals the expected dimension of the moduli space of instantons homotopic rel $a$ to the gauge equivalence class of $A$.

Given an oriented closed submanifold $S\subset Y$, there is an associated endomorphism $\mu(S)$ of $I(Y)_\omega$ which has $\Z/8$-degree $\dim S-4$, and only depends on the homology class of $S$ in $Y$. The endomorphisms for even-dimensional submanifolds commute, and so they have simultaneous eigenspaces. For example, for any point $y\in Y$ and any surface $R\subset Y$ the endomorphisms $\mu(y)$ and $\mu(R)$ commute and have $\Z/8$-degrees $-4$ and $-2$ (resp.), and so both have degree $0 \pmod 2$. In the case when the intersection $\omega\cdot R$ is odd and $R$ has positive genus, using work of Mu\~{n}oz~\cite{munoz}, Kronheimer and Mrowka~\cite[Corollary 7.2]{km-excision} show that the eigenvalues of $(\mu(R),\mu(y))$ that correspond to their simultaneous eigenspaces are included in the set $$ \{(i^r(2k), (-1)^r2) \mid k\in\Z,\; 0\leq k\leq g-1,\,r\in\{0,1,2,3\}\}.$$ Following \cite[Section 7]{km-excision}, we will restrict our attention to the eigenspace of $I(Y)_\omega$ that corresponds to the eigenvalues $(2g-2,2)$.

\begin{definition} Let $(Y,\omega)$ be an admissible pair, $y\in Y$ a basepoint, and $R\subset Y$ a closed oriented surface that has odd intersection with $\omega$. Suppose $R$ has genus $g\geq 1$. Define $$I(Y|R)_\omega\subset I(Y)_\omega$$  to be the simultaneous eigenspace for the operators $(\mu(R),\mu(y))$ corresponding to the eigenvalues $(2g-2,2)$. \end{definition}

\begin{remark}\label{rem::torus} As the endomorphisms $\mu(R)$ and $\mu(y)$ have degree $0 \pmod 2$, the vector space $I(Y | R)_\omega$ inherits a $\Z/2$-grading from $I(Y)_\omega$. Furthermore, note that if $R$ has genus 1, the endomorphism $\mu(R)$ has zero as its unique eigenvalue, and thus $I(Y | R)_\omega$ is simply the $+2$-eigenspace of $\mu(y)$ acting on $I(Y)_\omega$. As $\mu(y)$ has degree $-4$ with respect to the relative $\Z/8$-grading, in this case $I(Y |  R)_\omega$ inherits a relative $\Z/4$-grading.
\end{remark}

Next, for an orientable, connected 4-dimensional cobordism $W:Y_1\to Y_2$ we define the integer
\begin{equation}
	d(W) := -\frac{3}{2}(\chi(W)+\sigma(W))+\frac{1}{2}(b_1(Y_2)-b_1(Y_1) + b_0(Y_2)- b_0(Y_1)). \label{eq:ddeg}
\end{equation}
Note that $d(W)$ generally depends on the cobordism structure of $W$, as opposed to the underlying 4-manifold.  In other words, $d(W)$ depends additionally on which components of the boundary are considered incoming or outgoing. Suppose we have an oriented cobordism $(W,\nu):(Y_1,\omega_1)\to (Y_2,\omega_2)$ between admissible pairs, where $W$ is connected and $\nu: \omega_1 \to \omega_2$ is an oriented cobordism embedded in $W$. There is an associated linear map
\begin{equation}\label{cob-admissible}
I(W)_\nu:I(Y_1)_{\omega_1}\to I(Y_2)_{\omega_2}.
\end{equation}
Note that the surface $\nu$ is oriented, even though the curves $\omega_1$ and $\omega_2$ are not; the Poincar\'{e} dual of $[\nu]$ in $H^2(W;\Z)$ corresponds to the lift of $w_2$ of an $SO(3)$-bundle over $W$, listed as the second item in \cite[Part II, 1.2]{braam-donaldson}. 

On occasion we consider $(W,\nu):(Y_1,\omega_1)\to (Y_2,\omega_2)$ where one of $(Y_i,\omega_i)$ is empty, or is a disjoint union of admissible pairs, and the construction of \eqref{cob-admissible} extends to these cases with the following remarks. First, the instanton homology of a disjoint union of admissible pairs is the tensor product of the instanton homologies of the connected component admissible pairs. Second, if $(Y_1,\omega_1)$ is empty, then $I(W)_\nu$ is defined to be an element of $I(Y_2)_{\omega_2}$. Third, if $Y_1$ is disconnected, then the composition law for cobordism maps generally only holds up to a nonzero constant, see for example  \cite[Section 3.2]{braam-donaldson}. However, this latter point will not be important in the sequel. (It may be corrected by carefully choosing gauge groups, as explained in \cite[Section 5.2]{km-unknot}.) 

The $\Z/2$-degree of the cobordism-induced map \eqref{cob-admissible} is given by 
\begin{equation}
	d(W) \equiv \frac{1}{2}\left(\chi(W)+\sigma(W)+b_1(Y_2)-b_1(Y_1)+ b_0(Y_2)- b_0(Y_1)\right) \pmod 2\label{eq:dmod2}
\end{equation}
and so in particular, it does not depend on $\nu$. Furthermore, the map $I(W)_\nu$ only depends on the cobordism $W$ and the homology class of $\nu$.

In the case where  $R_1$ and $R_2$ are embedded surfaces in $Y_1$ and $Y_2$ as above, which are each homologous to $S$, an embedded surface in the cobordism $W$, there is a map $$I(W | S)_{\nu}:I(Y_1 | R_1)_{\omega_1}\to I(Y_2 | R_2)_{\omega_2}.$$ The $\Z/2$-degree of this map is also given by $d(W) \pmod 2$, as this map is simply the restriction of the map from \eqref{cob-admissible} to appropriate eigenspaces.

It will be useful to understand the instanton homology of $\Sigma_g \times S^1$.  The following is from \cite[Propositions 7.8, 7.9]{km-excision}, which uses \cite{munoz} and excision.

\begin{proposition}\label{prop:1diminst}
Consider $\Sigma_g\times S^1$ where $\Sigma_g$ is a closed oriented surface of genus $g$. Let $\omega \subset \Sigma_g\times S^1$ be either (i) an $S^1$ factor, (ii) a primitive closed curve on $\Sigma_g$, or the disjoint union of two such curves in (i) and (ii). Then $I(\Sigma_g\times S^1 | \Sigma_g)_\omega$ is 1-dimensional.
\end{proposition}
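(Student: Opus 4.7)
The plan is to combine two tools: Mu\~{n}oz's computation of the eigenvalue decomposition of $I(\Sigma_g\times S^1)$ under the $\mu$-operators, and the Floer excision theorem of Kronheimer--Mrowka, which relates the top-eigenvalue instanton homology of admissible pairs under cut-and-reglue along codimension-one surfaces of the same genus.

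First, I would use excision to reduce the three cases listed in the statement to a single base case. Taking two copies of $\Sigma_g\times S^1$, each containing the surface $R=\Sigma_g\times\{\mathrm{pt}\}$, one can cut both along $R$ and reglue with the ends swapped. The resulting admissible pairs are again copies of $\Sigma_g\times S^1$, but with the $\omega$-curves recombined: a curve of type $\{\mathrm{pt}\}\times S^1$ on one side can be interchanged with a primitive curve on $\Sigma_g$ on the other. Excision then gives an isomorphism
\[
I(\Sigma_g\times S^1\,|\,\Sigma_g)_{\omega_1}\otimes I(\Sigma_g\times S^1\,|\,\Sigma_g)_{\omega_2} \;\cong\; I(\Sigma_g\times S^1\,|\,\Sigma_g)_{\omega_1'}\otimes I(\Sigma_g\times S^1\,|\,\Sigma_g)_{\omega_2'},
\]
which shows that the dimension of the top eigenspace is the same in all three cases of the statement, once one verifies each case can be matched up across an excision diagram.

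Second, I would carry out the explicit computation in the base case $\omega=\{\mathrm{pt}\}\times S^1$. Here the moduli space of flat SO(3)-connections is identified, via holonomy around the $S^1$-factor, with the moduli space $\mathcal{M}(\Sigma_g)$ of rank-two stable bundles of odd degree on $\Sigma_g$. Mu\~{n}oz's computation of the action of $\mu(\Sigma_g)$ and $\mu(y)$ on $H^*(\mathcal{M}(\Sigma_g))$ gives an explicit description of the simultaneous eigenspace decomposition, from which one reads off that the $(2g-2,2)$-eigenspace is one-dimensional.

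The main obstacle is setting up the excision carefully so that the distinguished surface $R=\Sigma_g$ and the $\omega$-curves are tracked correctly through the cut-and-reglue; in particular, the two primitive-curve cases and the disjoint union case must each be realized as one half of an excision diagram whose other half has known top-eigenspace dimension. The eigenvalue computation itself is then a quotation of Mu\~{n}oz's result, so the substance of the proof lies in organizing the excision bookkeeping rather than in any new Floer-theoretic input.
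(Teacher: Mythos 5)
Your overall strategy (Mu\~noz for a model computation plus excision to relate the various $\omega$'s) is the same one the paper relies on---it simply cites \cite[Propositions 7.8, 7.9]{km-excision}, whose proofs are exactly Mu\~noz plus excision---and your base case is fine as a quotation of Mu\~noz. The gap is in the reduction step. The excision theorem \cite[Theorem 7.7]{km-excision} only applies when $\omega$ meets each excision surface transversely in an odd number of points; this holds for cases (i) and (iii), where $\omega\cdot\Sigma_g=1$, but fails for case (ii), where $\omega=\eta$ lies on a fiber and has $\omega\cdot\Sigma_g=0$, so the pair $(\Sigma_g\times S^1,\eta)$ cannot be fed into an excision along copies of $R=\Sigma_g\times\{\mathrm{pt}\}$ at all. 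Moreover, even when fiber excision does apply, the $\omega$-curves are only cut and recombined where they cross the excision surfaces: vertical curves ($S^1$-factors) stay vertical and curves lying on fibers are untouched, so no cut-and-reglue along $\Sigma_g$-fibers can ``interchange'' a curve of type (i) with one of type (ii) as you claim. (A smaller point: cross-regluing two copies of $\Sigma_g\times S^1$, each cut along one fiber, produces a single connected $\Sigma_g\times S^1$, so your displayed isomorphism should have one tensor factor on one side, of the shape $V_{\omega_1}\otimes V_{\omega_2}\cong V_{\tilde\omega}$.)

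Fiber excision does dispose of case (i) (it gives $V\otimes V\cong V$, and nonvanishing comes from the relative invariant of $\Sigma_g\times D^2$ or from Mu\~noz), and with a little care also case (iii): cross-regluing two copies of $(\Sigma_g\times S^1,\alpha\sqcup\eta)$ merges the two vertical curves and leaves two parallel horizontal curves whose classes cancel mod $2$, giving $\dim V_{(iii)}^2=\dim V_{(i)}=1$. But case (ii) genuinely requires a different excision: the argument in \cite[Proposition 7.9]{km-excision} cuts along \emph{vertical tori} of the form $c\times S^1$ with $c\cdot\eta=1$ (together with the genus-one model $T^3$, where all admissible $\omega$'s are related by diffeomorphisms), producing an isomorphism of the shape
\[
I(\Sigma_g\times S^1|\Sigma_g)_{\alpha\sqcup\eta}\otimes I(\Sigma_1\times S^1|\Sigma_1)_{\alpha_1\sqcup\eta_1}\longrightarrow I(\Sigma_g\times S^1|\Sigma_g)_{\eta},
\]
which is exactly the isomorphism recorded in the proof of \Cref{lemma:surfacesupport}. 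Replacing your fiber-excision bookkeeping for case (ii) with this torus excision closes the gap.
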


In case (i), this 1-dimensional vector space is supported in grading $0 \pmod 2$:

\begin{lemma}\label{lemma:1dim0mod2}
$I(\Sigma_g\times S^1 | \Sigma_g)_{S^1} \cong \C_{(0)}$.
\end{lemma}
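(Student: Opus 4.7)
The strategy is to construct a nonzero element of $I(\Sigma_g \times S^1 \mid \Sigma_g)_{S^1}$ as the image of a cobordism map from the empty admissible pair, then read off its $\Z/2$-grading from the degree formula \eqref{eq:dmod2}. Since \Cref{prop:1diminst} tells us this eigenspace is $1$-dimensional, exhibiting one nonzero element in a definite grading pins down the grading of a generator.

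Concretely, I would take $W = \Sigma_g \times D^2$ and $\nu = \{p\} \times D^2 \subset W$ for a chosen point $p \in \Sigma_g$, so that $(W, \nu)$ is an oriented cobordism from the empty admissible pair to $(\Sigma_g \times S^1, S^1)$, where the $S^1$ factor is realized as $\{p\} \times S^1$. The boundary surface $\Sigma_g \times \{q\}$ extends to $\Sigma_g \times \{0\} \subset W$, so the cobordism map restricts to a linear map
\[
I(W \mid \Sigma_g)_{\nu} \colon \C \longrightarrow I(\Sigma_g \times S^1 \mid \Sigma_g)_{S^1}.
\]
Applying \eqref{eq:dmod2} with $Y_1 = \emptyset$ and $Y_2 = \Sigma_g \times S^1$, and using $\chi(W) = 2-2g$, $\sigma(W) = 0$, $b_1(Y_2) = 2g+1$, $b_0(Y_2) = 1$, gives
\[
d(W) \equiv \tfrac{1}{2}\bigl((2-2g) + 0 + (2g+1) + 1\bigr) = 2 \equiv 0 \pmod 2.
\]
Hence the image of $I(W \mid \Sigma_g)_\nu$ lies entirely in the grading-$0$ summand.

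To complete the argument, I need to verify that $I(W \mid \Sigma_g)_\nu$ is nonzero. The plan is to double $(W, \nu)$ along its boundary to obtain the closed pair $(\Sigma_g \times S^2, \{p\} \times S^2)$. By the cobordism composition law together with the eigenspace decomposition, vanishing of $I(W \mid \Sigma_g)_\nu$ would force the closed-manifold Donaldson-type invariant of $(\Sigma_g \times S^2, \{p\} \times S^2)$ to vanish on the $(2g-2, 2)$-eigenspace. The latter invariant, however, is computed explicitly and shown to be nonzero in the relevant eigenvalue regime by Mu\~{n}oz \cite{munoz}, which is precisely the input used by Kronheimer--Mrowka through excision in establishing \Cref{prop:1diminst}. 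Combined with $1$-dimensionality, this pins down the generator in grading $0$, giving $I(\Sigma_g \times S^1 \mid \Sigma_g)_{S^1} \cong \C_{(0)}$.

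The main obstacle is the nonvanishing step: verifying that the relative invariant of $\Sigma_g \times D^2$ lands nontrivially in the chosen eigenspace. This is not an essentially new difficulty beyond what already powers \Cref{prop:1diminst} --- it reduces, via the doubling argument, to the same Mu\~{n}oz/excision computation --- but it is where all the substantive instanton-theoretic input enters.
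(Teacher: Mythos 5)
Your grading computation is fine and is in fact identical to the one in the paper: the element you produce is the relative invariant of $(\Sigma_g\times D^2,\{p\}\times D^2)$, and \eqref{eq:dmod2} gives $d(\Sigma_g\times D^2)\equiv 0\pmod 2$, so whatever lands in $I(\Sigma_g\times S^1\mid\Sigma_g)_{S^1}$ via this cobordism lies in even degree. The genuine gap is the nonvanishing step: your argument is empty unless the projection of $I(\Sigma_g\times D^2)_{D^2}$ to the $(2g-2,2)$-eigenspace is nonzero, and you have not established this. The doubling reduction does not do it: it only translates the claim into the statement that a certain eigenvalue-projected pairing for the closed pair $(\Sigma_g\times S^2,\{p\}\times S^2)$ is nonzero, which is essentially an equivalent assertion, not an easier one (and, as stated in terms of closed Donaldson-type invariants, it concerns a $b_+=1$ manifold, where such invariants are chamber-dependent, so one must phrase it as a Floer pairing anyway). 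Attributing the needed nonvanishing to an explicit computation in \cite{munoz} is not accurate as stated; what \cite{munoz} and \cite[Section 7]{km-excision} provide, and what \Cref{prop:1diminst} uses, is the eigenvalue/dimension structure of the Floer algebra, not a computation of this projected closed invariant.

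The claim you need is true, but it requires an argument you did not give: for instance, $I(\Sigma_g\times S^1)_{S^1}$ carries a ring structure for which $I(\Sigma_g\times D^2)_{D^2}$ is the unit and $\mu(\Sigma_g)$, $\mu(y)$ act as multiplication operators, so the unit has nonzero component in every nonzero simultaneous (generalized) eigenspace; this again rests on the structural results of \cite{munoz}. The paper's proof of \Cref{lemma:1dim0mod2} avoids the issue entirely: it quotes Mu\~noz's result that the mapping-class-group-invariant subspace containing $I(\Sigma_g\times S^1\mid\Sigma_g)_{S^1}$ is spanned by the elements \eqref{eq:munozgens}, i.e.\ even-degree operators applied to the relative invariants $I(\Sigma_g\times D^2)_{D^2\cup i\Sigma_g}$, each of which has even degree by the same $d(W)$ computation; hence every element of the $1$-dimensional eigenspace is of even degree, with no nonvanishing statement about any particular projection required. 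Either supply the unit/idempotent argument (or an equivalent precise citation) to close your nonvanishing step, or follow the generation argument.
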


\begin{proof}
	The 1-dimensional space $I(\Sigma_g\times S^1 | \Sigma_g)_{S^1}$ is contained in the subspace of $I(\Sigma_g\times S^1)_{S^1}$ invariant under the action of $\text{Sp}(2g,\Z)$ induced from the mapping class group action of $\Sigma_g$. This follows from \cite{munoz}, where it is shown that this invariant subspace is generated by
\begin{equation}
		\mu(\Sigma_g)^a \mu(y)^b \left( \textstyle{\sum_{j=1}^{g} }\mu(\gamma_j)\mu(\gamma_{j+g})\right)^c I(\Sigma_g\times D^2)_{D^2\cup i\Sigma_g} \qquad (a,b,c)\in \Z_{\geq 0}^3, \; i\in \{0,1\}\label{eq:munozgens}
\end{equation}
where $\gamma_1,\ldots,\gamma_{2g}$ form a basis of curves in $\Sigma_g$ with $\gamma_i\cdot \gamma_{i+g}=1$ for $1 \leq i \leq g$ and $\gamma_i\cdot \gamma_{j}=0$ otherwise. The endomorphism in front of $I(\Sigma_g\times D^2)_{D^2\cup i \Sigma_g}$ in \eqref{eq:munozgens} is always of degree $0 \pmod 2$, so it suffices to show that the grading of $I(\Sigma_g\times D^2)_{D^2\cup i\Sigma_g}$ is $0 \pmod 2$. For this, use \eqref{eq:dmod2}:
\begin{equation*}
	d(\Sigma_g\times D^2) \equiv \frac{1}{2}\left( \chi(\Sigma_g) + 0  + (2g+1)-0 + 1-0\right) \equiv 0 \pmod 2. \qedhere
\end{equation*}
\end{proof}

In the next section we will see that all the 1-dimensional instanton homology groups of \Cref{prop:1diminst} are supported in grading $0 \pmod 2$.

\begin{example}\label{ex:S3} We say more about the special case of the admissible pair $(T^3, S^1)$, the description of which is justified by the more general work of Mu\~noz \cite{munoz}. First, $\smash{I(T^3)_{S^1}\cong \C_{(0)}^2}$. Indeed, no perturbation is needed for the critical set of the Chern--Simons functional, as for a non-trivial $SO(3)$-bundle over $T^3$, there are exactly two non-degenerate flat connections modulo restricted gauge transformations (and only one if we use the full $SO(3)$ gauge group). These generators have relative $\Z/8$-grading difference $4 \pmod 8$, so there is no differential. The two generators also correspond to the relative invariants $I(T^2\times D^2)_{D^2}$ and $\mu(y)I(T^2\times D^2)_{D^2}$. In this case $\mu(y)^2=4$, and we recover
\[
 I(T^3 |  T^2)_{S^1} = \text{span}\left\langle I(T^2\times D^2)_{D^2} + \frac{1}{2}\mu(y)I(T^2\times D^2)_{D^2} \right\rangle \cong \C_{(0)}.
 \]
\end{example}

\subsection{Framed instanton homology}

Framed instanton homology is a special case of the construction $I(Y | R)_\omega$ for an admissible pair. To define it, given any closed, connected, oriented 3--manifold $Y$ choose a basepoint $y\in Y$. Let $T^3$ have distinguished basepoint $p^\#$. We may then form the connected sum of $Y$ and the 3-torus $T^3= T^2\times S^1$ with respect to these basepoints. We always view $T^3$ as the product $T^2\times S^1$. 

\begin{definition}The framed instanton homology of a closed, connected, oriented, based 3--manifold $Y$ is defined to be the $\Z/2$-graded complex vector space\footnote{This is technically different from the definition in \cite{scaduto}, but gives an isomorphic theory.}
\[ I^\#(Y) := I(Y\# T^3 | T^2)_{S^1}.\]
\end{definition}

We remark that $I^\#(-Y)$ is $\Z/2$-graded isomorphic to the dual of $I^\#(Y)$ with the grading shifting from $i$ to  $b_1(Y)-i$.

Notice that by \Cref{rem::torus} the vector space $I^\#(Y)$ inherits a relative $\Z/4$-grading. This in fact lifts to an absolute $\Z/4$-grading, which we return to in \Cref{sec:mod4}. The previous duality statement also holds for this $\Z/4$-grading.

\begin{example}
By \Cref{ex:S3}, we see that $I^\#(S^3)$ is isomorphic to $\C_{(0)}$.  
\end{example} 

Let $W:Y_1\to Y_2$ be a 4-dimensional cobordism, and let $\gamma$ be a path in $W$ that joins the basepoints in $Y_1$ and $Y_2$. Then we have an induced map
\begin{equation}\label{cob-sharp}
	I^\#(W):I^\#(Y_1)\to I^\#(Y_2)
\end{equation}
whose degree is $d(W)$.  We will omit the path $\gamma$ from the notation. More generally, for an oriented closed surface $\nu \subset W$ we obtain a similar map $I^\#(W)_\nu$ only depending on $W$ and $[\nu]\in H_2(W;\Z)$. (There is also the case in which there are curves in $Y_1$ and $Y_2$ and $(W,\nu)$ is a cobordism of pairs; however, this will not appear in the sequel.) The degree of the map \eqref{cob-sharp} is also given by $d(W) \pmod 2$, written in \eqref{eq:dmod2}; see \cite[Proposition 7.1]{scaduto}.

\subsection{Exact triangle in framed instanton homology}
Given a knot $K$ in $Y$, there exist several exact triangles relating the framed instanton homology of $Y$ and various Dehn surgeries on $K$. These are all special cases of Floer's original exact triangle \cite{floer-dehn-old, floer-dehn}.  We focus on the particular case of knots in integer homology spheres.  (For more details, see \cite[Section 7.5]{scaduto}.) 

\begin{theorem}\label{exact-triangle} 
Let $K$ be a knot in an integer homology sphere $Y$.  For any $n \in \Z$, there is an exact triangle
\begin{equation}\label{eq:ntriangle}\xymatrix{I^\#(Y_n(K))\ar[rr]&&I^\#(Y_{n+1}(K))\ar[dl] \\
&I^\#(Y)\ar[ul]&
}
\end{equation}
\end{theorem}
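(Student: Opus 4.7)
My plan is to deduce \Cref{exact-triangle} from Floer's original unoriented surgery exact triangle for instanton Floer homology of admissible pairs \cite{floer-dehn-old, floer-dehn}, following the framework laid out in \cite[Section 7.5]{scaduto}. The first step is to render the three manifolds involved admissible by connect-summing with $T^3 = T^2 \times S^1$: view $K$ as a knot in $Y \# T^3$ placed inside the $Y$ summand away from the connect-sum region, and let $\omega = S^1$ be the $S^1$-factor of the $T^3$ summand. Since $K$ is disjoint from $T^3$, we have $(Y \# T^3)_n(K) = Y_n(K) \# T^3$ for every $n \in \Z$, while the trivial ($\infty$) filling gives $Y \# T^3$. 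The curve $\omega$ remains an admissible choice throughout, as it has odd intersection with $T^2 \subset T^3$.

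Applying Floer's triangle to these three Dehn fillings of the knot exterior yields an exact sequence
\begin{equation*}
\xymatrix{
I(Y_n(K) \# T^3)_{S^1} \ar[rr] && I(Y_{n+1}(K) \# T^3)_{S^1} \ar[dl] \\
& I(Y \# T^3)_{S^1} \ar[ul] &
}
\end{equation*}
in which each map is induced, up to a nonzero scalar, by the corresponding two-handle surgery cobordism. Each of these cobordisms is a product on a neighborhood of the $T^3$ summand, so the surface $T^2 \subset T^3$ and basepoint $y \in T^3$ extend across the cobordism as $T^2 \times I$ and $\{y\} \times I$. By naturality of the $\mu$-classes with respect to such product submanifolds, the three cobordism maps commute with the commuting endomorphisms $\mu(T^2)$ and $\mu(y)$, and hence preserve their simultaneous eigenspace decomposition. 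Restricting to the joint $(2g-2, 2) = (0, 2)$-eigenspace ($g = 1$) recovers the triangle \eqref{eq:ntriangle} for $I^\#$.

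The main obstacle is not the homological algebra — once the maps commute with $\mu(T^2)$ and $\mu(y)$, passing to eigenspaces preserves exactness — but rather the precise identification of the abstract edge maps in Floer's triangle with the stated two-handle cobordism maps, up to nonzero scalars, together with the composition and excision properties needed to ensure this is compatible with the eigenspace restriction. This identification is carried out in \cite[Section 7.5]{scaduto}, which I would cite directly to close the argument.
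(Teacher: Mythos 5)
Your proposal is correct and follows essentially the same route as the paper, which simply invokes Floer's original exact triangle for admissible pairs and refers to \cite[Section 7.5]{scaduto} for the details of the $T^3$-connected-sum/eigenspace reduction you spell out. The only point to keep in mind is that the edge maps come with prescribed bundle data (the surfaces $\nu$ in the 2-handle cobordisms), as the paper notes after the theorem, but this is exactly what the cited section of \cite{scaduto} supplies.
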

The cobordism maps are induced by the corresponding 2-handle attachments with certain embedded surfaces $\nu$, and the mod 2 degrees are easily computed from \eqref{eq:ddeg}. For a discussion regarding the mod 4 gradings in the exact triangle, see \Cref{sec:mod4}.

%-------------------------------------------------------------------------------------------------
%-------------------------------------------------------------------------------------------------
\section{The Instanton Contact Invariant}\label{sec:contact}
We now define Baldwin and Sivek's instanton contact invariant for a contact 3-manifold, originally constructed in \cite{baldwin-sivek}. We mostly follow the description given in \cite{baldwin-sivek-su2}, adding some slight modifications that allow us to work in the context of framed instanton homology.

\subsection{Review of the contact invariant}\label{sec::contactinvt}
An {\emph{open book}} is a triple $(S,h,\mathbf{c})$ in which $S$ is a connected, compact, oriented surface with nonempty boundary; $h$ is a diffeomorphism of $S$ with $h|_{\partial S}$ the identity; and $\mathbf{c}=\{c_1, . . . , c_{b_1(S)}\}$ is a set of disjoint, properly embedded arcs with the property that $S\setminus \mathbf{c}$ deformation retracts onto a point. Define the curves $\gamma_1,\ldots,\gamma_{b_1(S)}$ in $\partial \left(S \times [-1,1] \right)$ as:
 \begin{equation}\label{gamma}
 	\gamma_i = \left(c_i\times \{1\}\right)\cup \left( \partial c_i \times [-1,1] \right) \cup \left( h(c_i)\times \{-1\} \right) \subset \partial \left(S\times [-1,1]\right).
 \end{equation}
A contact 3-manifold $M(S,h,\mathbf{c})$ is built from this data by starting with a standard contact structure on $S\times [-1,1]$ (with rounded corners), and attaching $b_1(S)$ contact 2-handles along the $\gamma_i$ curves. This contact 3-manifold has a 2-sphere boundary.

Let $(Y,\xi,p)$ be a based contact 3-manifold. For us this means that $Y$ is a closed, oriented and connected 3-manifold, $\xi$ is a co-oriented contact 2-plane field on $Y$, and $p\in Y$ is a basepoint. An {\emph{open book decomposition}} of $(Y,\xi,p)$ is an open book $(S,h,\mathbf{c})$ together with a contactomorphism $f:M(S,h,\mathbf{c})\to Y\setminus B(p)$, where $B(p)$ is a Darboux ball containing $p$. It follows from the work of Giroux \cite{giroux} that every based contact 3-manifold has an open book decomposition. It is also possible to choose an open book decomposition with $\partial S$ connected. We fix such a choice $(S,h,\mathbf{c},f)$ for $(Y,\xi,p)$.

Next, choose a connected, compact, oriented surface $T$ with an identification $\partial T \cong -\partial S$, and $g(T)\geq 2$\footnote{Although Baldwin and Sivek \cite{baldwin-sivek-su2} assume this genus to be at least 8, it turns out this is not necessary.}. Let $R= S\cup T$ be the closed surface obtained from gluing $S$ and $T$. We extend $h$ to a diffeomorphism of $R$, which we also call $h$, by letting it act on $T\subset R$ as the identity. The mapping torus $R\times_h S^1$ will be viewed as the quotient
\[
	R \times [-1,3] / \left((x,3)\simeq (h(x),-1)\right).
\]
Each $\gamma_i$ defined above may then be viewed as an embedded curve in $\partial \left( S\times [-1,1]\right) \subset R\times_h S^1$. Let $V$ be the cobordism obtained from $(R\times_h S^1)\times [0,1]$ by attaching $b_1(S)$ many 4-dimensional 2-handles to $(R\times_h S^1)\times \{1\}$ along the curves $\gamma_i$ with framing induced by $\partial \left( S\times [-1,1]\right)\times \{1\}$. The outoing end of $V$ is diffeomorphic to $Y\# R\times S^1$ by \cite[Proposition 2.16]{baldwin-sivek-su2}, so we may write this cobordism as follows:
\[
	V: R\times_h S^1 \longrightarrow Y\# R\times S^1.
\]
Let $\eta\subset  \text{int}(T)\times \{2\}\subset R\times_h S^1$ be an oriented, embedded, nonseparating curve, and let $\alpha$ denote the closed loop $\{q\} \times [-1,3]\subset R\times_h S^1$, where $q\in \text{int}(T)$ is disjoint from $\eta$. We may also view these same curves as embedded in $R\times S^1$. The data $(Y\# R\times S^1,R,\alpha,\eta)$ naturally determine, in the terminology of Definition 2.12 of \cite{baldwin-sivek}, a {\emph{marked odd closure}} of $Y\setminus B(p)$, which we denote by $\mathcal{D}_R$. Let $\nu = (\alpha \sqcup \eta)\times [0,1]\subset V$. Then consider
\[
	I(-V|-R)_\nu: I(-R\times_h S^1|-R)_{\alpha \sqcup \eta} \longrightarrow I(-Y\# R\times S^1|-R)_{\alpha \sqcup \eta}.
\]
By \Cref{prop:1diminst} and excision (see \cite[Section 7.3]{km-excision}), the domain of this map is 1-dimensional. Let $1\in I(-R\times_h S^1|-R)_{\alpha \sqcup \eta}$ be any generator. Then
\begin{equation}
	\theta(\mathcal{D}_R) := I(-V|-R)_\nu(1)\in I(-Y\# R\times S^1|-R)_{\alpha \sqcup \eta}\label{eq:thetadef}
\end{equation}
is the contact invariant considered in \cite{baldwin-sivek-su2}. This is defined up to multiplication by $\C^\times$. Thus the contact invariant is really a subspace, of dimension 0 or 1. We would like to view this invariant as a subspace of $I^\#(-Y)$, and to do this we proceed as follows. The constructions go back to \cite{km-excision}, and in fact the most important step, for our purposes, involves genus-decreasing excision cobordism maps, see for example  \cite[Figure 5]{km-excision}. We will also see that what follows fits into the framework of \cite{baldwin-sivek-nat, baldwin-sivek}.

We first fix, for each positive integer $g$, a closed oriented surface $\Sigma_g$ of genus $g$. On $\Sigma_g$ we fix, as depicted in \Cref{fig:excision}, non-separating embedded closed curves $\eta_g$, $a_i^g$, $b_i^g$ ($i=1,\ldots,g$), as well as basepoints $p_g$ and $q_g$ away from the chosen curves. Note that the $a_i^g$ and $b_i^g$ are all pairwise disjoint, and they each intersect $\eta_g$ transversely in exactly one point.

 \begin{figure}
\centering
 \includegraphics[scale=1]{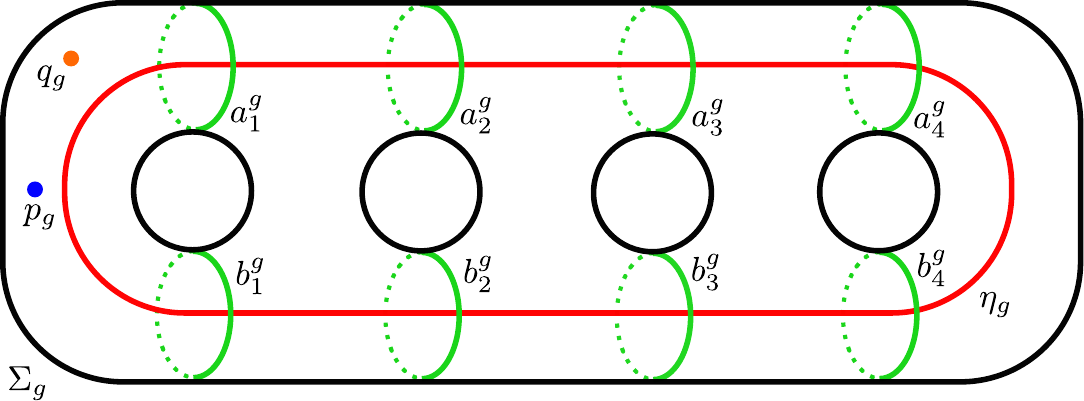}
\caption{The surface $\Sigma_g$ with its curves and basepoints for $g=4$.}\label{fig:excision}
\end{figure}

Next, choose a diffeomorphism $\phi:R\to \Sigma_g$ such that $\phi(\eta)=\eta_g$, $\phi(q)=q_g$, and such that the point in $S\times \{0\}\subset R\times S^1$ at which the connected sum with $Y$ is performed is sent under $\phi$ to $p_g$. Define $\alpha_g = \{q_g\} \times S^1 \subset \Sigma_g \times S^1$. We have an induced isomorphism
\[
	 F_{\phi}:I(-Y\# R\times S^1|-R)_{\alpha \sqcup \eta} \longrightarrow I(-Y\# \Sigma_g\times S^1|-\Sigma_g)_{\alpha_g \sqcup \eta_g}.
\]
Suppose $g\geq 2$. Consider the two closed curves $C^g_1=a^g_{g-1}$, $C_2^g=b^g_{g-1}$ in $\Sigma_g$. Then $\Sigma_g\setminus (C^g_1\sqcup C^g_2)$ is a union of two components, $\Sigma_g'$ and $\Sigma_g''$, such that:
\begin{itemize}
	\item $\Sigma_g''$ is a genus 1 surface with two boundary components,
	\item the point $q_g$ defining the curve $\alpha_g$ lies in $\Sigma_g'$,
	\item the point $p_g \times \{0\} \in \Sigma_g\times S^1$ where the connect sum is taken with $Y$ is in $\Sigma_g'\times S^1$.
\end{itemize}
Each of $T_i=C^g_i\times [-1,3]\subset \Sigma_g\times S^1$ may be viewed as a closed genus 1 surface in $Y\#\Sigma_g\times S^1$. Cut open $Y\#\Sigma_g\times S^1$ along $T_1$ and $T_2$. The resulting boundary of the cut open manifold has four components: $T_1', T_2' \subset \Sigma_g'$ and $T_1'',T_2'' \subset \Sigma_g''$. Gluing $T_1'$ to $T_2'$, and $T_1''$ to $T_2''$, yields
 \begin{figure}
\centering
 \includegraphics[scale=.7]{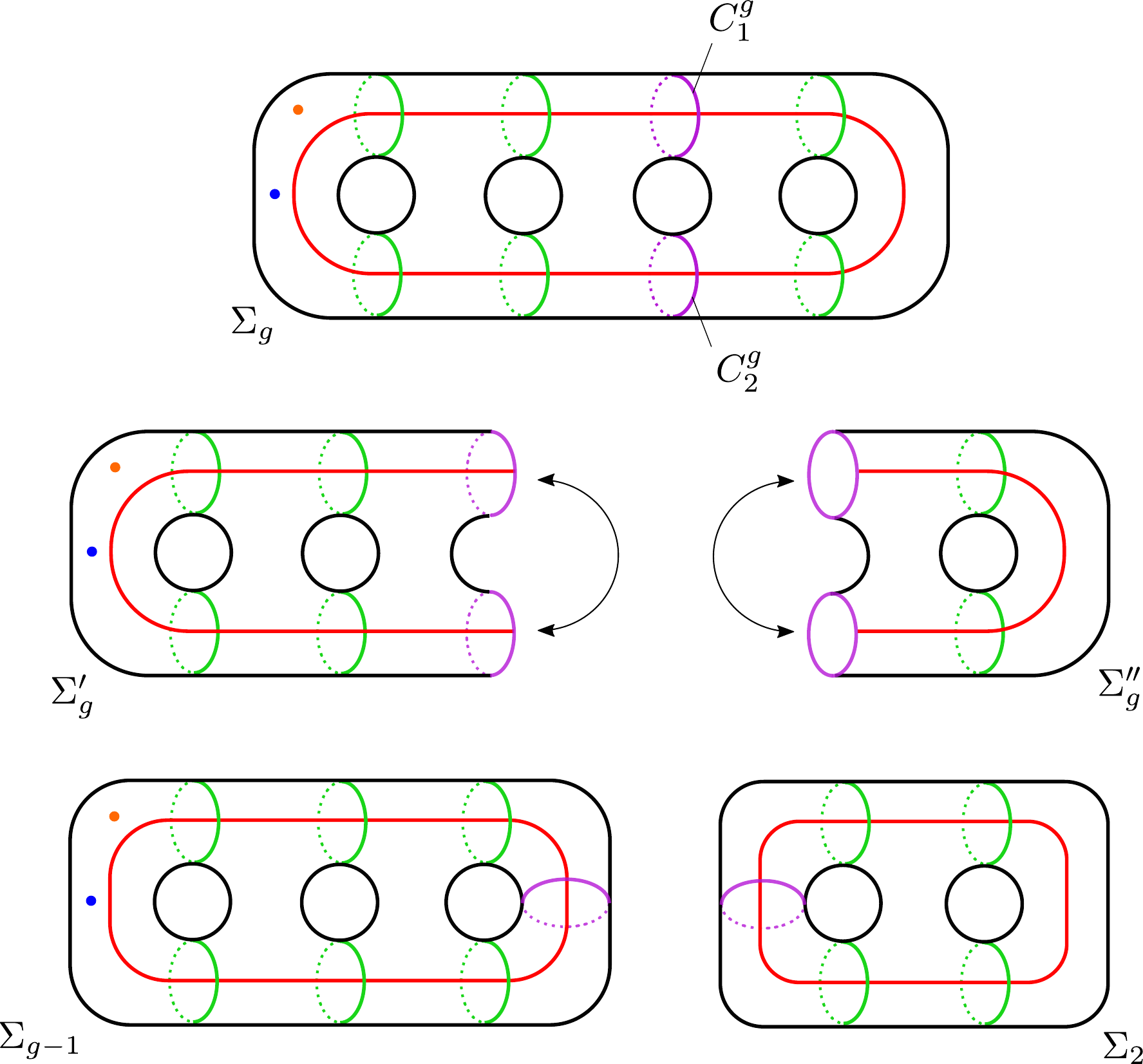}
\caption{Cutting open $\Sigma_g$ and regluing to obtain $\Sigma_{g-1}$ and $\Sigma_2$.}\label{fig:cut}
\end{figure}
 \[
 	Y\# \Sigma_{g-1}\times S^1 \sqcup \Sigma_2 \times S^1
 \]
where we have identified the result of gluing the boundary components of $\Sigma_g'$ with $\Sigma_{g-1}$ and the result of gluing the boundary components of $\Sigma_g''$ with a closed genus 2 surface $\Sigma_2$. This cutting and regluing is done in an $S^1$-equivariant fashion, and is illustrated on $\Sigma_g\times \{0\}\subset Y\# \Sigma_g\times S^1$ in \Cref{fig:cut}. Note that the result of gluing the boundary components of $\Sigma_g'$ is naturally identified with $\Sigma_{g-1}$ in such a way that the part of $\eta_g$ which is reglued is identified with $\eta_{g-1}$, each $a_i^g$ and $b_i^g$ for $i=1,\ldots,g-2$ is identified with $\smash{a_i^{g-1}}$ and $\smash{b_i^{g-1}}$ in $\Sigma_{g-1}$, and the basepoints $p_g$ and $q_g$ go to $p_{g-1}$ and $q_{g-1}$, respectively. There is a cobordism
 \[
 	W^g:Y\# \Sigma_g\times S^1 \longrightarrow \left(Y\# \Sigma_{g-1}\times S^1\right) \sqcup  \left(\Sigma_2 \times S^1\right)
\]
obtained by gluing the following three pieces, where $P$ is a 2-dimensional saddle:
\begin{align}
	W_1^{g} & =  [0,1]\times (Y\# \Sigma_g\times S^1 \setminus \Sigma_g''\times S^1) \label{eq:w1}\\
	W_2^{g} & = P \times T_1 \label{eq:w2}\\
	W_3^{g} & = [0,1]\times (\Sigma_g''\times S^1). \label{eq:w3}
\end{align}
See for example  \cite[\S 3.2]{km-excision} and \cite[Figure 11]{baldwin-sivek-nat} for more details. We obtain a map
\[
	I(-W^g)_\nu:I(-Y\# \Sigma_g\times S^1)_{\alpha_g\sqcup \eta_g} \longrightarrow I(-Y\# \Sigma_{g-1}\times S^1)_{ \alpha_{g-1}\sqcup \eta_{g-1}} \otimes  I(-\Sigma_2 \times S^1)_{\eta_2}
\]
induced by $W^g$ and a surface cobordism $\nu\subset W^g$ prescribing a bundle, the construction of which is straightforward, but whose details will not concern us. The excision theorem \cite[Theorem 7.7]{km-excision} implies that $I(-W^g)_\nu$ is an isomorphism when restricted to the $+2$ eigenspaces of the point class operators; notice that the operators $\mu(T_i)$ vanish because $g(T_i)=1$. Further, $I(-W^g)_\nu$ intertwines the operator $\mu(-\Sigma_g)$ on the domain with $\mu(-\Sigma_{g-1})\otimes 1 + 1\otimes \mu(-\Sigma_2)$ on the codomain, just as in \cite[Proposition 7.9]{km-excision}. Furthermore, the Floer group $I(-\Sigma_2 \times S^1|-{\Sigma_2})_{\eta_2}$ is 1-dimensional, by \Cref{prop:1diminst}, and upon identifying it with $\C$, we obtain from the restriction of $I(-W^g)_\nu$ an isomorphism
\[
	\Psi^g:I(-Y\# \Sigma_g\times S^1|-\Sigma_g)_{\alpha_g\sqcup \eta_g} \longrightarrow I(-Y\# \Sigma_{g-1}\times S^1|-\Sigma_{g-1})_{\alpha_{g-1} \sqcup\eta_{g-1}}.
\]
The codomain of $\Psi^2$ is nearly $I^\#(-Y)$. To make this more precise, we fix once and for all a diffeomorphism between the based 3-manifolds $(\Sigma_1\times S^1 , p_1)$ and $(T^3, p^\#)$, and lift this to a bundle isomorphism between the bundle defined by $\alpha_1\sqcup \eta_1\subset \Sigma_1\times S^1$ and that defined by $S^1\subset T^3$. These identifications induce an isomorphism
\[
	\Phi:I(-Y\# \Sigma_1\times S^1|-\Sigma_1)_{\alpha_1\sqcup \eta_1} \longrightarrow I^\#(-Y).
\]
We define the contact invariant in $I^\#(-Y)$ to be the image of the contact invariant under the above string of identifications:

\begin{definition}\label{contact-invariant}
	$\Theta^\#(Y,\xi) :=\left( \Phi\circ \Psi^{2}\circ \cdots \circ \Psi^g\circ F_{\phi}\right) \left( \theta(\mathcal{D}_R) \right)  \in I^\#(-Y)$.
\end{definition}

Our definition of $\Theta^\#(Y,\xi)$ may be extracted from the general instanton contact invariant of \cite{baldwin-sivek} as follows. First, in \cite{baldwin-sivek-nat}, to the sutured manifold $Y(p)=Y\setminus B(p)$, with a single suture on its boundary, the authors associate a projectively transitive system of $\C$-modules
\[
	\underline{\mathbf{SHI}}(Y(p)) = \left\{ \underline{SHI}(\mathcal{D}) \right\}_{\mathcal{D}}
\]
where the collection ranges over marked odd closures $\mathcal{D}$ of $Y(p)$, of genus at least 2; see Definition 2.12 of \cite{baldwin-sivek}. Each $\underline{SHI}(\mathcal{D})$ is an instanton Floer group defined over $\C$. What this means is that for any two marked odd closures $\mathcal{D}$ and $\mathcal{D}'$ of $Y(p)$, there is an isomorphism
\[
	\underline{\Psi}_{\mathcal{D},\mathcal{D}'}: \underline{SHI}(\mathcal{D})\longrightarrow \underline{SHI}(\mathcal{D}')
\]
well-defined up to multiplication by $\mathbb{C}^\times$ such that $\underline{\Psi}_{\mathcal{D}',\mathcal{D}''} \circ  \underline{\Psi}_{\mathcal{D},\mathcal{D}'} \sim  \underline{\Psi}_{\mathcal{D},\mathcal{D}''}$ and $\underline{\Psi}_{\mathcal{D},\mathcal{D}} \sim \text{id}$, where $\sim$ means equal up to multiplication by $\mathbb{C}^\times$.

Now, turning to our construction of $\Theta^\#(Y,\xi)$, we note that $(-Y\# R\times S^1,\alpha\sqcup \eta)$ and $(-Y\#\Sigma_i\times S^1,\alpha_i\sqcup \eta_i)$ naturally determine marked odd closures $\mathcal{D}_R$ and $\mathcal{D}_i$ of $-Y(p)$. Furthermore, we have, by definition, the identifications
\begin{align*}
	\underline{SHI}(\mathcal{D}_R) &=  I(-Y\# R\times S^1|-R)_{\alpha \sqcup \eta},\\
	 \underline{SHI}(\mathcal{D}_i) &=  I(-Y\# \Sigma_i\times S^1|-\Sigma_{i})_{\alpha_i \sqcup \eta_i}, 
\end{align*}
and the maps we defined above are instances of the maps $\underline{\Psi}_{\mathcal{D},\mathcal{D}'}$ from \cite{baldwin-sivek}, for $i\geq 3$:
\[
	F_{\phi}=\underline{\Psi}_{\mathcal{D}_R,\mathcal{D}_g} , \qquad \Psi^i = \underline{\Psi}_{\mathcal{D}_i,\mathcal{D}_{i-1}}. 
\]
In \cite{baldwin-sivek}, the instanton contact invariant is defined as a collection of elements $\{ \theta(\mathcal{D})\}_{\mathcal{D}}$ where $\theta(\mathcal{D})\in \underline{SHI}(\mathcal{D})$ and $\underline{\Psi}_{\mathcal{D},\mathcal{D}'}(\theta(\mathcal{D}))=\theta(\mathcal{D}')$ up to multiplication by $\mathbb{C}^\times$. Now our definition of $\Theta^\#(Y,\xi)$ may be described from the viewpoint of \cite{baldwin-sivek} as follows. Our choice of open book decomposition determines $\theta(\mathcal{D}_R)\in \underline{SHI}(\mathcal{D}_R)$ as constructed in \eqref{eq:thetadef}. We then have our natural genus 2 marked odd closure $\mathcal{D}_2$ of $-Y(p)$, and we consider
\[
	\theta(\mathcal{D}_2) = \underline{\Psi}_{\mathcal{D}_R,\mathcal{D}_2}(\theta(\mathcal{D}_R)) \in \underline{SHI}(\mathcal{D}_2) = I(-Y\# \Sigma_2\times S^1 | -\Sigma_2)_{\alpha_2\sqcup \eta_2}.
\]
Finally, as $I^\#(-Y)$ is defined using a genus 1 closure, which is not included in the construction of $\underline{\mathbf{SHI}}(-Y(p))$, we use one last isomorphism, $\Phi\circ \Psi^2$, to move to $I^\#(-Y)$:
\[
	\Theta^\#(Y,\xi)  = (\Phi\circ \Psi^2 )(\theta(\mathcal{D}_2) ) \in I^\#(-Y).
\]
Alternatively, we can add $I^\#(-Y)$ to the projectively transitive system $\underline{\mathbf{SHI}}(-Y(p))$ using the map $\Phi\circ \Psi^2$ and taking the projective transitive closure. As the marked odd closure $\mathcal{D}_2$ and the map $\Phi\circ \Psi^2$ are constructed rather independently of $Y$, the naturality properties established in \cite{baldwin-sivek-nat, baldwin-sivek} continue to hold for this slightly larger projectively transitive system. In particular, we summarize the following.

\begin{theorem}[cf. Theorem 1.1 of \cite{baldwin-sivek}]
	The element $\Theta^\#(Y,\xi)\in I^\#(-Y)$ is an invariant of the based contact manifold $(Y,\xi,p)$, well-defined up to multiplication by $\mathbb{C}^\times$. In particular, it is independent of the choice of open book decomposition of $(Y,\xi,p)$.
\end{theorem}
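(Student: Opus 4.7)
The plan is to reduce to Baldwin--Sivek's invariance result for the projectively transitive system $\underline{\mathbf{SHI}}(-Y(p))$, and then argue that the genus 1 extension via $\Phi \circ \Psi^2$ does not break invariance. Because all of the essential work has already been carried out in \cite{baldwin-sivek-nat, baldwin-sivek}, the proof is largely a matter of bookkeeping.

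First, I would invoke Theorem 1.1 of \cite{baldwin-sivek} directly for the system $\underline{\mathbf{SHI}}(-Y(p))$: the collection $\{\theta(\mathcal{D})\}_{\mathcal{D}}$, where $\mathcal{D}$ ranges over marked odd closures of genus at least 2, is an invariant of $(Y,\xi,p)$. In particular, for any two open book decompositions of $(Y,\xi,p)$ and associated marked odd closures $\mathcal{D}_R$ and $\mathcal{D}_{R'}$, the classes $\theta(\mathcal{D}_R)$ and $\theta(\mathcal{D}_{R'})$ constructed as in \eqref{eq:thetadef} are identified by $\underline{\Psi}_{\mathcal{D}_R,\mathcal{D}_{R'}}$ up to multiplication by $\mathbb{C}^\times$, and similarly both agree with $\theta(\mathcal{D}_2)$ under the relevant naturality maps.

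Second, I would check that enlarging the system by adjoining $I^\#(-Y)$ via the map $\Phi\circ\Psi^2$ preserves the projectively transitive structure. The crucial observation is that every ingredient used to define $\Phi\circ\Psi^2$ is fixed once and for all, independently of $(Y,\xi)$ and of the open book: the reference surfaces $\Sigma_g$ with their chosen curves $\eta_g, a_i^g, b_i^g$ and basepoints $p_g, q_g$; the cutting curves $C_1^2, C_2^2$ on $\Sigma_2$ and the excision cobordism $W^2$ built from the pieces \eqref{eq:w1}--\eqref{eq:w3}; the identification of $I(-\Sigma_2\times S^1|-\Sigma_2)_{\eta_2}$ with $\mathbb{C}$ (which is a one-time choice of generator); and finally the fixed diffeomorphism $(\Sigma_1\times S^1, p_1)\cong (T^3,p^\#)$ used to define $\Phi$. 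Since none of these auxiliary data involve $Y$, the map $\Phi\circ\Psi^2$ commutes (up to $\mathbb{C}^\times$) with all naturality isomorphisms $\underline{\Psi}_{\mathcal{D},\mathcal{D}'}$, so appending $I^\#(-Y)$ to $\underline{\mathbf{SHI}}(-Y(p))$ with the new transition $\Phi\circ\Psi^2$ and taking the projective transitive closure yields a well-defined enlarged system.

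Third, putting the pieces together: given any open book decomposition of $(Y,\xi,p)$, the element $\theta(\mathcal{D}_R)\in \underline{SHI}(\mathcal{D}_R)$ of \eqref{eq:thetadef} is transported via $\underline{\Psi}_{\mathcal{D}_R,\mathcal{D}_2} = \Psi^3\circ\cdots\circ\Psi^g\circ F_\phi$ to an element $\theta(\mathcal{D}_2)\in \underline{SHI}(\mathcal{D}_2)$ which, by the invariance statement of the first step, depends only on $(Y,\xi,p)$ up to $\mathbb{C}^\times$. Applying the canonical isomorphism $\Phi\circ\Psi^2$ of the second step yields $\Theta^\#(Y,\xi)\in I^\#(-Y)$, well-defined up to $\mathbb{C}^\times$ and independent of the open book. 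The only point requiring care is the final step, since the general theory of marked odd closures in \cite{baldwin-sivek} is stated only for genus at least 2; however, because $\Phi\circ\Psi^2$ is constructed from fixed auxiliary data not depending on $(Y,\xi)$, the naturality arguments of \cite{baldwin-sivek-nat} extend essentially verbatim.
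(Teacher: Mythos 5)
Your proposal is correct and follows essentially the same route as the paper, which states this theorem as a summary of the preceding discussion: invariance is inherited from Theorem 1.1 of \cite{baldwin-sivek} via the projectively transitive system $\underline{\mathbf{SHI}}(-Y(p))$, and the passage to $I^\#(-Y)$ is harmless because $\mathcal{D}_2$ and the map $\Phi\circ\Psi^2$ are fixed independently of $(Y,\xi)$ and the open book, so the enlarged system obtained by taking the projective transitive closure retains the naturality properties of \cite{baldwin-sivek-nat, baldwin-sivek}. Your identification of $\underline{\Psi}_{\mathcal{D}_R,\mathcal{D}_2}$ with $\Psi^3\circ\cdots\circ\Psi^g\circ F_\phi$ and your checklist of the fixed auxiliary data match the paper's reasoning.
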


The following property of the contact invariant under Stein fillings will be the key ingredient in the proof of \Cref{dim-Isharp}.     
\begin{theorem}[cf. Theorem 1.6 of \cite{baldwin-sivek-su2}]\label{thm:baldwin-sivek-independence}
Let $W$ be a compact 4-manifold with boundary $Y$. Suppose that $W$ has Stein structures $J_1,\ldots,J_n$ which induce contact structures $\xi_1,\ldots,\xi_n$ on $Y$. If the Chern classes $c_1(J_1),\ldots,c_1(J_n)$ are distinct as elements in $H^2(W;\R)$, then
\[
	\Theta^\#(Y,\xi_1), \; \ldots,\; \Theta^\#(Y,\xi_n) \in I^\#(-Y)
\]
are linearly independent. In particular, the dimension of $I^\#(-Y)$ is at least $n$.
\end{theorem}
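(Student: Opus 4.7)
The plan is to follow the strategy of Baldwin--Sivek in \cite{baldwin-sivek-su2}, adapting their $SU(2)$ sutured argument to the framed instanton setting $I^\#$. The high-level idea is to use each Stein filling as a cobordism and to detect the individual contact invariants via decorated cobordism maps whose decorations track the Chern class information; the distinctness of the $c_1(J_i)$ then forces the detection maps to be jointly separating.

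First, for each $i$, remove a small Darboux ball from the interior of $W$ to reinterpret $(W, J_i)$ as a Stein cobordism $W^\circ : (S^3, \xi_{\mathrm{std}}) \to (Y, \xi_i)$. Reversing orientation and choosing a properly embedded oriented surface $\nu \subset W^\circ$, one obtains framed instanton cobordism maps
\[
I^\#(-W^\circ)_\nu : I^\#(-Y) \longrightarrow I^\#(-S^3) \cong \mathbb{C}
\]
depending only on $[\nu]$. By the naturality and Stein-cobordism functoriality of the contact invariant, extending the framework of \cite{baldwin-sivek-nat, baldwin-sivek} to the framed setting with surface decorations, for a decoration $\nu_i$ compatible with $c_1(J_i)$ the map $I^\#(-W^\circ)_{\nu_i}$ sends $\Theta^\#(Y, \xi_i)$ to a nonzero multiple of the generator $\Theta^\#(S^3, \xi_{\mathrm{std}})$ of $I^\#(-S^3)$. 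This step is the framed analogue of the nonvanishing of the relative Stein invariant used in \cite{baldwin-sivek-su2}.

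To deduce linear independence, suppose $\sum_i a_i \Theta^\#(Y, \xi_i) = 0$ and apply the maps $I^\#(-W^\circ)_{\nu_j}$ to obtain the linear relations $\sum_i a_i M_{ji} = 0$, where $M_{ji} = I^\#(-W^\circ)_{\nu_j}(\Theta^\#(Y, \xi_i))$. The key point is that since $c_1(J_1), \ldots, c_1(J_n)$ are distinct in $H^2(W;\mathbb{R})$, one can pick a 2-cycle $\Sigma \subset W$ so that the pairings $c_1(J_i) \cdot \Sigma$ are all distinct real numbers. The contact invariants $\Theta^\#(Y, \xi_i)$ then sit in distinct generalized eigenspaces for an operator built from $\mu(\Sigma)$ acting on $I^\#(-Y)$, with eigenvalue determined by $c_1(J_i) \cdot \Sigma$, and a Vandermonde-type argument using polynomials in $\mu(\Sigma)$ composed with the $I^\#(-W^\circ)_{\nu_j}$ forces every $a_i$ to vanish.

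The principal obstacle is establishing the compatibility in the first step: one must show that the topological surface decoration $\nu$ appearing in the framed instanton cobordism map can be chosen to record $c_1(J_i)$ in such a way that the open-book construction of $\Theta^\#$ described in \Cref{sec:contact} is correctly transported by the Stein cobordism. Concretely, this requires threading the Stein 2-handle attachment arguments of Baldwin--Sivek through our marked odd closures and verifying that the resulting relative instanton invariants behave as generalized eigenvectors for $\mu(\Sigma)$ with the expected eigenvalue. Once this compatibility is set up, the linear-algebra conclusion of the second step is essentially formal, and the bound $\dim I^\#(-Y) \geq n$ follows immediately.
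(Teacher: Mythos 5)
Your proposal sets out to reprove this statement from scratch, but in the paper it is not proved at all: it is imported directly from Baldwin--Sivek \cite[Theorem 1.6]{baldwin-sivek-su2}, and the only content the paper supplies is the discussion in \Cref{sec:contact} identifying $\Theta^\#(Y,\xi)$ with the sutured contact invariant $\theta(\mathcal{D})$ through the canonical isomorphisms $\Phi\circ\Psi^2\circ\cdots\circ F_\phi$ (i.e.\ enlarging the projectively transitive system of \cite{baldwin-sivek-nat,baldwin-sivek} by the genus-one closure), so that linear independence transfers verbatim to $I^\#(-Y)$. Attempting an independent proof is legitimate, but your argument has a genuine gap at its central step.

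The gap is the claim that the classes $\Theta^\#(Y,\xi_i)$ lie in distinct generalized eigenspaces of ``an operator built from $\mu(\Sigma)$'' on $I^\#(-Y)$, with eigenvalue determined by $c_1(J_i)\cdot\Sigma$ for a $2$-cycle $\Sigma\subset W$. No such operator exists: the $\mu$-operators acting on instanton Floer groups are defined only for submanifolds of the $3$-manifold, and their eigenvalues are universal --- as recalled in Section 2, the pairs $(\mu(R),\mu(y))$ have eigenvalues in the finite set $\{(i^r(2k),(-1)^r2)\}$ depending only on $g(R)$ --- so they cannot record the filling-dependent real numbers $c_1(J_i)\cdot\Sigma$. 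In Baldwin--Sivek's actual argument the Chern classes enter through $4$-dimensional data: evaluations of decorated relative Donaldson-type invariants of the Stein domain (after embedding into closed symplectic $4$-manifolds and invoking structure theorems for their Donaldson invariants), not through an eigenspace decomposition of $I^\#(-Y)$. Without that input your first step is also insufficient: knowing only that each $I^\#(-W^\circ)_{\nu_j}$ sends $\Theta^\#(Y,\xi_j)$ to a nonzero multiple of the generator of $I^\#(-S^3)\cong\C$ says nothing about the off-diagonal entries $M_{ji}$, which could all be nonzero, so the ``essentially formal'' Vandermonde step never gets a matrix it can invert. As written the proof therefore does not close; in the context of this paper the correct route is the one the authors take --- quote \cite{baldwin-sivek-su2} and verify the compatibility of $\Theta^\#$ with $\theta(\mathcal{D}_2)$ as in \Cref{sec:contact}.
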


\subsection{A mod 2 invariant of 2-plane fields} In order to determine the mod 2 grading of the contact invariant $\Theta^\#$, we will need a mod 2 invariant of 2-plane fields first.  Let $\xi$ be an orientable 2-plane field over a closed, oriented 3-manifold $Y$. We define a mod 2 invariant of $\xi$ as follows. Let $(X,J)$ be an almost complex 4-manifold with $\partial X = Y$ such that $\xi = TY\cap J TY$. We then define
\begin{equation}\label{delta-4d}
	\delta(Y,\xi) := \frac{1}{2}(\chi(X)+\sigma(X) + b_1(Y) - 1) \pmod 2.
\end{equation}
If we view $X$ as a cobordism from the empty set to $Y$, then $\delta(Y,\xi)\equiv d(X)-1 \pmod 2$.

\begin{proposition}\label{delta-inv}
	The quantity $\delta(Y,\xi)\in\Z/2$ is an invariant of the homotopy class of the (unoriented) orientable 2-plane field $\xi$ on the oriented 3-manifold $Y$.
\end{proposition}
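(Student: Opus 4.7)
The plan is to prove three things: existence of an almost complex filling, integrality of the formula defining $\delta$, and invariance under the two sources of ambiguity (choice of filling and homotopy of $\xi$).

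For existence, given any oriented 2-plane field $\xi$ on $Y$, start with any compact oriented 4-manifold $X_0$ with $\partial X_0=Y$ (which exists since $\Omega_3^{SO}=0$); the plane field $\xi$ defines an almost complex structure on a collar of $Y$, and the primary obstruction to extending across $X_0$ lives in $H^3(X_0,Y;\Z)\cong H_1(X_0)$ and can be killed by attaching 2-handles. For the mod-$2$ well-definedness, a direct computation using Poincar\'e-Lefschetz duality on the pair $(X,Y)$ gives $\chi(X)+\sigma(X)\equiv b_1(Y)+1\pmod 2$ for any compact connected oriented 4-manifold with connected boundary, so $\chi(X)+\sigma(X)+b_1(Y)-1$ is always even.

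The main step is independence of $(X,J)$. Given two fillings $(X_1,J_1)$ and $(X_2,J_2)$ of $(Y,\xi)$, I would form the closed oriented 4-manifold $Z=X_1\cup_Y\overline{X_2}$, where $\overline{X_2}$ is $X_2$ with reversed orientation equipped with the almost complex structure $\overline{J_2}$ obtained from $J_2$ by reversing sign along the normal line to $\xi$ inside $TY$. Then $\overline{J_2}$ is compatible with the orientation of $\overline{X_2}$ and induces the same unoriented plane field $\xi$ on the boundary, so $J_1$ and $\overline{J_2}$ agree on a collar of $Y$ up to homotopy. After possibly stabilizing by $\overline{\C P^2}$-summands (which preserve $\chi+\sigma\pmod 4$) to kill any remaining extension obstruction, the two structures glue to an almost complex structure $J$ on $Z$. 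For the closed almost complex $(Z,J)$, combining $c_1(Z)^2=2\chi(Z)+3\sigma(Z)$ with van der Blij's theorem $c_1(Z)^2\equiv\sigma(Z)\pmod 8$ (applicable since $c_1\equiv w_2\pmod 2$ is characteristic) yields $\chi(Z)+\sigma(Z)\equiv 0\pmod 4$. Novikov additivity $\sigma(Z)=\sigma(X_1)-\sigma(X_2)$, together with $\chi(Z)=\chi(X_1)+\chi(X_2)$ (using $\chi(Y)=0$) and the mod-$2$ constraint from the previous paragraph, then forces $\chi(X_1)+\sigma(X_1)\equiv\chi(X_2)+\sigma(X_2)\pmod 4$, which is exactly independence of $\delta(Y,\xi)\in\Z/2$ from $(X,J)$.

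For homotopy invariance, given a homotopy from $\xi_0$ to $\xi_1$, fix any a.c.~filling $(X,J_0)$ of $(Y,\xi_0)$; extend the homotopy to a.c.~structures on a collar of $Y$ and then (after stabilizing $X$ if necessary) to $J_1$ on $X$ with $(X,J_1)$ filling $(Y,\xi_1)$. Since $\chi+\sigma$ depends only on the underlying smooth $X$, we have $\delta(Y,\xi_0)=\delta(Y,\xi_1)$, and independence of the orientation of $\xi$ follows from the observation that $-J$ induces the same unoriented plane field as $J$ with identical $\chi,\sigma$. The main obstacle I expect is the a.c.-structure gluing in the closing-up step: $J_1$ and $\overline{J_2}$ agree only on a neighborhood of $Y$, and extending globally over $Z$ may be obstructed by classes in higher cohomology, so one must choose stabilizations that annihilate those obstructions without altering $\chi+\sigma\pmod 4$.
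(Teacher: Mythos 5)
Your overall strategy (close up two fillings, apply $c_1^2 = 2\chi + 3\sigma$ together with van der Blij) is the right circle of ideas, but the key step --- producing an almost complex structure $\overline{J_2}$ on the orientation-reversed filling $\overline{X_2}$ --- is a genuine gap, and in fact it fails in general. ``Reversing sign along the normal line to $\xi$ inside $TY$'' only modifies $J_2$ over the boundary; it defines nothing on the interior of $X_2$, and there is a global obstruction to the existence of \emph{any} almost complex structure on $\overline{X_2}$ inducing $\xi$ on $-Y$. Concretely, if both $(X_2,J_2)$ and $(\overline{X_2},\overline{J_2})$ were compatible with $\xi$, they would glue to give a closed almost complex structure on the double of $X_2$, which has signature $0$ and Euler characteristic $2\chi(X_2)$; the congruence $\chi+\sigma\equiv 0 \pmod 4$ that you yourself invoke then forces $\chi(X_2)$ to be even. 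But blowing up a filling in its interior (connected sum with $\overline{\C P^2}$, which preserves almost complexness and does not change the boundary data) produces fillings of $(Y,\xi)$ with odd Euler characteristic, for which no such $\overline{J_2}$ can exist. The same parity problem surfaces in your final arithmetic: writing $\chi_i=\chi(X_i)$, $\sigma_i=\sigma(X_i)$, your gluing gives $\chi(Z)=\chi_1+\chi_2$, $\sigma(Z)=\sigma_1-\sigma_2$, hence $\chi_1+\sigma_1\equiv -\chi_2+\sigma_2 \pmod 4$, which differs from the needed $\chi_1+\sigma_1\equiv \chi_2+\sigma_2 \pmod 4$ by $2\chi_2$; the mod~$2$ parity constraint says nothing about $\chi_2$ alone, so the conclusion does not follow. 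Finally, the proposed repair by $\overline{\C P^2}$-stabilization is circular: such stabilization preserves $\chi+\sigma$, while a closed almost complex $4$-manifold must satisfy $\chi+\sigma\equiv 0\pmod 4$, so the stabilization can only succeed in the situations where the congruence you are trying to establish already holds.

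The paper sidesteps orientation reversal entirely: it glues an arbitrary almost complex filling $(X_0,J_0)$ of $(Y,\xi)$ to an arbitrary almost complex filling $(X_1,J_1)$ of $(-Y,\xi)$ --- both exist, and the two structures genuinely match along the boundary because each induces $\xi$ there --- and deduces $\chi(W)+\sigma(W)\equiv 0\pmod 4$ for the resulting closed manifold, i.e.\ $\delta(Y,\xi)+\delta(-Y,\xi)\equiv b_1(Y)-1\pmod 2$. Since the two fillings vary independently, each summand is separately independent of the choice of filling, which is exactly the proposition; homotopy invariance is then the collar argument you also give. If you want to compare two fillings of the same $(Y,\xi)$ directly, the correct move is to test each against one fixed filling of $(-Y,\xi)$, rather than attempting to reverse the orientation of one of them.
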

\begin{proof}

	Let $(X_0,J_0)$ and $(X_1,J_1)$ be almost complex 4-manifolds with $\partial X_0=Y$ and $\partial X_1=-Y$, each compatible with $\xi$. Glue $X_0$ and $X_1$ along $Y$ to form a closed 4-manifold $W$. The almost complex structures $J_0$ and $J_1$ fit together to define one on $W$, say $J$. We have
	\[
		c_1(W,J)^2 = 2\chi(W) + 3\sigma(W),
	\]
	as holds for any almost complex 4-manifold. Furthermore, $c_1(W,J)\equiv w_2(W) \pmod 2$ is charactersitic for the intersection pairing on $W$, and so $c_1(W,J)^2\equiv \sigma(W) \pmod 8$. Consquently $\chi(W)+\sigma(W)\equiv 0 \pmod 4$. Using additivity of euler characteristics and signatures, we then have $\delta(Y,\xi)+\delta(-Y,\xi)\equiv b_1(Y) - 1 \pmod 2$. As $(X_0,J_0)$ and $(X_1,J_1)$ were chosen independently, the quantities $\delta(Y,\xi)$ and $\delta(-Y,\xi)$ are independent of these choices and thus invariants of $(\pm Y,\xi)$.
	
	Finally, if $\xi'$ is homotopic to $\xi$, then for any $(X,J)$ as in the definition of $\delta(Y,\xi)$, we may attach $I\times Y$, to obtain a 4-manifold diffeomorphic to $X$, and extend the almost complex structure $J$ to one compatible with $\xi'$ by using a homotopy from $\xi$ to $\xi'$. Then we may use the same expression to compute $\delta(Y,\xi')$ as was used for $\delta(Y,\xi)$.
\end{proof}
Note in the course of the proof we have established that
\begin{equation}
	\delta(Y,\xi) + \delta(-Y,\xi) \equiv  b_1(Y) - 1 \pmod 2. \label{eq:dual}
\end{equation}
Suppose $(Y,\xi)$ and $(Y',\xi')$ arise as oriented contact 3-manifolds. Then we may form their connected sum, written $(Y\# Y', \xi \# \xi')$. The invariant $\delta(Y,\xi)$ satisfies
\begin{equation}
	\delta(Y\# Y',\xi\# \xi') = \delta(Y,\xi) + \delta(Y',\xi'). \label{eq:additivity}
\end{equation}
This is easily verified from the definition by taking the boundary sum of almost complex manifolds bounding each of the summands.

\begin{remark}\label{rmk:orientations}
The operation of reversing the orientation of a contact 3-manifold and the operation of taking connected sums do not commute. For example, consider the 3-sphere with its standard contact structure $(S^3,\xi_{\text{std}})$. Then $(-S^3,\xi_\text{std})\# (-S^3,\xi_\text{std})$ is not equivalent to the orientation-reversal of $(S^3,\xi_\text{std})\# (S^3,\xi_\text{std})=(S^3,\xi_\text{std})$, as the former has $\delta\equiv 0$ (by \eqref{eq:additivity}) and the latter has $\delta \equiv 1$ (by \eqref{eq:dual}).
\end{remark}

When the euler class of the 2-plane field $\xi$ is torsion, the quantity $\delta(Y,\xi)\in\Z/2$ is determined by other known invariants of the 2-plane field $\xi$. With notation as above, let
\[
	d_3(\xi) := \frac{1}{4}(c_1(X,J)^2 -3\sigma(X)-2\chi(X)) \in \Q.
\]
This was defined by Gompf, who showed in \cite[Theorem 4.5]{gompf} that $d_3$ is an invariant of the homotopy class of $\xi$. (Note that for $d_3(\xi)$ to make sense, the restriction of $c_1(X,J)$ to the boundary, that is the euler class of $\xi$, must be torsion.) Let us also recall the $\rho$ invariant of Spin$^c$ structures on a 3-manifold $Y$; this is a map $\rho:\text{Spin}^c(Y)\to \mathbb{Q}/2\Z$. For a Spin$^c$ structure $\mathfrak{t}\in \text{Spin}^c(Y)$, it is defined by
\[
	\rho(\mathfrak{t}) := \frac{1}{4}(c_1(\mathfrak{s})^2-\sigma(X)) \pmod 2
\]
where $(X,\mathfrak{s})$ is a Spin$^c$ 4-manifold bounded by $(Y,\mathfrak{t})$. Then we have
\begin{equation*}
	\delta(Y,\xi) \equiv \rho(\mathfrak{t}_\xi) - d_3(\xi) + \frac{1}{2}(b_1(Y)-1)  \pmod 2 
\end{equation*}
where $\mathfrak{t}_\xi$ is the Spin$^c$ structure determined by $\xi$.

\subsection{The mod 2 grading of the contact invariant}

\begin{proposition}\label{contact-grading}
	Suppose $\xi$ is a contact structure on a closed oriented 3-manifold $Y$. Then the instanton contact invariant $\Theta^\#(Y,\xi)\in I^\#(-Y)$ is homogeneous with respect to the $\Z/2$-grading, and is supported in degree $\delta(-Y,\xi)+1\in \Z/2$.
\end{proposition}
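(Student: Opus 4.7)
The plan is to compute the $\Z/2$-grading of $\Theta^\#(Y,\xi)$ by walking through the chain of maps in \Cref{contact-invariant}, showing each stage is homogeneous and tracking the degree shifts. Writing
\[
	\Theta^\#(Y,\xi)=(\Phi\circ\Psi^2\circ\cdots\circ\Psi^g\circ F_\phi\circ I(-V|-R)_\nu)(1),
\]
it suffices to check that (i) the generator $1\in I(-R\times_h S^1|-R)_{\alpha\sqcup\eta}$ lies in degree $0\pmod 2$, (ii) each of $F_\phi$, the $\Psi^i$, and $\Phi$ has $\Z/2$-degree $0$, and (iii) the cobordism map $I(-V|-R)_\nu$ contributes a grading shift equal to $\delta(-Y,\xi)+1\pmod 2$.

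For (i), the group is one-dimensional by \Cref{prop:1diminst} together with excision, and the same excision argument combined with \Cref{lemma:1dim0mod2} places it in degree $0$, confirming en passant the claim made immediately after \Cref{lemma:1dim0mod2}. For (ii), $F_\phi$ and $\Phi$ are induced by bundle-preserving diffeomorphisms and so preserve the grading. Each $\Psi^i$ is the restriction of $I(-W^i)_\nu$ to eigenspaces, followed by the identification of the tensor factor $I(-\Sigma_2\times S^1|-\Sigma_2)_{\eta_2}$ with $\C$ in degree $0$ by (i). Using the decomposition \eqref{eq:w1}--\eqref{eq:w3}, the product pieces $W_1^i$ and $W_3^i$ contribute $\chi=\sigma=0$, while $W_2^i=P\times T_1$ with $P$ a pair of pants and $T_1$ a torus also has $\chi=\sigma=0$. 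Computing the boundary Betti numbers---$b_0$ increases by $1$ and $b_1$ by $3$ passing from the incoming to the outgoing end---and substituting into \eqref{eq:dmod2} yields $d(W^i)\equiv 0\pmod 2$.

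The heart of the argument is (iii). The cobordism $V$ is obtained by attaching $b_1(S)$ Weinstein 2-handles along the Legendrian curves $\gamma_i$ with page framing, so $V$ carries a natural almost complex structure compatible with the contact structure on its outgoing end. Capping off the mapping torus end $R\times_h S^1$ with a standard almost complex filling and then peeling off the $R\times S^1$ summand via the additivity \eqref{eq:additivity} and duality \eqref{eq:dual} of $\delta$, one produces an almost complex 4-manifold $(X,J)$ bounding $(Y,\xi)$. A direct computation of $\chi(V)$, $\sigma(V)$, and the boundary Betti numbers, substituted into \eqref{eq:dmod2}, matches the accumulated degree shift with $\delta(-Y,\xi)+1$ via \eqref{delta-4d}. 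The main obstacle is the bookkeeping in this last step: producing the explicit almost complex filling of the mapping torus end and tracking $\chi$, $\sigma$, and $b_1$ through the $R\times S^1$ connected-sum and the orientation reversal from $Y$ to $-Y$, using \eqref{eq:dual} and \eqref{eq:additivity} to land precisely on $\delta(-Y,\xi)+1$ rather than a related expression.
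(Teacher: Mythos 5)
There are two genuine gaps, and together they undercut the proposed structure. First, step (i) is unjustified. The domain of $I(-V|-R)_\nu$ is the Floer group of the mapping torus $-R\times_h S^1$ with nontrivial monodromy, whereas \Cref{lemma:1dim0mod2} and \Cref{lemma:surfacesupport} concern only products $\Sigma_g\times S^1$. The excision that handles mapping tori is performed along genus-$g$ fibers, and its $\Z/2$-degree is not zero for free: the middle piece is $P\times R$ with $\chi=2g-2$, and the boundary Betti numbers change by amounts depending on the action of $h$ on $H_1(R)$ (note also that the absolute grading convention itself involves $b_1$ of the $3$-manifold, which varies with $h$). More fundamentally, fiber excision only relates the grading of the generator for $h_1h_2$ to those for $h_1$ and $h_2$, so it cannot by itself pin down the absolute grading for a single nontrivial monodromy; some absolute input is required. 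This is exactly why the paper does not argue your way: it invokes Baldwin--Sivek's realization of $\theta(\mathcal{D}_R)$ as $I(-V|-R)_\nu$ applied to the relative Donaldson invariant of the Lefschetz fibration $Z$ bounding $-R\times_h S^1$, so the grading is $d(Z\cup_{-R\times_h S^1}(-V))$ computed via \eqref{eq:dmod2}, and the grading of the abstract generator ``$1$'' is never needed. Relatedly, in (iii) the degree of the map $I(-V|-R)_\nu$ is the topological quantity $d(-V)\bmod 2$; asserting that this degree equals $\delta(-Y,\xi)+1$ is equivalent to the unproven claim (i), so (i) and (iii) are circular as stated. (Also, the degree-$0$ identification of the tensor factor $I(-\Sigma_2\times S^1|-\Sigma_2)_{\eta_2}\cong\C_{(0)}$ does not follow from your (i): in the paper this is case (2) of \Cref{lemma:surfacesupport}, which is deduced from case (3), which in turn is proved using the grading of the contact class of the tight $S^3$ computed by the very argument of \Cref{contact-grading}.)

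Second, the almost complex structure in (iii) is not ``natural'' in the way you claim, and the identification of the boundary plane field is not bookkeeping. Each $\gamma_i$ meets the dividing set of $\partial(S\times[-1,1])$ twice, so by Kanda its contact framing is $+1$ relative to the page framing; hence attaching the handles to $V$ with page framing is contact $(+1)$-surgery, not a Weinstein/Stein handle attachment, and $V$ does not directly inherit a compatible almost complex structure. The paper's \Cref{lemma:enemy} must first pass to the dual handle decomposition of $-V$ (Ding--Geiges), where the handles become contact $(-1)$-surgeries, apply Weinstein's theorem there, and then---this is the substantive step---identify the induced $2$-plane field on $Y\#R\times S^1$ with $\xi\#\xi_R$ using the open book definition of $\xi$, the product disk decomposition of $S\times[-1,1]$, and the results of Giroux, Torisu, and Honda--Kazez--Mati\'c. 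Without this identification, the $\delta$-calculus of \eqref{eq:dual} and \eqref{eq:additivity} has no almost complex filling of the correct plane field to apply to, so the final equality with $\delta(-Y,\xi)+1$ is not reached.
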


\begin{proof}
Since the contact invariant $\Theta^\#(Y,\xi)$ was defined in \Cref{contact-invariant} as the image of $\theta(\mathcal{D}_R)\in I(-Y\#R\times S^1 | -R)_{\alpha\sqcup\eta}$ under the composition of some change-of-genus excision isomorphisms, we begin by computing the $\Z/2$-grading of $\theta(\mathcal{D}_R)$. Baldwin and Sivek construct a compact symplectic 4-manifold $Z$ as a relatively minimal Lefschetz fibration over $D^2$ with boundary $-R\times_h S^1$ \cite[Lemma 3.5]{baldwin-sivek-su2}, and a cobordism $-V$ with oriented boundary $R\times_h S^1 \sqcup -(Y\# R\times S^1)$  \cite[Proposition 2.16]{baldwin-sivek-su2}. Then, according to \cite[Lemma 3.6]{baldwin-sivek-su2} the element $\theta(\mathcal{D}_R)$ can be realized as the image under $I(-V|-R)_{\nu}$ of the relative Donaldson invariant $I(Z|-R)_{\widetilde{\nu}}\in I(-R\times_h S^1 | -R)_{\alpha\sqcup\eta}$ associated to the 4-manifold $Z$, and restricted to the appropriate eigenspaces. Therefore, if $X$ is the compact 4-manifold
\[
	X = Z \cup_{-R\times_h S^1} (-V),
\]
then we may regard it as a cobordism $X: \emptyset \to -Y\# R\times S^1$, and the mod 2 grading of the relative invariant $\theta(\mathcal{D}_R) = I(-V|-R)_\nu\left( I(Z|-R)_{\widetilde \nu}\right)$ is given by
\begin{align*}
	d(X) &\equiv \frac{1}{2}\left(\chi(X)+\sigma(X)+b_1(-Y\# R\times S^1)+1\right)  \pmod 2.
\end{align*}
To relate $d(X)$ to $\delta(-Y,\xi)$ we must endow $X$ with an almost complex structure that restricts to $- (Y \# R \times S^1)$ with 2-plane field $\xi'$. Here $\xi'$ is as follows. Let $\xi_R$ be the 2-plane field on $R \times S^1$ given by the tangencies to $R$. Then $\xi'$ is the 2-plane field on the connected sum $(Y,\xi)\# (R\times S^1, \xi_R)$, and $(-(Y\# R \times S^1),\xi')$ is the result of reversing the orientation of the underlying 3-manifold. (Order matters: see \Cref{rmk:orientations}.)

Let $(R \times_h S^1,\tau_{R})$ be a 3-manifold with 2-plane field where $\tau_R$, like $\xi_R$, consists of the tangencies of the surface fibers. As $Z$ has the structure of a Lefschetz fibration, it admits an almost complex structure compatible with $\tau_R$ at the boundary. Thus to show that $X$ admits an almost complex structure of the required sort, it suffices to show that $-V$ has an almost complex structure that restricts to the prescribed plane fields $\xi'$ and $\tau_R$ at its boundary components. This is described in \Cref{lemma:enemy}.

Proceeding, by \eqref{eq:dual} we have
\begin{align*}
	d(X)  \equiv \delta(-(Y\# R\times S^1),\xi') + 1 & \equiv \delta(Y\# R\times S^1,\xi') + b_1(Y\# R\times S^1)\\
	 &  \equiv \delta(Y\# R\times S^1,\xi') + b_1(Y) + 1 \pmod 2 .
\end{align*}

Now we may use \eqref{eq:additivity} and \eqref{eq:dual} to compute
\begin{align*}
	d(X)  &  \equiv  \delta(Y,\xi) + \delta(R\times S^1,\xi_R) + b_1(Y) + 1 \\ & \equiv \delta(-Y,\xi) +  \delta(R\times S^1,\xi_R) \pmod 2 .
\end{align*}
We compute that $\delta(R\times S^1,\xi_R)\equiv 1 \pmod 2$ as follows: the 4-manifold $R\times D^2$ clearly has an almost complex structure which restricts to $\xi_R$ on the boundary, and thus
\begin{align*}
	\delta(S^1\times R,\xi_R) & \equiv \frac{1}{2}\left(\chi(R\times  D^2) + \sigma(R\times D^2) + b_1(R\times S^1)-1\right)\\
					      & \equiv \frac{1}{2}\left(2-2g + 0 + 2g + 1 - 1 \right) \equiv 1 \pmod 2.
\end{align*}
From this, we conclude that the $\Z/2$-grading of $\theta(\mathcal{D}_R)$ is given by
\[
	\text{gr}\left(\theta(\mathcal{D}_R)\right) = \delta(-Y,\xi) + 1 \pmod 2.
\]
Now, to compute the $\Z/2$-grading of the framed contact invariant $\Theta^\#(Y,\xi)\in I^\#(-Y)$ it suffices to compute the mod 2 degree of the map
\begin{equation}
	\Phi\circ \Psi^{2}\circ \cdots \circ \Psi^g\circ F_{\phi}.\label{eq:comp}
\end{equation}
The maps $\Phi$ and $F_{\phi}$ are induced by diffeomorphisms and have degree $0$. Recall that the map $\Psi^i$ is induced from $I(-W^i)_\nu$ by identifying $I(-\Sigma_2\times S^1|-\Sigma_2)_{\eta_2}$ with a copy of $\C$. In \Cref{lemma:surfacesupport} below we show that this copy of $\C$ is supported in grading $0$. Thus
\[
	\text{deg}( \Psi^i ) = d(-W^i) \pmod 2
\]
for $i=2,\ldots,g$. Next, recall that $W^i:Y_1\to Y_2$ where $Y_1=Y\# \Sigma_i\times S^1$ and $Y_2$ is the disjoint union of $ Y\# \Sigma_{i-1}\times S^1$ and $\Sigma_2\times  S^1$. Further, $W^i$ is formed by gluing the three pieces, as in \eqref{eq:w1}, \eqref{eq:w2}, \eqref{eq:w3}; it follows easily that $\chi(W^i)=0$ and $\sigma(W^i)=0$. Thus
\begin{align*}
	d(-W^i)  & \equiv \frac{1}{2}\left(\chi(-W^i) + \sigma(-W^i) +b_1(Y_2)-b_1(Y_1) + b_0(Y_2) - b_0(Y_1)  \right) \\
		     & \equiv  \frac{1}{2}\left(0 + 0 + (b_1(Y)+2i+4)-(b_1(Y)+2i+1) + (2) - (1)  \right) \\
		     & \equiv \frac{1}{2}( 4) \equiv 0 \pmod 2 .
\end{align*}
Thus the degree of $\Psi^i$ is $0 \pmod 2$, and consequently the composition \eqref{eq:comp} also has degree $0 \pmod 2$. Finally, we conclude that the mod 2 grading of $\Theta^\#(Y,\xi)$ is the same as that of $\theta(\mathcal{D}_R)$, completing the proof.
\end{proof}

\begin{lemma} \label{lemma:enemy}
	The cobordism $-V$ has an almost complex structure which restricts to the 2-plane fields $\xi\#\xi_R$ and $\tau_R$ at the boundary components.
\end{lemma}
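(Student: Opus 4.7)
My approach is to construct an almost complex structure on $V$ directly, and then transfer it to $-V$. The transfer is routine because the real 2-plane field $T(\partial W)\cap J(T(\partial W))$ that an almost complex structure $J$ on a 4-manifold $W$ induces on $\partial W$ is insensitive both to reversing the global orientation of $W$ and to replacing $J$ by $-J$; in particular, an almost complex structure on $V$ restricting to $\tau_R$ and $\xi\# \xi_R$ at the two boundary components immediately yields one on $-V$ realizing the same 2-plane fields.

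To build the structure on $V$, the plan is to glue $V$ to $-Z$ along $R\times_h S^1$, where $Z$ is the symplectic Lefschetz fibration of \cite[Lemma 3.5]{baldwin-sivek-su2} with $\partial Z = -R\times_h S^1$. The conjugate almost complex structure on $-Z$ is compatible with its orientation and restricts to the fiber-tangent plane field $\tau_R$ on $\partial(-Z) = R\times_h S^1$. The glued 4-manifold $(-Z)\cup V$ has outgoing boundary $Y\# R\times S^1$, and the key claim is that it admits a Stein (or at least almost complex) structure extending that on $-Z$ and realizing $\xi\# \xi_R$ at the boundary. This is precisely the 4-dimensional realization of the Giroux correspondence applied to the open book $(S,h,\mathbf{c})$ combined with the trivial open book on the $T$-factor: the 2-handles of $V$ are attached to $\partial(-Z)$ along the curves $\gamma_i$ with framings coming from $\partial(S\times[-1,1])$, which are exactly the Weinstein framings appropriate to the fiber-foliated boundary of $-Z$, and the open-book construction ensures the resulting convex boundary carries $\xi\# \xi_R$. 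Restricting the almost complex structure of $(-Z)\cup V$ to $V$ then gives $\tau_R$ at the gluing interface (by continuity from $-Z$) and $\xi\# \xi_R$ on the outgoing end, as required.

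The crux is the extension of the almost complex structure across the 2-handles of $V$ with the correct convex-boundary behavior. Since the $\gamma_i$ and their framings are exactly those prescribed by the open book $(S,h,\mathbf{c})$, this is handled by a careful application of Giroux's theorem, combined with the observation that the $\xi_R$ factor on the $T\times S^1$ summand is realized by the fiber-tangent plane field of the Lefschetz fibration extended along the trivial $S^1$ direction. The main technical delicacy is confirming that the fiber-tangent plane field $\tau_R$ at $\partial(-Z)$, although not itself a contact structure, still plays the role of a convex boundary for the Weinstein 2-handle attachments; this is essentially built into Baldwin and Sivek's explicit model for $V$ in \cite{baldwin-sivek-su2} and can be verified directly from their construction.
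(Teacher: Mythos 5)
There is a genuine gap, and it occurs at both of the load-bearing steps of your argument. First, the ``routine transfer'' from $V$ to $-V$ is not routine: an almost complex structure $J$ determines an orientation of the underlying 4-manifold (and the conjugate $-J$ determines the \emph{same} orientation, since conjugation on $\C^2$ is orientation-preserving), so an almost complex structure compatible with $V$ does not yield one compatible with $-V$, and your claim that ``the conjugate almost complex structure on $-Z$ is compatible with its orientation'' is false for the same reason. The orientation is not a cosmetic issue here: the lemma is used to compute $\delta$ via $\chi(X)+\sigma(X)$ for $X=Z\cup_{-R\times_h S^1}(-V)$, and $\delta(Y,\xi)$ and $\delta(-Y,\xi)$ differ (see \eqref{eq:dual}); building the structure on $(-Z)\cup V$ instead would compute the wrong quantity even if it could be done.

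Second, the framing assertion on which your construction of the structure on $V$ rests is incorrect. After Legendrian realization of the $\gamma_i$ in the convex surface $F=\partial(S\times[-1,1])$, Kanda's formula shows the $F$-framing is $+1$ relative to the contact framing, so the 2-handles of $V$ effect contact $(+1)$-surgery; these are precisely \emph{not} Weinstein handle attachments (which require framing $-1$ relative to the contact framing), so no Stein/Weinstein or Giroux-type argument equips $V$ with the desired structure on top of $-Z$. The paper's proof gets around exactly this point by turning the cobordism around: the dual handle decomposition of $(-V, Y\# R\times S^1)$, with 2-handles attached along the belt spheres, is the cobordism of contact $(-1)$-surgery by \cite{ding-geiges}, and then \cite{weinstein} produces the almost complex structure on $-V$ directly. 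Finally, even granting an almost complex structure, identifying the outgoing plane field with $\xi\#\xi_R$ is not automatic from ``the open-book construction'': on the $Y$ summand it is the definition of $M(S,h,\mathbf{c})$, but on the $R\times S^1$ summand it requires the product-disk decomposition argument together with the uniqueness and homotopy statements of Giroux, Torisu, and Honda--Kazez--Mati\'c, which your sketch does not supply.
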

We remark that the proof of \Cref{lemma:enemy} below is similar to that of \cite[Proposition 2.2]{baldwin-monoid}.  
\begin{proof}
The construction of the cobordism 
\[
	V: R \times_h S^1 \to Y\# R \times S^1
\]
is described by Baldwin-Sivek in \cite[Proposition 2.16]{baldwin-sivek-su2} and we begin our proof recalling the most important details of their construction. First, $V$ is the result of attaching 4-dimensional 2-handles to the outgoing end of $R \times_h S^1\times [0,1]$ along the closed curves
\[
	\gamma_i=(c_i\times\{1\}) \cup (\partial c_i \times [-1,1]) \cup (h(c_i)\times\{-1\}).
\]
The framing is given by the surface pushoff in $F:=\partial\left(S\times [-1,1]\right)$ as in \eqref{gamma}. To endow $V$ with an almost complex structure, notice that the 2-plane field $\tau_R$ restricted to $S\times [-1,1]\subset R\times_h S^1$ is homotopic to the contact structure on $S\times [-1,1]$ that is $[-1,1]$-invariant (with rounded corners), and that makes $F=\partial(S\times [-1,1])$ a convex surface with dividing set isotopic to $\Gamma_S=\partial S\times\{0\}$. We therefore assume that $\tau_R$  is in fact equal to this contact 2-plane field on a neighborhood of $S\times [-1,1] \subset R\times_h S^1$. Furthermore, by the Legendrian realization principle \cite{kanda,honda-tightI}, after a perturbation of the convex surface $F$, the curves $\gamma_i$ are Legendrian. A result of Kanda \cite{kanda} computes the contact framing relative to the surface framing as $-1/2$ times the number of times the curve $\gamma_i$ intersects the dividing set. In this case the curves intersect $\partial S\times\{0\}$ twice, so each $\gamma_i$ has contact framing $+1$. Thus, if $\mathbb{L}$ denotes the framed link obtained as the union of the $\gamma_i$ curves with $F$-framing, the cobordism $V$ is the 2-handle cobordism associated to $+1$ contact surgery on the Legendrian link $\mathbb{L}\subset R\times_h S^1$. 

Consider the handle decomposition for $(-V,Y\# R \times S^1)$ obtained as the dual handle decomposition of $(V,R\times_h S^1)$. That is, consider the handle decomposition for $(-V,Y\# R \times S^1)$ consisting of 2-handles attached along the belt spheres of the original 2-handles. As explained in \cite[Proposition 8]{ding-geiges}, this dual handle decomposition is the surgery cobordism of $-1$ contact surgery on a Legendrian link.  Here, the relevant 2-plane fields are unchanged on the two three-manifolds.  We emphasize our current viewpoint: we begin with $R\times_h S^1$ with the 2-plane field $\tau_R$ which is contact on a neighborhood of $S\times [-1,1]$; it is not important for us that we have not prescribed a contact structure on all of $R\times_h S^1$.   It is shown in \cite{weinstein} that $-V$ has an almost complex structure compatible with $\tau_R$ on $-R \times_h S^1$ on the boundary and some 2-plane field on $-(Y \# R \times S^1)$.  Note that this is the 2-plane field that arises from contact $+1$-surgery on $\mathbb{L}$.  Therefore, our goal is to identify this homotopy class of 2-plane fields with that of $\xi \# \xi_R$. 

To understand the result of contact $+1$-surgery on $\mathbb{L}$, decompose $R \times_h S^1 = R\times [-1,3]/\sim$ along $F$ to obtain the following two components:
\[
	P = S \times [-1,1], \qquad Q = (R\times_h S^1)\setminus \text{int}\left( S\times [-1,1]\right).
\]
Similarly, identifying $S^1=\R/2\pi\Z=[\pi/2,3\pi/2]\cup [-\pi/2,\pi/2]$, decompose the solid torus $D^2\times S^1$ attached along a tubular neighborhood of $\gamma_i$ into two pieces:
\[
H^-_i=D^2\times [\pi/2,3\pi/2],\qquad H^+_i=D^2\times [-\pi/2,\pi/2].
\]
The outgoing boundary component of $V$ is therefore the result of attaching 3-dimensional 2-handles to $P$ and $Q$, and then gluing the resulting pair of 3-manifolds with 2-sphere boundary together. The specifics of how these 2-handles are glued to each piece will be recounted below as needed.  The key observation is that cutting along $\gamma_i$ and attaching $H^-_i$ and $H^+_i$ as {\em contact} 2-handles is the same as doing {\em contact} $+1$-surgery on the Legendrian curves denoted by $\gamma_i$.  This can be seen by simply observing that both operations correspond to removing a standard neighborhood of $\gamma_i$ and gluing in a tight solid torus with two parallel dividing curves of the same slope.  

We must now identify the 2-plane field on $Y\# R\times S^1$ resulting from these contact 2-handle attachments. First, the plane field on $Y\setminus B^3\subset Y\# R\times S^1$ is obtained by attaching the 2-handles $H^-_i$ to $P = S \times [-1,1]$, now viewed as contact 2-handles. The result is $(Y\setminus B^3,\xi)$, by the very definition of what it means for $(S,h,\mathbf{c})$ to be an open book decomposition as in \Cref{sec::contactinvt}. Next, to identify the plane field on the $R\times S^1$ summand, write $Q'= (R\times S^1)\setminus \text{int}(S \times [-1,1])$ and recall that $S \times [-1,1]$ admits a product disk decomposition. Then $R\times S^1 \setminus B^3$ may be identified with the union of $Q'$ and 3-dimensional 2-handles attached along the curves $$\gamma_i'=c_i \times \{-1\} \cup \partial c_i \times [-1,1] \cup c_i\times \{1\},\;\;i\in\{1,\ldots,b_1(S)\}.$$ Furthermore, the diffeomorphism $\psi:Q\to Q'$ given by \begin{equation}\label{eq:maptpart} \psi(x,t)=\begin{cases} (h(x),-1)&\text{ if } t=-1\\ (x,t)&\text{ else} \end{cases}\end{equation} extends to a diffeomorphism %from \cite[Proposition 2.16]{baldwin-sivek-su2},
\begin{equation}
	Q \cup \bigcup_{i=1}^{b_1(S)} H^+_{i}  \to R\times S^1 \setminus B^3=Q' {\cup} \bigcup_{i=1}^{b_1(S)}D^2\times D^1. \label{eq:diffw2handles}
\end{equation}
with each $H^+_i$ attached to $Q$ along $\gamma_i$ as before. Note that under the diffeomorphism \eqref{eq:diffw2handles} the dividing set of $F$ is fixed and each $\gamma_i$ goes to the double of $c_i$, which we have denoted by $\gamma_i'$. See \Cref{fig:swdivset}.

Having arranged that $\tau_R$ restricted to a neighborhood of $S\times [-1,1]\subset R\times_h S^1$ is contact, the resulting plane field on the outgoing end $R\times S^1\setminus B^3\subset V$ is obtained from $(Q,\tau_R|_{Q})$ by attaching the $H^+_i$'s viewed as contact 2-handles. Via the diffeomorphism \eqref{eq:diffw2handles}, this is the same as the result of filling $(Q',\xi_R|_{Q'})$ with the contact 2-handles obtained from the product disk decomposition for $S\times[-1,1]$ in terms of the disks $c_i\times [-1,1]$. Therefore, to describe the resulting plane field it is enough to identify the contact structure $\eta$ on $S\times[-1,1]$ that induces on $S\times\{0\}$ the same characteristic foliation and dividing set as $\tau_R|_{S\times\{0\}}$. This contact structure is associated to a product disk decomposition and its existence is guaranteed by a result of Giroux~\cite[Sub-Lemma III.3.3]{giroux-91}. Furthermore, $\eta$ is unique up to isotopy by a result of Torisu~\cite{torisu} and is homotopic to the 2-plane field obtained as the tangent space of $S$ by a result of Honda-Kazez-Matic~\cite[Lemma 5.2]{HKM-hyperbolic}. We have therefore extended $(Q',\xi_R|_{Q'})$ to $S\times [-1,1]$ via $TS$ and we have thus shown that the 2-plane field obtained as the restriction of the almost complex structure of $V$ to the $R\times S^1$ summand is homotopic to $\xi_R$.
\end{proof}

\begin{figure}
\centering
\includegraphics[scale=0.75]{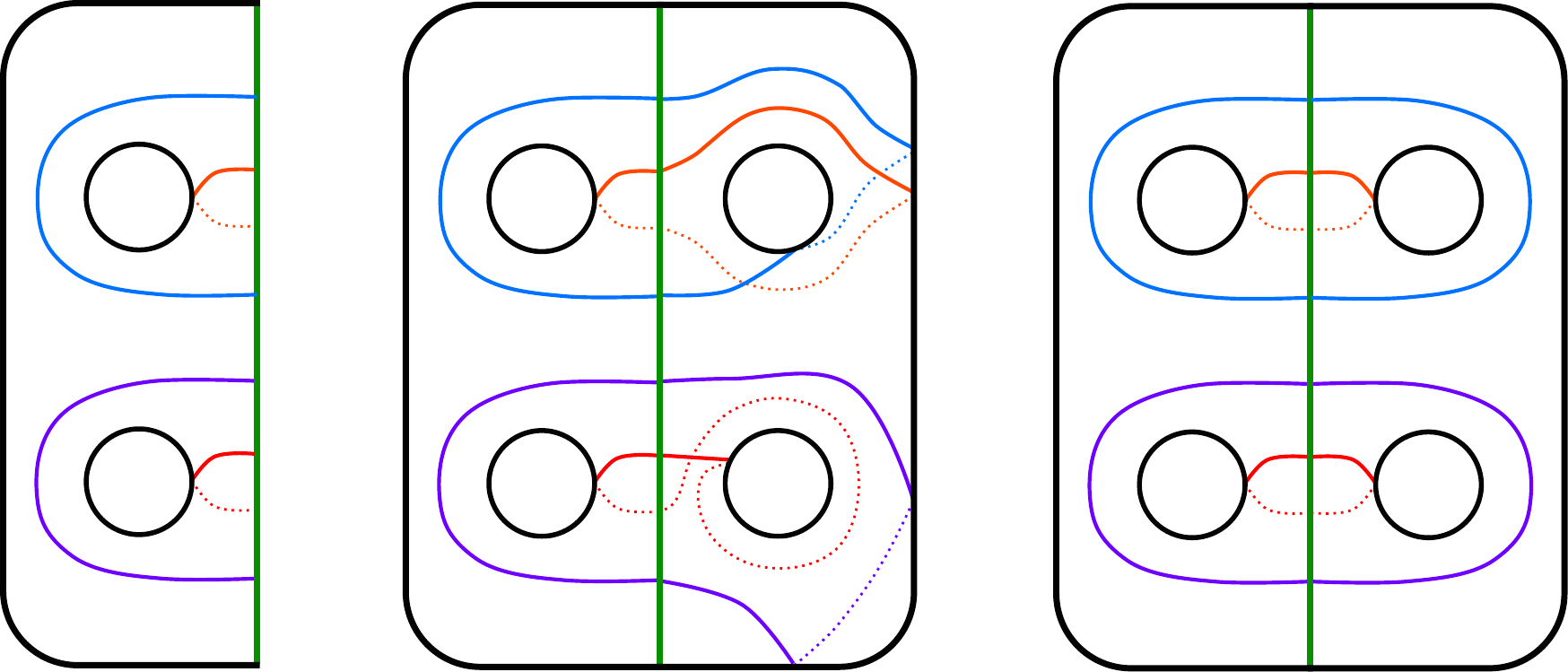}
\caption{On the left is the surface $S$ with the basis arcs $c_i$; in this example, $S$ has one boundary component (as we always arrange) and has genus 2. Center is the surface $F=\partial ( S\times [-1,1])$, viewed as the double of $S$, after rounding corners. This surface is a convex boundary for $S\times [-1,1]$ and has dividing set the boundary component of $S$. The loops $\gamma_i$ are shown. On the right is the result of applying the diffeomorphism \eqref{eq:diffw2handles}; the dividing set is fixed, while $\gamma_i$ is transformed into the double of $c_i$. } \label{fig:swdivset} 
\end{figure}

\begin{lemma}\label{lemma:surfacesupport} Let $\omega\subset \Sigma_g\times S^1$ be any 1-manifold such that $(\Sigma_g\times S^1,\omega)$ is admissible. Then $I(\Sigma_g\times S^1 | \Sigma_g)_{\omega}$ is 1-dimensional and is supported in mod 2 grading 0.
\end{lemma}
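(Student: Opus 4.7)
The plan is to establish both statements together by reducing the general case to Case (i) of Proposition~\ref{prop:1diminst} (i.e., $\omega$ an $S^1$-factor), where Lemma~\ref{lemma:1dim0mod2} already applies.

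First, I would use the mapping class group of $\Sigma_g\times S^1$ to reduce to standard representatives. This group contains both $\mathrm{Diff}(\Sigma_g)$, which acts on $H_1(\Sigma_g;\Z/2)\subset H_1(\Sigma_g\times S^1;\Z/2)$ as $\mathrm{Sp}(2g,\Z/2)$ and fixes $[S^1]$, and the ``shear'' diffeomorphisms $(x,t)\mapsto (x,\,t+f(x))$ with $f:\Sigma_g\to S^1$, which send a primitive class $[\gamma]\in H_1(\Sigma_g;\Z/2)$ to $[\gamma]+\langle f^\ast,\gamma\rangle[S^1]$. Any admissible $\omega$ is therefore diffeomorphic to one of the three standard representatives of Proposition~\ref{prop:1diminst}, and since Floer homology and the $(2g-2,2)$-eigenspace depend only on $[\omega]\in H_1(Y;\Z/2)$ and pull back under diffeomorphisms preserving the $\Z/2$-grading, the 1-dimensionality statement follows.

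Next, for the $\Z/2$-grading, Case (i) is exactly Lemma~\ref{lemma:1dim0mod2}. For cases (ii) and (iii), I would construct a Kronheimer--Mrowka excision cobordism analogous to the $W^i$ from the proof of Proposition~\ref{contact-grading}, relating the $(2g-2,2)$-eigenspace for $\omega$ to the one for an $S^1$-factor. Concretely, take the disjoint union of $(\Sigma_g\times S^1,\omega)$ with a second copy $(\Sigma_g\times S^1,S^1)$, excise along a matched pair of copies of $\Sigma_g$, and choose the embedded 2-cycle $\nu$ in the cobordism so that its restriction to the outgoing ends represents a pair of $S^1$-factor classes. By the excision theorem of \cite{km-excision}, this induces an isomorphism between 1-dimensional tensor products of $(2g-2,2)$-eigenspaces, so the mod 2 grading for $\omega$ is determined by that for two copies of the $S^1$-factor (both $0$ by Lemma~\ref{lemma:1dim0mod2}), shifted only by the mod 2 degree of the cobordism.

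The remaining step is to verify that this cobordism has mod 2 degree zero. As in the proof of Proposition~\ref{contact-grading}, the cobordism is assembled from two product pieces and a saddle crossed with a surface, so $\chi(W)=\sigma(W)=0$; equation~\eqref{eq:dmod2} then reduces the mod 2 degree to a comparison of $b_0$ and $b_1$ on the two ends, and since both ends are disjoint unions of two copies of $\Sigma_g\times S^1$, these Betti numbers agree and $d(W)\equiv 0\pmod 2$. The main obstacle is the topological bookkeeping required to arrange $\nu$ so that the outgoing ends genuinely do carry the $S^1$-factor class; this is a routine exercise analogous to the one carried out in Section~\ref{sec:contact}.
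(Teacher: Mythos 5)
Your reduction to the three standard representatives and your treatment of case (i) via \Cref{lemma:1dim0mod2} match the paper, but the excision step you propose for cases (ii) and (iii) does not work as described, and it conceals the real difficulty. First, Floer--Kronheimer--Mrowka excision (Theorem 7.7 of \cite{km-excision}) requires the excision surfaces to have \emph{odd} intersection with $\omega$; if you excise along copies of the fiber $\Sigma_g$, this fails outright in case (ii), where $\omega=\eta$ lies in a fiber and meets $\Sigma_g$ in zero points. Second, even in case (iii) the required $\nu$ does not exist: $\nu$ must be a $\Z/2$-homology in the cobordism between the incoming bundle class $[\omega\sqcup S^1]$ and the outgoing one, and the excision cobordism (built from product pieces and a saddle times the excision surface) carries $H_1(\Sigma_g;\Z/2)$ injectively, so the class $[\eta]$ cannot be discarded; cutting and regluing along fibers only reshuffles the $S^1$-direction of the bundle class and can never convert an $\eta$-type bundle into an $S^1$-factor bundle. (Relatedly, cutting two copies of $\Sigma_g\times S^1$ along one fiber each and regluing crosswise produces a single connected $\Sigma_g\times S^1$, not the two outgoing copies your degree count assumes.) So the claim that the grading in cases (ii) and (iii) is ``determined by two copies of the $S^1$-factor, both $0$ by \Cref{lemma:1dim0mod2}'' is exactly the step that fails: no formal excision argument transfers the grading from the $S^1$-factor bundle to bundles whose class is nontrivial along $\Sigma_g$.

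This is why the paper introduces a genuinely new input for case (iii): the generator of $I(-R\times S^1\,|\,{-R})_{\alpha\sqcup\eta}$ is identified with the contact class $\theta(\mathcal{D}_R)$ of the tight $S^3$, realized as a relative Donaldson invariant of a Lefschetz fibration capped off with $-V$, and its grading is computed to be $\delta(-S^3,\xi_{\mathrm{std}})+1\equiv 0$ using the Stein filling $B^4$ (this uses the machinery of \Cref{contact-grading} and \cite{baldwin-sivek-su2}, not just \Cref{lemma:1dim0mod2}). Case (ii) is then deduced from case (iii) by the excision isomorphism of \cite[Proposition 7.9]{km-excision},
\[
I(\Sigma_g\times S^1\,|\,\Sigma_g)_{\alpha\sqcup\eta}\otimes I(\Sigma_1\times S^1\,|\,\Sigma_1)_{\alpha_1\sqcup\eta_1}\longrightarrow I(\Sigma_g\times S^1\,|\,\Sigma_g)_{\eta},
\]
which excises along \emph{vertical} tori $\gamma\times S^1$ dual to $\eta$ (so the odd-intersection hypothesis holds), with the auxiliary piece a genus-one closure whose bundle is again of $\alpha\sqcup\eta$ type; there the two $S^1$-factors recombine to an even class while the $\eta$-curves merge, which is why the target carries the $\eta$-bundle. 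If you want to avoid the contact-geometric input you would need some other absolute grading computation for an $\eta$-type bundle (for instance a genus-reducing induction with a $T^3$ base case, where all admissible bundles are diffeomorphic), but the fiber-excision cobordism you describe cannot supply it.
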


\begin{proof}
	It suffices to consider the following three cases:
	\begin{enumerate}
		\item $\omega=:\alpha$ is an $S^1$-factor;
		\item $\omega=:\eta\subset \Sigma_g \times \{0\}$ is an embedded closed curve with $[\eta]\neq 0 \in H_1(\Sigma_g;\Z/2)$;
		\item $\omega=\alpha\sqcup \eta$ is a union of the previous two cases.
	\end{enumerate}
	Indeed, the instanton homology depends only on $\Sigma_g\times S^1$ and $[\omega]\in H_1(\Sigma_g\times S^1;\Z/2)$, and any such class that comes from an admissible pair is mod 2 homologous to one of the three types of curves listed.
	
	Case (1) was dealt with in \Cref{lemma:1dim0mod2}.
	
	We next address case (3).  First, assume $g \geq 2$.  Consider $Y=S^3$ with the standard tight contact structure $\xi$. Consider an open book decomposition with pages $S$ of genus 1, and attach an auxiliary surface $T$ with genus $g-1\geq 1$. Then $R=S\cup T$ has genus $g\geq 2$. Recall we have a compact 4-manifold $X$ with boundary $-Y\# R\times S^1=-R\times S^1$. Then
\[
	\theta(\mathcal{D}_R) = I(X|-R)_{\widetilde{\nu}} \in  I(-R\times S^1 | -R)_{\alpha\sqcup\eta}.
\]
As this $\theta(\mathcal{D}_R)$ represents the contact invariant of the tight contact structure on $S^3$, it also generates the 1-dimensional space $I(-R\times S^1 | -R)_{\alpha\sqcup\eta}$. We also computed that the grading of $\theta(\mathcal{D}_R)$ is given by $\delta(-S^3,\xi)+1 \pmod 2$. The contact manifold $(S^3,\xi)$ bounds the Stein 4-ball $B^4\subset \C^2$, and from this we compute $\delta(-S^3,\xi)+1\equiv \delta(S^3,\xi)\equiv 0 \pmod 2$. Thus $\theta(\mathcal{D}_{R})$ and hence $I(-R\times S^1 | -R)_{\alpha\sqcup\eta}$ are supported in grading $0 \pmod 2$.

Now in the construction of $\theta(\mathcal{D}_{R})$, $\alpha$ is an $S^1$-factor of $R\times S^1$, but $\eta$ was chosen to be in the interior of $T$. However, after applying an automorphism of $R$ we may obtain any desired curve $\eta$ as in (2). This completes the case of (3) for genus $g\geq 2$. The genus 1 case is simpler, as all pairs $(T^3,[\omega])$ with $[\omega]\in H_1(T^3;\Z/2)$ that arise from admissible pairs $(T^3,\omega)$ are related by diffeomorphisms.

Finally, we consider case (2). The proof in \cite[Proposition 7.9]{km-excision} that $I(\Sigma_g\times S^1|\Sigma_g)_{\eta}$ is 1-dimensional uses an excision cobordism $W$ which induces an isomorphism
\[
	I(W)_\nu:I(\Sigma_g\times S^1|\Sigma_g)_{\alpha\sqcup \eta} \otimes  I(\Sigma_1 \times S^1|\Sigma_1)_{\alpha_1\sqcup \eta_1}\longrightarrow I(\Sigma_{g}\times S^1|\Sigma_g)_{\eta}.
\]
The domain of $I(W)_\nu$ is supported in grading $0 \pmod 2$ by case (3), and the degree of $I(-W)_\nu$ is computed similarly as in the proof of \Cref{contact-grading} to be $d(W)\equiv 0 \pmod 2$. This completes case (2).
\end{proof}

%-------------------------------------------------------------------------------------------------
%-------------------------------------------------------------------------------------------------
\section{Proof of \Cref{dim-Isharp}}\label{sec:dim-Isharp}
In this section we compute the framed instanton homology of 3--manifolds obtained as surgery on $S^3$ along instanton L-space knots.  The arguments below are standard and have appeared in various contexts in instanton, Heegaard, and monopole Floer homology, but we include them for completeness.  

We begin by giving a lower bound on the dimension of $\Isharp$ for negative surgeries on a knot with $\maxsl  \geq 0$.  This first result is a slight refinement of \cite[Proof of Theorem 1.10]{baldwin-sivek-su2}.
\begin{proposition}\label{lower-bound}
Let $K$ be a knot in $S^3$ with $\maxsl(K) = s \geq 0$ and let $n > 0$.  Then the dimension of $\Isharp (S^3_{-n}(K))$ in degree 0 is at least $s + n$, and the dimension of $\Isharp(S^3_{-n}(K))$ in degree 1 is at least $s$.  
\end{proposition}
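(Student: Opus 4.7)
The plan is to produce $s+n$ Stein fillings of contact structures on $Y := S^3_{-n}(K)$ with pairwise distinct canonical classes, apply \Cref{thm:baldwin-sivek-independence} to obtain $s+n$ linearly independent Baldwin--Sivek contact invariants in $I^\#(-Y)$, verify via \Cref{contact-grading} that all of these lie in absolute $\Z/2$-grading $0$, transfer the resulting bound to $I^\#(Y)$ using the grading-preserving duality $I^\#(-Y) \cong I^\#(Y)^*$ (which holds since $b_1(Y) = 0$), and finally obtain the degree $1$ bound from the Euler characteristic identity $\chi(I^\#(Y)) = |H_1(Y;\Z)| = n$ for rational homology spheres.

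To build the fillings, I would start from a transverse representative $\mathcal{T}$ of $K$ with $sl(\mathcal{T}) = s$ and pass to a Legendrian approximation $\mathcal{L}_+$ satisfying $tb(\mathcal{L}_+) - r(\mathcal{L}_+) = s$; reversing the orientation of $K$ (which preserves $\maxsl$, since self-linking is orientation-independent) and repeating the construction yields a second Legendrian $\mathcal{L}_-$ of $K$ with $tb(\mathcal{L}_-) + r(\mathcal{L}_-) = s$. Combining positive and negative Legendrian stabilizations of $\mathcal{L}_\pm$, I would realize each of the $s+n$ integers $r \in \{1-n-s,\, 3-n-s,\, \ldots,\, s+n-1\}$ permitted by the Bennequin-type bounds $tb \pm r \le s$ at $tb = 1-n$ as the rotation number of a Legendrian representative $\mathcal{L}^{(r)}$ of $K$ with $tb(\mathcal{L}^{(r)}) = 1-n$. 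Weinstein $2$-handle attachment to $B^4$ along each $\mathcal{L}^{(r)}$, with framing $tb(\mathcal{L}^{(r)}) - 1 = -n$, produces a Stein filling $(W_r, J_r)$ of a contact structure $\xi_r$ on $Y$; the Chern class $c_1(J_r) \in H^2(W_r;\R)$ evaluates to $r$ on the generator of $H_2(W_r;\Z)$, so the $c_1(J_r)$ are pairwise distinct.

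With the fillings in hand, \Cref{thm:baldwin-sivek-independence} gives linear independence of the $\Theta^\#(\xi_r) \in I^\#(-Y)$. By \Cref{contact-grading} combined with \eqref{eq:dual},
\[
  \text{gr}\bigl(\Theta^\#(\xi_r)\bigr) \equiv \delta(-Y,\xi_r) + 1 \equiv b_1(Y) - \delta(Y,\xi_r) \pmod 2,
\]
and computing $\delta(Y,\xi_r)$ from \eqref{delta-4d} using $\chi(W_r) = 2$, $\sigma(W_r) = -1$, and $b_1(Y) = 0$ gives $\delta(Y,\xi_r) \equiv 0 \pmod 2$, so every $\Theta^\#(\xi_r)$ is supported in $I^\#(-Y)_{(0)}$. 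The grading-preserving duality then yields $\dim I^\#(Y)_{(0)} \ge s+n$, and the Euler characteristic relation $\dim I^\#(Y)_{(0)} - \dim I^\#(Y)_{(1)} = n$ forces $\dim I^\#(Y)_{(1)} \ge s$. The main obstacle I foresee is the combinatorial step of realizing every admissible rotation number as a Legendrian at $tb = 1-n$; everything else is a direct application of the machinery of \Cref{sec:contact}.
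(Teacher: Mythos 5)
Your overall strategy coincides with the first half of the paper's proof of \Cref{lower-bound}: Stein fillings of $S^3_{-n}(K)$ from Legendrian surgery on representatives with $tb=1-n$, linear independence of the contact classes via \Cref{thm:baldwin-sivek-independence}, the grading computation (your $\delta(Y,\xi_r)=0$, hence grading $0$, agrees with the paper), duality for rational homology spheres, and the Euler characteristic to get the degree $1$ bound. The genuine gap is the step you flag at the end: the claim that \emph{every} rotation number $r$ with $|r|\le s+n-1$ of the right parity is realized at $tb=1-n$, for every $n>0$. The Bennequin-type inequalities $tb\pm r\le s$ are necessary but not sufficient for realizability, and your construction only yields rotation numbers in the two stabilization cones below $\mathcal{L}_+$ and $\mathcal{L}_-$: measuring everything with respect to one fixed orientation of $K$, stabilizing $\mathcal{L}_-$ (with $tb+r=s$) gives $r\in[\,s-2tb(\mathcal{L}_-)-n+1,\; s+n-1\,]$ and stabilizing $\mathcal{L}_+$ (with $tb-r=s$) gives $r\in[\,1-n-s,\; 2tb(\mathcal{L}_+)+n-1-s\,]$. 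These cover the full admissible range only once $n\ge s-tb(\mathcal{L}_+)-tb(\mathcal{L}_-)$, and $tb(\mathcal{L}_\pm)$ can be far smaller than $s$ (one even needs $tb(\mathcal{L}_\pm)\ge 1-n$ to stabilize down to $1-n$ at all), so the count of $s+n$ distinct Chern classes is unjustified for small $n$. A secondary caution: if $\mathcal{L}_-$ is literally $\mathcal{L}_+$ with reversed orientation, it determines the \emph{same} Stein structure with the same $c_1$, since both $r$ and the preferred generator of $H^2(W_{-n}(K))$ change sign; all rotation numbers must be compared against a fixed orientation, which is why the paper works with a single Legendrian and its \emph{conjugate} Stein structures rather than two orientations.

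The paper closes exactly this gap by a two-step argument: it runs the Stein-filling count only for $n\gg 0$, where one Legendrian $\Lambda'$ with $tb+r=s$, its $tb(\Lambda')+n$ stabilizations, and their conjugates do produce at least $s+n$ distinct canonical classes, and it then deduces the statement for arbitrary $n>0$ by a downward induction using the surgery exact triangle of \Cref{exact-triangle} for the triple $(S^3, S^3_{-n-1}(K), S^3_{-n}(K))$: exactness gives $\dim \Isharp(S^3_{-n}(K)) \ge \dim \Isharp(S^3_{-n-1}(K)) - 1$, which combined with $|\chi(\Isharp(S^3_{-n}(K)))|=n$ recovers the graded lower bounds. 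Your proposal becomes complete if you add this induction (or otherwise justify realizing all admissible rotation numbers at $tb=1-n$, which is not a general fact and is not proved in your sketch).
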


Before proving the proposition, we need a bit more background.  We fix an orientation of $K$ throughout.  Let $W_{-n}(K)$ denote the trace of the surgery, that is  the result of attaching a $-n$-framed 2-handle to $B^4$, and let $\alpha$ denote the Poincar\'e dual of the co-core of the 2-handle, with orientation induced by that of $K$. Then, if $\Lambda$ is a Legendrian representative of $K$ with $tb(\Lambda) = -n+1$, the trace $W_{-n}(K)$ admits a Stein structure $J_\Lambda$ with $c_1(J_\Lambda) = r(\Lambda) \alpha$ by \cite[Proposition 2.3]{gompf}, where $r(\Lambda)$ is the rotation number. Note that if we reverse orientation on $\Lambda$, the sign of $r(\Lambda)$ changes, as does that of $\alpha$, so $c_1(J_\Lambda)$ does not change.  Recall that a Legendrian knot $\Lambda$ admits a positive (resp. negative) stabilization $\Lambda_+$ (resp. $\Lambda_-$) with $tb(\Lambda_\pm)=tb(\Lambda)-1$ and $r(\Lambda_\pm)=r(\Lambda)\pm 1$. 

\begin{proof}
We begin with the case that $n \gg 0$.  Choose a Legendrian representative $\Lambda'$ of $K$ with $tb(\Lambda') + r(\Lambda') = s$, and notice that $s$ is actually at least one since $tb + r$ is always odd.  Further, $r(\Lambda') \geq 0$, since otherwise a transverse pushoff of $\Lambda'$ would have self-linking number greater than $s$.  It is possible to obtain a Legendrian representation $\Lambda$ with $tb(\Lambda) = -n +1 $ by applying $tb(\Lambda') + n - 1$ successive stabilizations of any sign. Consequently, there are $tb(\Lambda') + n$ different ways of stabilizing $\Lambda'$, each yielding a different rotation number.  Then the values of these rotation numbers are
\[
r(\Lambda') - tb(\Lambda') - n + 1 + 2k, \  0 \leq k \leq tb(\Lambda') + n - 1.
\]
Consider this collection of $tb(\Lambda') + n - 1$ Stein structures induced on $W_{-n}(K)$, together with their conjugates.  For $n \gg 0$, this produces a set of at least $s + n$ Stein structures whose canonical classes are all distinct in $H^2(W_{-n}(K);\mathbb{R})$.  Consequently, by \Cref{thm:baldwin-sivek-independence}, the associated contact elements $\Theta^\#(\xi_1), \ldots, \Theta^\#(\xi_{n+s})$ are linearly independent elements in $\Isharp(-S^3_{-n}(K))$.  

Using \Cref{contact-grading}, we compute that the grading of $\Theta^\#(\xi_i)$ is $0$ for all $1 \leq i \leq n+s$, and thus the dimension of $\Isharp(S^3_{-n}(K))$ in degree 0 is at least $n + s$, since orientation-reversal preserves the graded dimension of instanton Floer homology for rational homology spheres.  However, by \cite[Corollary 1.4]{scaduto}, we have $|\chi(\Isharp(S^3_{-n}(K)))| = n$.  This now gives the required lower bound on the dimension of $\Isharp$ in degree 1.   

The case for arbitrary $n > 0$ follows by induction, applying the surgery exact triangle relating the instanton homologies of $S^3$, $S^3_{-n-1}(K)$ and $S^3_{-n}(K)$, together with the euler characteristic constraint.  
\end{proof}

\begin{remark}
Note that the above argument can be repeated verbatim for Heegaard Floer homology with $\Z/2$ coefficients (replacing the work of Baldwin-Sivek on the linear independence of the contact classes with the earlier work of Plamenevskaya \cite{plamenevskaya}).  To compute the Heegaard Floer homology of surgery on an L-space knot with $\C$ coefficients, one may apply the work of \cite{ozsvath-szabo-lens} or \cite{ozsvath-szabo-integer}, both of which hold over $\C$. 
\end{remark}

\begin{proof}[Proof of \Cref{dim-Isharp}] First, let $m$ be a positive integer such that $S^3_m(K)$ is an L-space.  As $\chi(\Isharp(S^3_m(K))) = m$, then $\Isharp(S^3_m(K)) \cong \C^m_{(0)}$. Then, to prove the theorem we will consider surgery coefficients $n$ in three different intervals: $n>m$, $2g(K) - 1 \leq n < m$, and $n<2g(K)-1$. We will additionally take advantage of the fact that the exact triangle in \Cref{exact-triangle} implies $$\dim \Isharp(S^3_i(K)) \leq \dim \Isharp(S^3_j(K)) +\dim \Isharp(S^3_k(K)),$$ for $(i,j,k)$ any permutation of the triple $(\infty,n,n+1)$.

For the case $n>m$, proceed inductively considering the surgery exact triangle corresponding to the triple $(S^3, S^3_{n}(K),S^3_{n+1}(K))$, with base case given by $(S^3, S^3_{m}(K),S^3_{m+1}(K))$. Then, the inequality $$\dim\Isharp(S^3_{n+1}(K))\leq \dim\Isharp(S^3_n(K))+\dim\Isharp(S^3)=n+1,$$ together with the relation $\chi\left(\Isharp(S^3_{n+1}(K))\right)=n+1$ shows that $\dim\Isharp_0(S^3_{n+1}(K))= n+1$ and $\dim\Isharp_1(S^3_{n+1}(K))= 0$ as sought.

Next, if there exists an integer $n$ satisfying $2g(K) - 1 \leq n < m$, we consider the surgery exact triangles corresponding to the triples $(S^3, S^3_{n-1}(K),S^3_{n}(K))$, with base case given by $(S^3, S^3_{m-1}(K),S^3_{m}(K))$. Let $W_{n}(K)$ denote the result of attaching an $n$-framed 2-handle to $B^4$, and $F$ denote the oriented closed surface obtained as the union of a Seifert surface for $K$ in $S^3$ and the core of the 2--handle.  Then, the self-intersection of $F$ satisfies  $$F\cdot F= n >2g(K) -2.$$ The adjunction inequality from \cite[Theorem 1.1]{kronheimer-mrowka-embedded2} then implies that the cobordism map from $\Isharp(S^3)$ to $\Isharp(S^3_{n}(K))$ vanishes and so $$\dim\Isharp(S^3_n(K)) = \dim\Isharp(S^3_{n-1}(K))+\dim\Isharp(S^3),$$ which together with the euler characteristic relation shows that $S^3_{n}(K)$ is an L-space. We then see that $S^3_n(K)$ is an L-space if $n \geq 2g(K) - 1$.

Now, for $n<2g(K) - 1$, a combination of the surgery exact triangle for the triple $(S^3, S^3_{n-1}(K),S^3_{n}(K))$ and the fact that $S^3_{2g(K) - 1}(K)$ is an L-space together with an induction that decreases the index gives the desired upper bounds on the graded dimension of $\Isharp(S^3_{n-1}(K))$.  On the other hand, applying the surgery exact triangle in conjunction with \Cref{lower-bound} and an induction that increases the index gives the opposite inequality for the graded dimension of the framed instanton Floer homology of the desired surgery.  This completes the proof.
\end{proof}

\begin{comment}
\begin{remark}
It is natural to ask how \Cref{dim-Isharp} extends to rational surgeries.  The same arguments can be easily used to show that 
\[
\dim \Isharp(S^3_{1/n}(T_{p,q})) \geq \begin{cases} 4ng - 2n - 1 & n > 0, \\ -4ng - 1 & n < 0,  \end{cases}   
\]
using Legendrian surgery descriptions of these manifolds (with several components).  However, the dimension of $\HFhat$ is significantly bigger for these manifolds.  Perhaps a more clever application of the surgery exact triangle would allow for the correct bounds.  
\end{remark}
\end{comment}

%-------------------------------------------------------------------------------------------------
%-------------------------------------------------------------------------------------------------
\section{Mod 4 gradings in Framed Instanton Homology}\label{sec:mod4}
In this section, we revisit our previous work in the setting of the $\Z/4$-grading on instanton Floer homology.  Recall that the $\Z/2$-grading on $I^\#(Y)$ may be lifted to an absolute $\Z/4$-grading \cite[Section 7.3]{scaduto}. 
For a cobordism $W$ equipped with a closed surface $S$, the induced map on $I^\#$ is homogeneous with respect to this grading, and in \cite[Proposition 7.1]{scaduto}, the shift is computed to be 
\[
	\deg I^\#(W)_S = d(W) + 2[S]\cdot[S] \pmod 4.
\]
In particular, if $W$ is {\emph{spin}}, then the degree is given by $d(W) \pmod 4$.

The maps in the triangle \Cref{exact-triangle} are not all induced by cobordisms with mod 2 nullhomologous surfaces. In particular, their $\Z/4$-degrees are not determined by the integers \eqref{eq:ddeg} alone. Instead, by \cite[Section 7.5]{scaduto}, we have 

\begin{lemma}\label{deg-triangle}
 In the exact triangle \eqref{exact-triangle}, exactly one of the 2-handle cobordisms is not spin. For a spin cobordism, $W$, the degree shift agrees with $d(W)\pmod 4$. The degree of the non-spin cobordism map is determined by the condition that the sum of the degrees of the three maps is congruent to $-1 \pmod 4$.
\end{lemma}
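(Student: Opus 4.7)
The plan is to treat the three assertions of the lemma in sequence. First I would name the three 2-handle cobordisms in \eqref{eq:ntriangle}: $W_3\colon Y \to Y_n(K)$ attaching an $n$-framed 2-handle along $K$, $W_1\colon Y_n(K) \to Y_{n+1}(K)$ attaching a $(-1)$-framed 2-handle along the meridian $\mu$ of $K$, and $W_2\colon Y_{n+1}(K)\to Y$ attaching a 2-handle along a suitable curve (for instance the dual knot $K^{*}\subset Y_{n+1}(K)$) so as to close up the triangle. Assembling the three together produces a composite cobordism $Z=W_3\cup W_1\cup W_2\colon Y\to Y$ whose intersection form is encoded by the linking matrix of the three attaching curves inside $Y$.

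To decide spin-ness, I would extract the intersection form of each individual $W_i$ by a lattice computation inside $Z$, taking into account that the generator of $H_2(W_i;\Z)$ may represent a rational multiple of the 2-handle disk, scaled by the order in $H_1$ of the attaching circle in its incoming 3-manifold. Using the $2\times 2$ linking block of $W_3\cup W_1$, with diagonal entries $n,-1$ and off-diagonal $1$, a direct calculation shows that the generator of $H_2(W_1;\Z)$ embeds as $-e_3+ne_1$ and has self-intersection $-n(n+1)$, which is always even, so $W_1$ is always spin. The generator of $H_2(W_3;\Z)$ has self-intersection $n$, whence $W_3$ is spin iff $n$ is even; a parallel calculation for $W_2$, whose attaching 2-handle is dual to that of $W_3$, shows it is spin iff $n$ is odd. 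In every case exactly one of the three cobordisms fails to be spin, and Wu's formula $w_2\cdot x\equiv x\cdot x\pmod 2$ then gives the spin claim.

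For either of the two spin cobordisms I would apply the formula $\deg I^{\#}(W)_{S}\equiv d(W)+2[S]\cdot[S]\pmod{4}$ from \cite[Proposition 7.1]{scaduto}. Wu's formula in a spin 4-manifold forces $[S]\cdot[S]\equiv 0\pmod{2}$ for every $[S]\in H_2(W;\Z/2)$, so the correction $2[S]\cdot[S]$ vanishes modulo 4 and the $\Z/4$-degree reduces to $d(W)\pmod{4}$ independently of the chosen surface $\nu$.

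For the remaining non-spin cobordism the correction $2[\nu]\cdot[\nu]\pmod 4$ is a priori known only up to the ambiguity $\{0,2\}$, and I would pin it down using the triangle structure: the exact triangle in $I^{\#}$ is a distinguished triangle of $\Z/4$-graded complex vector spaces arising as a mapping cone, and its connecting morphism carries an intrinsic degree shift of $+1$, forcing the cyclic sum of the three degrees in the triangle to equal $-1\pmod 4$. Combined with the two spin degrees from the previous paragraph, this determines the degree of the non-spin map uniquely. The main obstacle I expect is the lattice bookkeeping in the spin analysis; translating between the integral generator of $H_2(W_i;\Z)$ and the rational Seifert surface of a torsion attaching circle requires care with signs and orders, and the cleanest route appears to be working inside $Z$, where the intersection form is immediately given by the linking matrix of the attaching curves.
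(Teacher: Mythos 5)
The paper offers no proof of this lemma at all: it is quoted wholesale from \cite[Section 7.5]{scaduto}, so your write-up is necessarily an independent argument rather than a variant of one in the text. Your analysis of the first two claims is essentially correct. The lattice bookkeeping checks out: the generator of $H_2$ of the middle cobordism $Y_n(K)\to Y_{n+1}(K)$ is indeed $-e_3+ne_1$ in your basis, of square $-n(n+1)$, the generator for $Y\to Y_n(K)$ has square $n$, and the one for $Y_{n+1}(K)\to Y$ has square $-(n+1)$, so exactly one of the three is non-spin (the one identified after the lemma in the paper), and for a spin cobordism the correction $2[S]\cdot[S]$ in the degree formula of \cite[Proposition 7.1]{scaduto} vanishes mod $4$. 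Two small points you should make explicit: evenness of the integral intersection form does not in general give $w_2=0$ for a manifold with boundary, so you need the observation that in each of these cobordisms the attaching circle generates $H_1$ of the incoming end, whence $H_1(W;\Z)=0$ and $H_2(W;\Z/2)$ is exactly the mod $2$ reduction of $H_2(W;\Z)$, so the single-generator check together with the Wu-type identity $\langle w_2(W),[S]\rangle=[S]\cdot[S]$ for embedded closed surfaces suffices; and the degenerate cases $n=0,-1$, where one of the three manifolds has $b_1=1$ and the order-of-torsion argument degenerates, deserve a separate (easy) check.

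The genuine gap is in the third claim, which is where the content lies. The condition $\deg f_1+\deg f_2+\deg f_3\equiv -1\pmod 4$ is not a formal consequence of ``the triangle is a distinguished triangle and the connecting morphism shifts degree by $+1$'': the connecting morphism here \emph{is} one of the three cobordism maps whose $\Z/4$-degree you are trying to pin down, and exactness of a triangle of $\Z/4$-graded vector spaces with homogeneous maps imposes no universal constraint on the sum of their degrees; the identification of one group with the mapping cone of the opposite map, \emph{by an isomorphism of known degree}, is an input coming from the proof of the exact triangle, not from its statement. The honest route is chain-level: in the triangle-detection argument one has null-homotopies $H_i$ of the composites $f_{i+1}f_i$, of degree $\deg f_i+\deg f_{i+1}+1$, and the criterion requires $f_{i+2}H_i+H_{i+1}f_i$ to be chain homotopic to a nonzero multiple of the identity, hence of degree $0\pmod 4$, which forces the sum of the three degrees to be $-1\pmod 4$. (Alternatively, one can bypass the formal argument and compute the degree of the non-spin map directly from $d(W)+2[\nu]\cdot[\nu]$ using the specific surface $\nu$ appearing in the triangle maps.) As written, your appeal to the mapping-cone degree shift assumes essentially the statement being proved.
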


In this triangle, the non-spin cobordism is $S^3\to S^3_n(K)$ when $n$ is odd, and $S^3_{n+1}(K)\to S^3$ when $n$ is even. The numbers $d(W)$ are easily computed and we obtain the following.

\begin{corollary}\label{deg-triangle-S3}
	Let $n\in \Z $. The $\Z/4$-degrees of the cobordism maps in the exact triangle for $(S^3,S_n^3(K),S_{n+1}^3(K))$ depend on $n$ in the following way:

\begin{center}
    \begin{tikzpicture}[scale=1.87]
        \node (A) at (0,0) {$I^\#(S_n^3(K))$};
        \node (B) at (2,0) {$I^\#(S_{n+1}^3(K))$};
        \node (C) at (1,-1) {$I^\#(S^3)$};
        \node (D) at (1,-.45) {$n\geq 2$};
        \node (D) at (1,-.25) {$n$ even};
        \path[->,font=\scriptsize]
        (A) edge node[above]{degree 0} (B)
        (B) edge node[right]{\;\;degree 2} (C)
        (C) edge node[left]{degree 1\;\;} (A);
    \end{tikzpicture}
\quad
    \begin{tikzpicture}[scale=1.8]
        \node (A) at (0,0) {$I^\#(S_n^3(K))$};
        \node (B) at (2,0) {$I^\#(S_{n+1}^3(K))$};
        \node (C) at (1,-1) {$I^\#(S^3)$};
        \node (D) at (1,-.45) {$n\geq 1$};
        \node (D) at (1,-.25) {$n$ odd};
        \path[->,font=\scriptsize]
        (A) edge node[above]{degree 0} (B)
        (B) edge node[right]{\;\;degree 0} (C)
        (C) edge node[left]{degree 3\;\;} (A);
    \end{tikzpicture}
\end{center}
\begin{center}
    \begin{tikzpicture}[scale=1.8]
        \node (A) at (0,0) {$I^\#(S_{0}^3(K))$};
        \node (B) at (2,0) {$I^\#(S_{1}^3(K))$};
        \node (C) at (1,-1) {$I^\#(S^3)$};
        \node (D) at (1,-.35) {$n=0$};
        \path[->,font=\scriptsize]
        (A) edge node[above]{degree 2} (B)
        (B) edge node[right]{\;\;degree 2} (C)
        (C) edge node[left]{degree 3\;\;} (A);
    \end{tikzpicture}
\quad
    \begin{tikzpicture}[scale=1.8]
        \node (A) at (0,0) {$I^\#(S_{-1}^3(K))$};
        \node (B) at (2,0) {$I^\#(S_{0}^3(K))$};
        \node (C) at (1,-1) {$I^\#(S^3)$};
        \node (D) at (1,-.35) {$n=-1$};
        \path[->,font=\scriptsize]
        (A) edge node[above]{degree 3} (B)
        (B) edge node[right]{\;\;degree 2} (C)
        (C) edge node[left]{degree 2\;\;} (A);
    \end{tikzpicture}
\end{center}
\begin{center}
    \begin{tikzpicture}[scale=1.87]
        \node (A) at (0,0) {$I^\#(S_n^3(K))$};
        \node (B) at (2,0) {$I^\#(S_{n+1}^3(K))$};
        \node (C) at (1,-1) {$I^\#(S^3)$};
        \node (D) at (1,-.45) {$n\leq -2$};
        \node (D) at (1,-.25) {$n$ even};
        \path[->,font=\scriptsize]
        (A) edge node[above]{degree 0} (B)
        (B) edge node[right]{\;\;degree 3} (C)
        (C) edge node[left]{degree 0\;\;} (A);
    \end{tikzpicture}
\quad
    \begin{tikzpicture}[scale=1.8]
        \node (A) at (0,0) {$I^\#(S_n^3(K))$};
        \node (B) at (2,0) {$I^\#(S_{n+1}^3(K))$};
        \node (C) at (1,-1) {$I^\#(S^3)$};
        \node (D) at (1,-.45) {$n\leq -3$};
        \node (D) at (1,-.25) {$n$ odd};
        \path[->,font=\scriptsize]
        (A) edge node[above]{degree 0} (B)
        (B) edge node[right]{\;\;degree 1} (C)
        (C) edge node[left]{degree 2\;\;} (A);
    \end{tikzpicture}
\end{center}
\end{corollary}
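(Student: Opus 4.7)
The plan is to combine \Cref{deg-triangle} with an explicit computation of signatures and Betti numbers for each of the three 2-handle cobordisms appearing in the exact triangle. By \Cref{deg-triangle}, once we identify which 2-handle cobordism is non-spin, the $\Z/4$-degrees of the two spin cobordisms are obtained directly from $d(W) \pmod 4$ via \eqref{eq:ddeg}, and the degree of the non-spin cobordism is determined by the requirement that all three degrees sum to $-1 \pmod 4$. The discussion preceding the corollary already pins down the non-spin cobordism as $W_C : S^3 \to S^3_n(K)$ when $n$ is odd and as $W_B : S^3_{n+1}(K) \to S^3$ when $n$ is even, so no additional work is needed to identify it.

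The three cobordisms in Kirby-theoretic terms are: $W_C$, the trace of $n$-surgery on $K$, with intersection form $(n)$ and $\sigma(W_C)=\mathrm{sign}(n)$; $W_A : S^3_n(K) \to S^3_{n+1}(K)$, a $-1$-framed 2-handle along a meridian of $K$; and $W_B : S^3_{n+1}(K) \to S^3$, a $0$-framed 2-handle along a meridian of $K$, which undoes the $(n{+}1)$-surgery by slam-dunk. To compute $\sigma(W_A)$ and $\sigma(W_B)$, I will use Novikov additivity at rational homology sphere boundaries. The composite $W_C \cup W_A$ is a 2-handlebody on $B^4$ with linking matrix $\left(\begin{smallmatrix} n & 1 \\ 1 & -1 \end{smallmatrix}\right)$, which has signature $0$ for $n \geq 0$, $-1$ for $n=-1$, and $-2$ for $n \leq -2$. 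The full composite $W_C \cup W_A \cup W_B$ has $3 \times 3$ linking matrix $\left(\begin{smallmatrix} n & 1 & 1 \\ 1 & -1 & 0 \\ 1 & 0 & 0 \end{smallmatrix}\right)$, of determinant $1$ (confirming the outgoing boundary is $S^3$) and signature $-1$ for every $n\in\Z$. Wherever the intermediate 3-manifold is a rational homology sphere, Novikov additivity at that boundary recovers the individual signatures.

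With the signatures in hand, I apply \eqref{eq:ddeg}, which for a 2-handle cobordism between connected 3-manifolds reduces to $d(W) = -\tfrac{3}{2}(1+\sigma(W)) + \tfrac{1}{2}(b_1(Y_2)-b_1(Y_1))$, to compute $d(W_C), d(W_A), d(W_B) \pmod 4$ in each of the six regimes ($n \geq 2$ even, $n \geq 1$ odd, $n=0$, $n=-1$, $n \leq -2$ even, $n \leq -3$ odd). In each regime, the $\Z/4$-degree of a spin cobordism equals $d(W)\pmod 4$, while the degree of the non-spin cobordism follows from the sum-to-$-1$ rule. Substituting the resulting values into the six diagrams reproduces the tables in the statement; as a sanity check, one verifies in each case that the three stated degrees indeed sum to $3\equiv -1\pmod 4$.

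The main obstacle is the careful treatment of $n=0$ and $n=-1$, where the intermediate 3-manifold $S^3_0(K)$ has $b_1=1$. In these cases the $d$-formula acquires an extra $\pm \tfrac{1}{2}$ from the boundary $b_1$ term, and Novikov additivity across the non-rational-homology-sphere boundary is not directly available. I will handle this by splitting the composite cobordism only at the rhs intermediate boundary and arguing via the long exact sequence of the 2-handle attachment (using that the attaching meridian is an infinite-order generator of $H_1(S^3_0(K))=\Z$) that the remaining component piece has $b_2=0$ and therefore trivial signature, which suffices to pin down $\sigma(W_A)$ and $\sigma(W_B)$ in those two special cases.
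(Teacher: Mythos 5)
Your proposal is correct and follows the paper's (essentially unwritten) argument exactly: invoke \Cref{deg-triangle}, take the identification of the non-spin handle ($S^3\to S^3_n(K)$ for $n$ odd, $S^3_{n+1}(K)\to S^3$ for $n$ even) from the discussion preceding the corollary, compute $d(W)$ for the two spin cobordisms from \eqref{eq:ddeg} using the standard Kirby descriptions and signature additivity, and recover the third degree from the sum-to-$-1$ rule; I checked that your signatures and $b_1$ corrections reproduce all six tables. One small correction that only simplifies your argument: Novikov additivity holds for gluing along any closed boundary component (no rational homology sphere hypothesis is needed), so the special treatment of $n=0,-1$ is unnecessary; moreover the piece attached along the meridian generating $H_1(S^3_0(K))\cong\Z$ actually has $b_2=1$ rather than $0$, but its generator comes from $H_2$ of the boundary, on which the intersection form vanishes, so the conclusion $\sigma=0$ stands.
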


Use the shorthand $(d_0,d_1,d_2,d_3)$ for a complex $\Z/4$-graded vector space with dimension $d_i$ in grading $i \pmod 4$. We obtain the following refinement of \Cref{dim-Isharp}:

\begin{corollary}\label{Z4-grading} Let $K$ be a non-trivial knot in $S^3$ such that $S^3_n(K)$ is a lens space for a positive integer $n$. Then, setting $g=g(K)$, as $\Z/4$-graded vector spaces we have
\[
I^\#(S_{n}^3(K)) \cong
\begin{cases}
 (g+ \lfloor -n/2 \rfloor,g-1,g+\lfloor -(n+1)/2 \rfloor,g) &n \leq -1 \\
 (g-1,g-1,g,g) & n = 0\\
(g,g-\lceil n/2\rceil,g-1,g-1-\lfloor n/2 \rfloor)  & 1 \leq n \leq 2g-1 \\
(\lceil (n+1)/2\rceil, 0, \lfloor(n-1)/2\rfloor,0)  & n \geq 2g - 1.
\end{cases}
\]
\end{corollary}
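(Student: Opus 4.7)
The strategy mirrors the proof of \Cref{dim-Isharp}: establish a base case and propagate through integer surgery slopes via the exact triangle \Cref{exact-triangle}, but now tracking the full $\Z/4$-graded dimension using the degree shifts tabulated in \Cref{deg-triangle-S3}. Write $p$ for the lens space surgery slope supplied by the hypothesis. This hypothesis plays two roles: it provides the base case $I^\#(S^3_p(K))$, and since lens spaces are instanton L-spaces it ensures that \Cref{dim-Isharp} applies and pins down the total $\Z/2$-graded dimensions at every integer slope.

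The first step is to compute the $\Z/4$-graded $I^\#$ of the lens space $S^3_p(K)$. Since it is an L-space of order $p$, the comparison of total dimension with the $\Z/2$-graded Euler characteristic forces all generators into even $\Z/4$-gradings. To determine the split between grading $0$ and grading $2$, one can inductively compute $I^\#(L(p,1))$ via the exact triangle for the unknot, seeded by $I^\#(S^3) = \C_{(0)}$ and using the degree shifts of \Cref{deg-triangle-S3}; the resulting pattern is $I^\#(L(p,1)) \cong \C^{\lceil(p+1)/2\rceil}_{(0)} \oplus \C^{\lfloor(p-1)/2\rfloor}_{(2)}$. A separate check, using that the part of the $SO(3)$ representation variety contributing to $I^\#$ depends only on the order of $H_1$ (up to grading shifts), shows the answer is independent of the second lens space parameter, thereby establishing the $n \geq 2g-1$ case of the formula at $n = p$.

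With the base case in hand, the proof propagates to other integer surgery coefficients $n$ by induction on $|n-p|$. In each step the relevant exact triangle, combined with the known $\Z/2$-graded dimensions of the adjacent surgeries from \Cref{dim-Isharp}, the one-dimensional target $I^\#(S^3) = \C_{(0)}$, and the $\Z/4$-degree shifts from \Cref{deg-triangle-S3}, forces a unique $\Z/4$-graded refinement of the $\Z/2$-graded splitting. Concretely, the connecting map $I^\#(S^3) \to I^\#(S^3_n(K))$ either vanishes, in which case the two adjacent surgery groups agree in $\Z/4$-graded dimension up to a shift by a single generator in a specific grading; or it is nonzero, in which case the single generator of $I^\#(S^3)$ contributes nontrivially in a specific $\Z/4$-grading determined by the degree of the connecting map. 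Which alternative holds is pinned down by matching the prescribed $\Z/2$-graded totals on both sides.

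The main obstacle is the case-by-case bookkeeping at the three parameter transitions: $n$ crossing $2g-1$, where $S^3_n(K)$ ceases to be an L-space and generators appear in odd $\Z/4$-gradings for the first time; and $n = 0$ and $n = -1$, where the non-spin cobordism in the triangle changes positions and the degree shifts jump according to \Cref{deg-triangle-S3}. At each transition one must verify that the new generators appearing via the connecting map land in exactly the $\Z/4$-grading dictated by the claimed formula, including matching the parity-dependent floor and ceiling expressions. This is a finite but careful verification; once completed, the four cases of the formula follow directly.
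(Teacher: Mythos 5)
Your overall strategy coincides with the paper's: take the $\Z/4$-graded $I^\#$ of the lens space surgery as the base case, then propagate through all integer slopes via the exact triangle, using the $\Z/2$-graded totals from \Cref{dim-Isharp} together with the degree shifts of \Cref{deg-triangle-S3} to force the graded refinement at each step (and implicitly that $K$ realizes the Bennequin bound so \Cref{dim-Isharp}'s argument applies). That propagation logic is sound and is exactly what the paper does.

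The weak point is your base case. The unknot-triangle induction only computes $I^\#(L(p,1))$, whereas the lens space $S^3_p(K)$ is some $L(p,q)$ with $q$ generally not $\pm 1$ (it cannot be $L(p,1)$ for a non-trivial knot), so the claimed independence of the second parameter is doing real work. The justification offered --- that ``the part of the $SO(3)$ representation variety contributing to $I^\#$ depends only on the order of $H_1$ (up to grading shifts)'' --- is not a proof: $I^\#$ is not read off from the representation variety alone, and the $\Z/4$-gradings of the generators are governed by index/$\rho$-type data that a priori does depend on $q$, which is precisely what must be controlled. The paper sidesteps this by quoting the known computation $I^\#(Y)\cong(\lceil (n+1)/2\rceil,0,\lfloor (n-1)/2\rfloor,0)$ for any lens space $Y$ with $|H_1(Y;\Z)|=n$. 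If you replace your heuristic with a citation of (or an actual computation giving) this fact for all $L(p,q)$, the rest of your argument goes through as in the paper.
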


Note that this agrees with the computations of $I^\#(\Sigma(2,3,7))$ and $I^\#(\Sigma(2,3,5))$ from \cite[Corollary 1.6, 1.7]{scaduto}, which were obtained using an entirely different method.  The extra constraint that ``L-space'' be replaced by ``lens space'' is only because we do not know the $\Z/4$-gradings for a general L-space.

\begin{proof} For a lens space $Y$ with $|H_1(Y;\Z)|=n$ we have $$I^\#(Y) \cong (\lceil (n+1)/2\rceil, 0, \lfloor(n-1)/2\rfloor,0).$$ Since $K$ admits a positive lens space surgery, it is a Heegaard Floer L-space knot, and hence is fibered and strongly quasipositive.  Therefore, $K$ realizes the transverse Bennequin bound.  The result now follows by repeating the arguments in \Cref{dim-Isharp} while using \Cref{deg-triangle-S3} to track the $\Z/4$-gradings in the surgery exact triangle.  
\end{proof}

\begin{question} 
Suppose that $\dim I^\#(Y)=|H_1(Y;\Z)|$, that is,  $Y$ is an instanton L-space. Is it necessarily true that $I^\#(Y) \cong (\lceil (n+1)/2\rceil, 0, \lfloor(n-1)/2\rfloor,0)$ where $n=|H_1(Y;\Z)|$?
\end{question}

Finally, we prove \Cref{contact-gradings-Z4}, namely that the contact invariant is not necessarily homogeneous with respect to the $\Z/4$-grading.  
\begin{proof}[Proof of \Cref{contact-gradings-Z4}]
	Consider $Y=\Sigma(2,3,7)=S_{-1}^3(T_{2,3})$. Let $W$ be the surgery cobordism from $-Y$ to $S^3$. We have shown that there are contact structures $\xi_1$ and $\xi_2$ induced by different Stein structures on $W$, such that $\Theta^\#(\xi_1)$ and $\Theta^\#(\xi_2)$ are linearly independent in $I^\#(-Y)$. Each of these is mapped to the generator $\Theta^\#(\xi_{std})\in I^\#(S^3)$ by the cobordism map $I^\#(W)$. Suppose $\Theta^\#(\xi_1)$ and $\Theta^\#(\xi_2)$ are homogeneous in the $\Z/4$-grading. Then the gradings must be the same, as the cobordism map $I^\#(W)$ is $\Z/4$-homogeneous. However, as computed in \Cref{Z4-grading}, $I^\#(-Y)$ has rank 0 or 1 in each $\Z/4$-grading, a contradiction.
\end{proof}

%-------------------------------------------------------------------------------------------------
%-------------------------------------------------------------------------------------------------
\section{An Example}\label{sec:trefoil}
In this section we will use the methodology implemented in the proof of \Cref{dim-Isharp} to show that \Cref{IsharpisHFhat} also holds for $1/n$-surgery along the right-handed trefoil for $n > 0$. 

\begin{proposition}[Corollary 1.7 of \cite{scaduto}]\label{thm:1ntrefoil}
For $n > 0$, the $\Z/2$-graded framed instanton Floer homology of $1/n$-surgery on the right-handed trefoil is given by 
\begin{equation}
\Isharp(S^3_{1/n}(T_{2,3}))\cong\C^n_{(0)}\oplus\C^{n-1}_{(1)}.\label{eq:oneovern}
\end{equation}
In particular, this is isomorphic to $\HFhat(S^3_{1/n}(T_{2,3}))$.  
\end{proposition}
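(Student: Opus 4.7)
The plan is to prove \Cref{thm:1ntrefoil} by induction on $n \geq 1$, in close analogy with the proof of \Cref{dim-Isharp}. The base case $n = 1$ is immediate: $S^3_1(T_{2,3}) = \Sigma(2,3,5)$ is the Poincaré homology sphere, an instanton L-space, so $I^\#(S^3_1(T_{2,3})) \cong \C_{(0)}$, matching \eqref{eq:oneovern}.

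For the inductive step, fix $n \geq 1$, set $Y = S^3_{1/n}(T_{2,3})$, and consider the dual knot $K' \subset Y$, that is, the core of the surgery solid torus. A peripheral computation identifies the meridian of $K'$ with $\mu + n\lambda$ and a longitude with $\lambda$, where $\mu, \lambda$ are the meridian and Seifert longitude of $T_{2,3} \subset S^3$. From this one reads off that $0$- and $1$-surgery on $K'$ in $Y$ produce $S^3_0(T_{2,3})$ and $S^3_{1/(n+1)}(T_{2,3})$ respectively, while $\infty$-surgery recovers $Y$. Applying \Cref{exact-triangle} to $K' \subset Y$ with $k = 0$ then yields the exact triangle
\[
I^\#(S^3_0(T_{2,3})) \to I^\#(S^3_{1/(n+1)}(T_{2,3})) \to I^\#(S^3_{1/n}(T_{2,3})) \to I^\#(S^3_0(T_{2,3})).
\]
From \Cref{dim-Isharp} at $n = 0$, $I^\#(S^3_0(T_{2,3})) \cong \C_{(0)} \oplus \C_{(1)}$; combined with the inductive hypothesis $I^\#(S^3_{1/n}(T_{2,3})) \cong \C^n_{(0)} \oplus \C^{n-1}_{(1)}$, the triangle produces the upper bound $\dim I^\#(S^3_{1/(n+1)}(T_{2,3})) \leq 2n+1$. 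Computing the $\Z/2$-degrees of the three cobordism maps via \eqref{eq:dmod2}, together with the constraint $\chi(I^\#(S^3_{1/(n+1)}(T_{2,3}))) = \pm 1$ coming from $|H_1| = 1$, refines this to the graded upper bound $\dim_0 \leq n+1$ and $\dim_1 \leq n$ with $\dim_0 - \dim_1 = 1$.

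For the matching lower bound the plan is to mirror \Cref{lower-bound} via \Cref{thm:baldwin-sivek-independence}: construct $2n+1$ Stein fillings of $-S^3_{1/(n+1)}(T_{2,3})$ with distinct Chern classes, producing $2n+1$ linearly independent contact invariants in $I^\#(-S^3_{1/(n+1)}(T_{2,3}))$ and hence the desired lower bound on the dimension. I would realize $-S^3_{1/(n+1)}(T_{2,3}) = S^3_{-1/(n+1)}(T_{-2,3})$ by a Rolfsen twist as integer surgery on a Legendrian two-component link in $(S^3,\xi_{\mathrm{std}})$ and enumerate stabilization patterns on its components to produce the required number of distinct rotation-number profiles, hence distinct $c_1$ classes. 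The main obstacle will be verifying that exactly $2n+1$ distinct Chern classes are achievable uniformly in $n$, and tracking how the resulting contact invariants distribute across the two $\Z/2$-gradings via \Cref{contact-grading}. An attractive alternative would be to show that the connecting map $\gamma : I^\#(S^3_{1/n}(T_{2,3})) \to I^\#(S^3_0(T_{2,3}))$ vanishes via the adjunction inequality applied to the Seifert surface of $T_{2,3}$ capped off in the $0$-framed $2$-handle on $K'$; however, that closed surface has genus $1$ and self-intersection $0$, just missing the strict inequality required by the Kronheimer--Mrowka adjunction vanishing, so this route would need to be supplemented, for example by finding a surface of higher self-intersection in the cobordism.
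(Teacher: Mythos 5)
Your upper bound is essentially the paper's: the paper also runs an induction through the surgery triple $(1/n,\,1/(n-1),\,0)$ on the trefoil (equivalently, the triangle for the dual knot that you describe), uses $\dim I^\#(S^3_0(T_{2,3}))=2$, and combines the resulting bound with the Euler characteristic constraint. The problem is your lower bound, and the gap is genuine. You propose to produce $2n+1$ Stein fillings of $-S^3_{1/(n+1)}(T_{2,3})$ with distinct Chern classes on a single $4$-manifold $W$ and invoke \Cref{thm:baldwin-sivek-independence}. This cannot work: by \Cref{contact-grading} every contact class is $\Z/2$-homogeneous in a degree determined by $\delta$, and $\delta$ is computed from $\chi(W)$, $\sigma(W)$ and $b_1$ of the boundary alone, so \emph{all} contact invariants coming from Stein structures on one fixed $W$ lie in the same $\Z/2$-grading. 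Since each graded piece of $I^\#(S^3_{1/(n+1)}(T_{2,3}))$ has dimension at most $n+1$ (by your own upper bound), at most $n+1$ such classes can be linearly independent; you can never reach $2n+1$ this way. There is also an orientation problem: the manifolds you want to fill are $-\Sigma(2,3,6m-1)$'s, which need not carry fillable (or even tight) contact structures at all --- e.g.\ $-\Sigma(2,3,5)$ has no tight contact structure --- whereas the natural Stein fillings produced by Legendrian surgery fill $+S^3_{1/m}(T_{2,3})$, with contact classes in $I^\#(-S^3_{1/m}(T_{2,3}))$, transferred to $I^\#(S^3_{1/m}(T_{2,3}))$ by the duality isomorphism. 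Your fallback via adjunction fails for the reason you already note (the capped Seifert surface has genus $1$ and square $0$).

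The missing idea, which is how the paper argues, is that you only need to bound \emph{one} graded piece by contact classes and let the Euler characteristic do the rest. Realize $S^3_{1/m}(T_{2,3})$ as $(0,-m)$-surgery on $T_{2,3}\sqcup\mu$ and take Legendrian representatives with $tb=(1,\,1-m)$: the trefoil component sits at its maximal $tb=1$ with $r=0$, and the meridional unknot is obtained from the $tb=-1$ unknot by $m-2$ stabilizations, giving $m-1$ distinct rotation numbers and hence $m-1$ Stein structures on one $W$ with distinct $c_1\in H^2(W;\R)$. These give $m-1$ linearly independent contact classes, all in the odd grading ($\delta=\tfrac12(\chi+\sigma+b_1(\partial)-1)=1$ here), so $\dim_1 I^\#(S^3_{1/m}(T_{2,3}))\geq m-1$; since $|\chi(I^\#)|=|H_1|=1$, this forces $\dim_0\geq m$ and total dimension $\geq 2m-1$, matching your inductive upper bound and pinning down the graded answer \eqref{eq:oneovern}. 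With your lower-bound step replaced by this argument, the induction closes; as written, it does not.
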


While we do not do it here, the $\Z/4$-gradings are easily computed as well.  

\begin{proof}
We start by realizing $S^3_{1/n}(T_{2,3})$ as $(0,-n)$-surgery on $S^3$ along $T_{2,3}\sqcup \mu$, where $\mu$ is a meridian of the trefoil. Since we have computed $\Isharp(S^3_{1}(T_{2,3}))$ previously, we only consider the case $n \geq 2$.  To incorporate the contact geometry of these manifolds, we first realize $S^3_{1/n}(T_{2,3})$ as several distinct Legendrian surgeries along suitable Legendrian representatives of $T_{2,3}\sqcup \mu$ with Thurston-Bennequin number $(1, -n+1)$.

\begin{figure}\xymatrix{
&{\begin{array}{c}\def\svgwidth{0.25\textwidth}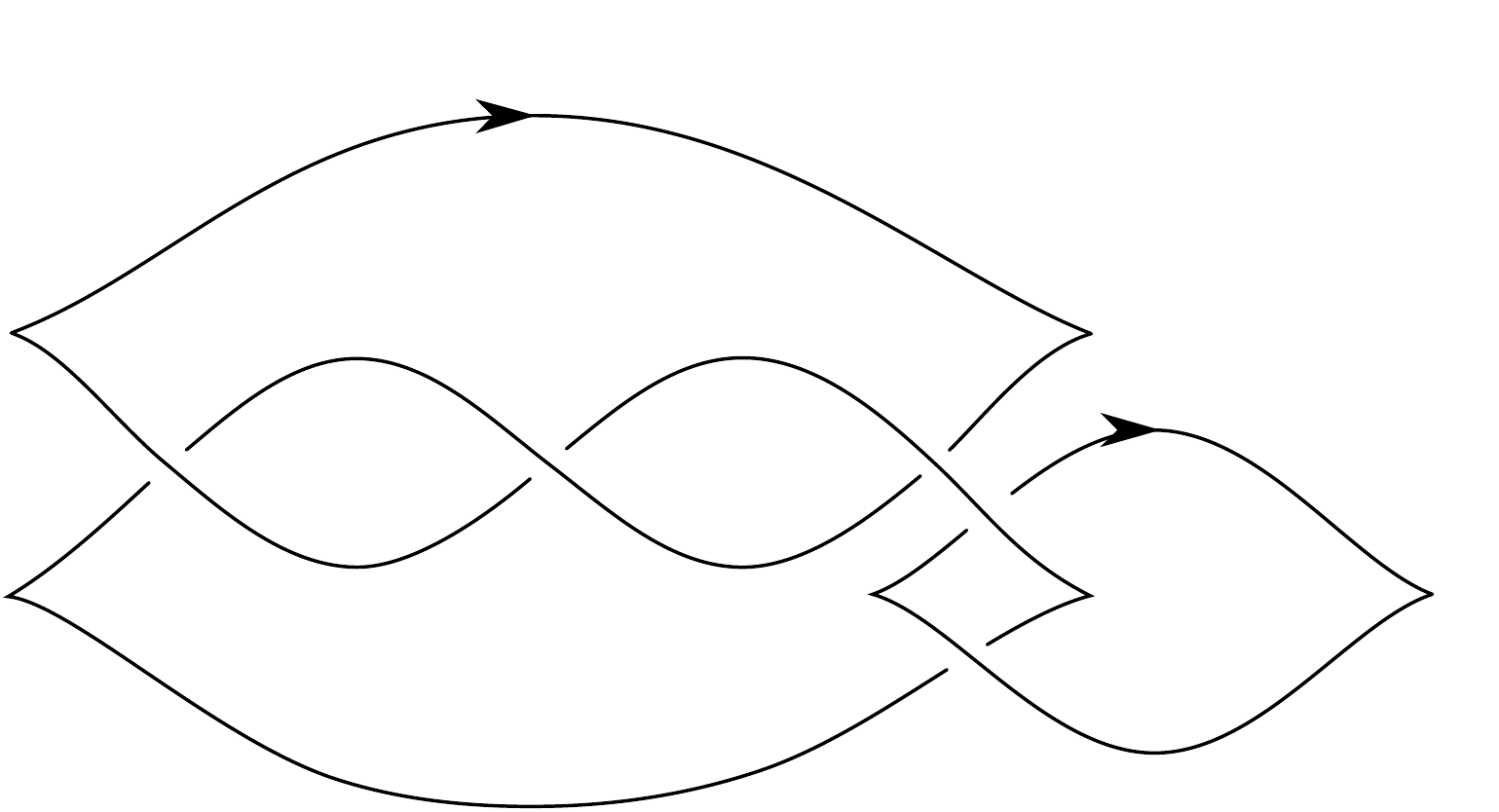\\tb=(0,-1)\\r=(0,0)\end{array}}\ar[ld]^{(-)}\ar[rd]_{(+)}&\\
{\begin{array}{c}\def\svgwidth{0.25\textwidth}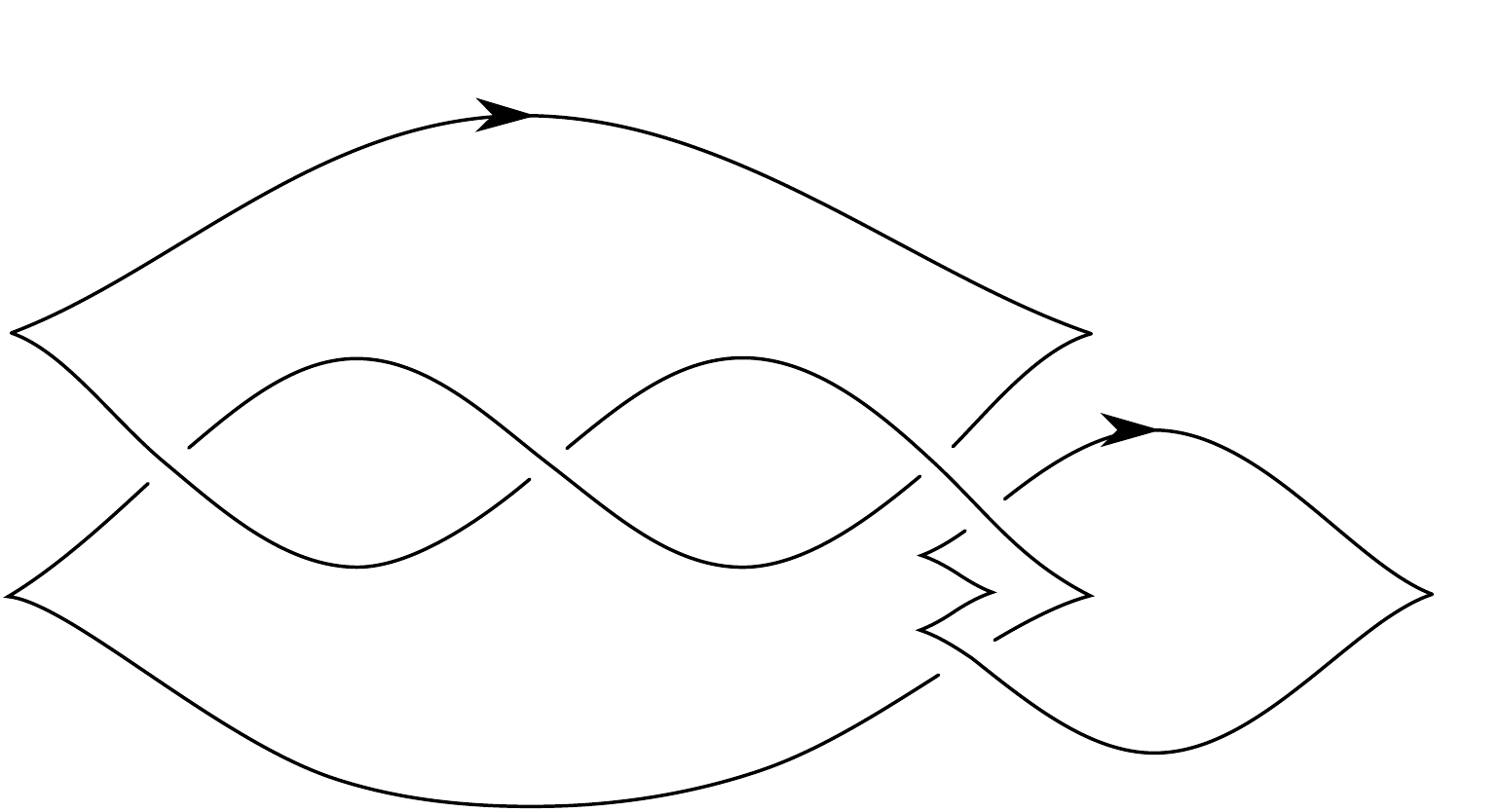\\tb=(0,-2)\\r=(0,-1)\end{array}}&&{\begin{array}{c}\def\svgwidth{0.25\textwidth}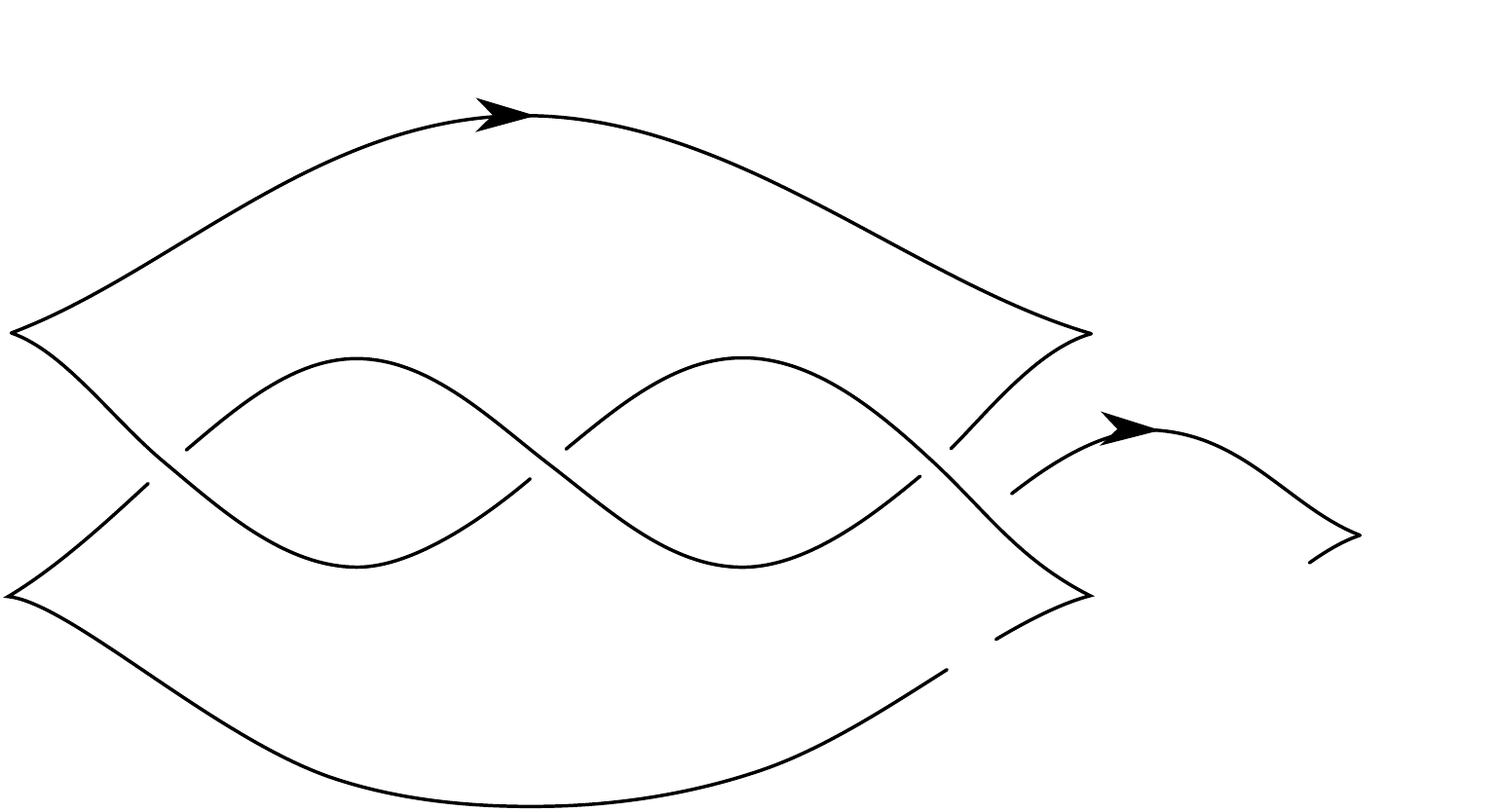\\tb=(0,-2)\\r=(0,1)\end{array}}\\
}

\caption{Legendrian surgery diagrams for $S^3_{1/3}(T_{2,3})$ using stabilizations on the unknotted component of $T_{2,3}\sqcup \mu$.}\label{mountain}
\end{figure}

We begin by establishing a lower bound on the dimension of $\Isharp(S^3_{1/n}(T_{2,3}))$.  Continuing as in \Cref{mountain}, we see that there are $n-1$ distinct rotation numbers arising from these Legendrian surgery representations.  It follows that $S^3_{1/n}(T_{2,3})$ has $n-1$ Stein fillings with distinct first Chern classes.  The grading of the contact element associated to each Stein filling is the same, since they are completely determined by cohomological invariants of $X$ and can be computed to be $$\delta=\frac{1}{2}\left(\chi(X)+\sigma(X)+b_1(\partial X)-1\right)=1.$$ 
Therefore, the dimension of $\Isharp(S^3_{1/n}(T_{2,3}))$ in odd $\Z/2$-grading is at least $n-1$.  Since the euler characteristic of the framed instanton homology of a homology sphere is 1, we have
\begin{equation}\label{eq:1nlowerbound}
\dim \Isharp(S^3_{1/n}(T_{2,3}))  \geq 2n-1.
\end{equation}  

We will now show that $2n-1$ is also an upper bound for the dimension of $\Isharp(S^3_{1/n}(T_{2,3}))$, thus obtaining \eqref{eq:oneovern}. It is well known that this agrees with the $\Z/2$-graded hat Heegaard Floer homology of these manifolds, thus completing the proof of \Cref{thm:1ntrefoil}.  (This is easily deduced from \cite[Proposition 9.6]{ozsvath-szabo-rational} or \cite[Equation (1)]{tweedy-brieskorn}, for instance.)  

The argument now proceeds by induction.  We have already computed the framed instanton homology of $+1$-surgery on the right-handed trefoil, so we move to the induction step.  Note that $1/n$-surgery on the right-handed trefoil fits into the the surgery triple $(1/n,1/(n-1),0)$.  Since the dimension of the framed instanton homology of 0-surgery on the right-handed trefoil is 2, the total dimension of $\Isharp(S^3_{1/n}(T_{2,3}))$ can only be at most 2 greater than the total dimension of that for $1/(n-1)$-surgery.  Since \eqref{eq:oneovern} holds for $1/(n-1)$ surgery by assumption, \eqref{eq:1nlowerbound} is necessarily equality, and the proof is complete.
\end{proof}

\nocite{*}
\bibliographystyle{amsplain}
\bibliography{references}

\end{document}